\theoremstyle{plain}
\newtheorem{theorem}{Theorem}[section]
\newtheorem{proposition}[theorem]{Proposition}
\newtheorem{lemma}[theorem]{Lemma}
\theoremstyle{definition}
\newtheorem{definition}[theorem]{Definition}
\newtheorem{example}[theorem]{Example}
\newtheorem{remark}[theorem]{Remark}
\newtheorem{question}[theorem]{Question}
\newtheorem{conjecture}[theorem]{Conjecture}
\theoremstyle{remark}
\numberwithin{equation}{section}
\newcommand{\Z}{\mathbb Z}
\newcommand{\C}{\mathbb C}
\newcommand{\SO}{\operatorname{SO}}
\newcommand{\op}{\operatorname}
\newcommand{\spec}{\operatorname{Spec}}
\newcommand{\diag}{\operatorname{diag}}
\newcommand{\ba}{\backslash}
\newcommand{\mult}{\operatorname{mult}}
\newcommand{\norma}[1]{\|{#1}\|_1}
\title[A computational study on lens spaces]{A computational study on lens spaces isospectral on forms}
\author{Emilio~A.~Lauret}
\address{CIEM--FaMAF (CONICET), Universidad Nacional de C\'ordoba, Medina Allende s/n, Ciudad Universitaria, 5000 C\'ordoba, Argentina.}
\email{elauret@famaf.unc.edu.ar}
\subjclass[2010]{58J53}
\keywords{lens spaces, good orbifolds, $p$-spectrum, isospectral, one-norm}
\date{March 2017}
\begin{document}

\begin{abstract}
We make a computational study to know what kind of isospectralities among lens spaces and lens orbifolds exist considering the Hodge--Laplace operators acting on smooth $p$-forms.
Several facts evidenced by the computational results are proved and some others are conjectured.
\end{abstract}

\maketitle

\tableofcontents

\section{Introduction}\label{sec:intro}

Let $M$ be a compact Riemannian manifold of dimension $d$ without boundary.
For $0\leq p\leq d$, we will denote by $\Delta_p$ the Hodge-Laplace operator acting on smooth $p$-forms of $M$.
It is well known that the spectrum of $\Delta_p$, denoted by $\spec_p(M)$, is a discrete set of non-negative real numbers, repeated according to its finite multiplicity, and tending to infinity.
If $\spec_p(M)=\spec_p(M')$, $M$ and $M'$ are said to be \emph{$p$-isospectral}.

The following question appears naturally.

\begin{question}\label{question1}
For a given subset $I$ of $\{0,\dots,d\}$.
Are there $d$-dimensional non-isometric compact Riemannian manifolds $M$ and $M'$ such that they are $p$-isospectral if and only if $p$ is in $I$?
\end{question}

The main goal of this article is to study Question~\ref{question1}, for every choice $I\subset \{0,\dots,d\}$, in the class of lens spaces, and more generally in the class of odd-dimensional lens orbifolds.

A lens space is an orientable manifold with positive constant sectional curvature and cyclic fundamental group.
Its dimension is odd and it has the form $\Gamma\ba S^{2n-1}$ with $\Gamma$ a cyclic subgroup of $\SO(2n)$ acting freely on $S^{2n-1}$.
Relaxing the free action condition of $\Gamma$ we obtain a lens orbifold.
We will always assume that the lens orbifolds are odd-dimensional.

Lens spaces has been used many times as a test case for spectral questions, since their spectra can be explicitly computed.
Ikeda used generating functions to encode the $p$-spectra of a spherical space form.
This idea was very useful to construct various isospectral examples (\cite{Ikeda80_isosp-lens}, \cite{Ikeda83}, \cite{Ikeda88}, \cite{GornetMcGowan06}) and also to prove spectral rigidity results (\cite{IkedaYamamoto79}, \cite{Yamamoto80}, \cite{Ikeda80_3-dimI}, \cite{Ikeda80_3-dimII}, \cite{Ikeda97}).
Among many other results, Ikeda showed in \cite{Ikeda88} for each $p_0\geq0$ a pair of lens spaces which are $p$-isospectral for all $p$ satisfying $0\leq p\leq p_0$, and are not $(p_0+1)$-isospectral.
Subsections~\ref{subsec:Ikeda'sapproach} and \ref{subsec:Ikeda'sexamples} give a summary on some of these results.

Gornet and McGowan~\cite{GornetMcGowan06} reactivated the use of lens spaces in spectral questions by making a computational study of $p$-isospectral lens spaces (see Subsection~\ref{subsec:GornetMcGowan} for details).
They used Ikeda's generating functions to check whether two lens spaces are $p$-isospectral.
Similarly, Shams Ul Bari~\cite{Shams11} found several examples of $0$-isospectral lens orbifolds. 
Also following Ikeda's approach, the Dirac operator was also considered on spin lens spaces in \cite{Bar91thesis} and \cite{Boldt15}, and in spherical space forms in \cite{Bar96}.
Very recently, Bari and Hunsicker~\cite{ShamsHunsicker17} proved the non-existence for non-isometric $0$-isospectral lens orbifolds in dimension $3$ and $4$.

By using standard representation theory on compact groups, Miatello, Rossetti and the author~\cite{LMR-onenorm} relate the $p$-spectrum of a lens space with the number of vectors of fixed one-norm in certain associated sublattice of $\Z^n$.
This was used to show the first examples of compact Riemannian manifolds that are $p$-isospectral for all $p$ but are not strongly isospectral.
The articles \cite{LMR-survey}, \cite{BoldtLauret-onenormDirac}, \cite{Lauret-spec0cyclic} and \cite{Lauret-multip} follow this approach and \cite{DeFordDoyle14} study in detail the examples of all-$p$-isospectral pairs in \cite{LMR-onenorm}.
The articles \cite{MohadesHonori16}, \cite{MohadesHonori16b} are also related to this approach.

In order to explain in detail the main goals of this article, we introduce some useful notation.

\begin{definition}\label{notation:I-isospectral}
For $I$ a subset of $\{0,\dots,d\}$, we will say that a family (two or more) of $d$-dimensional manifolds are \emph{$I$-isospectral} if they are mutually $p$-isospectral for all $p\in I$, and for any $p\not\in I$ there are at least two elements in the family that are not $p$-isospectral.
\end{definition}

In other words, $I$ is the largest subset of $\{0,\dots,d\}$ such that any two elements in the family are $p$-isospectral for all $p$ in it.
In particular, any subfamily of an $I$-isospectral family will be $I'$-isospectral for some $I'\supseteq I$.

For an orientable $d$-dimensional compact Riemannian manifold $M$, $\spec_p(M)=\spec_{d-p}(M)$ for all $p$.
Hence, when the underlying manifolds are orientable, we abbreviate $I$-isospectral for some $I\subset \{0,1,\dots, \lfloor \frac{d}{2} \rfloor\}$ to $I'$-isospectral when $I'=I\cup \{d-p:p\in I\}$.
Lens orbifolds are orientable of odd dimension, say $d=2n-1$, thus we will be considering subsets of $\{0,\dots,n-1\}$.

The aim of this article is to make a computational study of $I$-isospectral lens orbifolds.
The appendix \cite{Lauret-appendix} includes, for low values of $n$ and $q$, all families of $(2n-1)$-dimensional $I$-isospectral lens orbifolds with fundamental group of order $q$.
Section~\ref{sec:data} includes summaries of these computational results, by showing the subsets $I$ of $\{0,\dots,n-1\}$ for which there exists an $I$-isospectral family.
Section~\ref{sec:evidencedfacts} proves several facts evidenced from the data by using the tools introduced in Section~\ref{sec:preliminaries}.
All computations were made by using \cite{Sage}.

We end this section by listing the most interesting conclusions.

\begin{enumerate}
\item The most common obstruction to the existence of $I$-isospectral families is the `hole obstruction' in Proposition~\ref{prop:hole-obstruction}. If two spherical orbifolds (e.g.\ lens orbifolds) are $(p-1)$-isospectral and $(p+1)$-isospectral, then they also are $p$-isospectral. In other words, the subset $I$ cannot contain a `simple hole'.

\item It is well known that a lens space cannot be $0$-isospectral to a lens orbifold with singularities since they share a common Riemannian cover (see Subsection~\ref{subsec:Orbifoldsmanifolds}).
    The computational results give strong evidences that a lens space cannot be $p$-isospectral to a lens orbifold with singularities for any $p$ (see Conjecture~\ref{conj:orbifold/manifold}).

\item We prove the non-existence of a pair of $I$-isospectral lens spaces for some choices of $I$.
    Namely, when $I\subset \{0,\dots,n-1\}$ has $n-1$ elements (see Theorem~\ref{thm:[0,n-2]-isosp}), and when $I$ has $n-2$ elements and it is different to $\{0,\dots,n-3\}$ (see Theorem~\ref{thm:n-2elements}).

\item Gornet and McGowan in \cite{GornetMcGowan06} were interested in the existence of pairs of $p$-isospectral lens spaces for some $p>0$, which are not $0$-isospectral.
    We found several such examples from dimension $5$ on among lens orbifolds.
    We also found such examples among lens spaces from dimension $11$ on (see Remark~\ref{rem:p-isospp>0}).

\item There is evidence that the isotropy type of the singular points of $0$-isospectral lens orbifolds coincide (see Remark~\ref{rem:L(q/d;s)orbifolds}).
    Furthermore, one can see that most $I$-isospectral families with $I$ non-empty and $0\not\in I$ have lens orbifolds with singular points of different isotropy types (see Remark~\ref{rem:isotropytype}).

\item If two lens spaces are $0$-isospectral, then the pair of corresponding covering spaces of the same degree are also $0$-isospectral (see Subsection~\ref{subsec:increasing-q}).
    This is not true for $p>0$.
\end{enumerate}

\section{Preliminaries}\label{sec:preliminaries}

In this section we will introduce several concepts and results on lens orbifolds and their $p$-spectra.
It is based on the references \cite{IkedaTaniguchi78}, \cite{IkedaYamamoto79}, \cite{Ikeda80_isosp-lens}, \cite{Ikeda88}, \cite{LMR-repequiv}, \cite{LMR-onenorm}, \cite{Lauret-spec0cyclic}, \cite{Lauret-multip}.
These preliminaries will be used in Section~\ref{sec:evidencedfacts} to prove several facts evidenced by the computational results.
The author suggests the reader already familiar with lens spaces and their $p$-spectra to skip this section and come back to it for reference.

\subsection{$p$-Spectra of spherical orbifolds}

We assume that $M$ is a \emph{good orbifold} covered by $S^{2n-1}$, that is, $M=\Gamma\ba S^{2n-1}$ with $\Gamma$ a finite subgroup of $\SO(2n)$.
We will use the term \emph{spherical orbifold} for a space as above.
The space $\Gamma\ba S^{2n-1}$ is a manifold if and only if $\Gamma$ acts freely on $S^{2n-1}$.
In this case, $\Gamma\ba S^{2n-1}$ is usually called a \emph{spherical space form}.

For $\Gamma\subset G$ finite, let us denote by $\Delta_{\Gamma,p}$ the Hodge-Laplace operator on $p$-forms of $\Gamma\ba S^{2n-1}$, which is given by the restriction of $\Delta_p$ on $S^{2n-1}$ to $\Gamma$-invariant smooth $p$-forms on $S^{2n-1}$.
Two spherical orbifolds $\Gamma\ba S^{2n-1}$ and $\Gamma'\ba S^{2n-1}$ are said to be \emph{$p$-isospectral} if the operators $\Delta_{\Gamma,p}$ and $\Delta_{\Gamma',p}$ have the same spectra.
The space $\Gamma\ba S^{2n-1}$ is orientable, thus $\spec_p(\Gamma\ba S^{2n-1}) = \spec_{2n-1-p}(\Gamma\ba S^{2n-1})$.
In particular, $p$-isospectrality for every $0\leq p\leq n-1$ is actually equivalent to $p$-isospectrality for every $p$.

The next theorem is well known (see for instance \cite[Thm.~4.2]{IkedaTaniguchi78}, \cite[Prop.~2.1]{Ikeda88}, \cite[Thm.~1.1]{LMR-repequiv}, \cite[Prop.~2.2]{LMR-onenorm}).
It describes the $p$-spectrum of $\Gamma\ba S^{2n-1}$ in terms of the dimension of the $\Gamma$-invariant subspaces of certain irreducible representations of $\SO(2n)$.
For $\pi$ a representation of $\SO(2n)$, let $V_\pi$ denote its underlying vector space and $V_{\pi}^\Gamma$ the $\Gamma$-invariants in $V_{\pi}$.
Following the same notation as in \cite{Lauret-multip}, let $\pi_{k,p}$ be the irreducible representation of $\SO(2n)$ with highest weight $k\varepsilon_1+\sum_{j=1}^p \varepsilon_j = (k+1)\varepsilon_1+\sum_{j=2}^p\varepsilon_j$.
For $k\geq 1$, we set
\begin{equation}\label{eq1:lambda_(k,p)}
\lambda_{k,p} =
\begin{cases}
0&\quad\text{if $p=-1$},\\
(k+p)(k+2n-2-p) &\quad\text{if $0\leq p\leq n-1$.}
\end{cases}
\end{equation}

\begin{theorem}\label{thm:spectrum-general}
Fix $0\leq p\leq n-1$.
Each eigenvalue in $\spec_p(\Gamma\ba S^{2n-1})$ is of the form  $\lambda_{k,p-1}$ or $\lambda_{k,p}$ for some $k\geq 1$, with multiplicity
\begin{equation*}
\mult_{\Delta_{\Gamma,p}}(\lambda_{k,p-1}) = \dim V_{\pi_{k-1,p}}^\Gamma
\quad\text{ and }\quad
\mult_{\Delta_{\Gamma,p}}(\lambda_{k,p}) = \dim V_{\pi_{k-1,p+1}}^\Gamma
\end{equation*}
respectively.
\end{theorem}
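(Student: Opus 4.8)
The plan is to realize $\spec_p(\Gamma\ba S^{2n-1})$ as the spectrum of the Hodge--Laplace operator $\Delta_p$ on $S^{2n-1}$ restricted to the $\Gamma$-invariant $p$-forms, and then to decompose the latter using the $\SO(2n)$-module structure of the eigenspaces of $\Delta_p$ on the round sphere. Since $\Gamma$ acts by isometries it commutes with $\Delta_p$, so each eigenspace $E_\lambda^p\subset\Omega^p(S^{2n-1})$ is a finite-dimensional $\SO(2n)$-module, and the $\Gamma$-invariant $p$-forms are exactly $\bigoplus_\lambda (E_\lambda^p)^\Gamma$. As these descend to the $p$-forms of the good orbifold $\Gamma\ba S^{2n-1}$ and $\Delta_{\Gamma,p}$ is the induced operator, we get $\mult_{\Delta_{\Gamma,p}}(\lambda)=\dim (E_\lambda^p)^\Gamma$ for every $\lambda$. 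Everything therefore reduces to describing each $E_\lambda^p$ as an $\SO(2n)$-representation.

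Next I would invoke the Hodge decomposition on the odd-dimensional sphere. For $0<p<2n-1$ there are no nonzero harmonic $p$-forms, so $\Omega^p(S^{2n-1})=d\,\Omega^{p-1}\oplus\delta\,\Omega^{p+1}$, and since $d,\delta$ are $\SO(2n)$-equivariant and commute with $\Delta$, the exterior derivative $d$ restricts to an $\SO(2n)$-equivariant isomorphism from the coexact $(p-1)$-eigenforms of eigenvalue $\lambda$ onto the exact $p$-eigenforms of the same eigenvalue. Hence, as $\SO(2n)$-modules, each eigenspace of $\Delta_p$ splits as a coexact part together with a copy of a coexact $(p-1)$-eigenspace, so it suffices to identify the coexact eigenspaces.

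The core step is to identify, for each degree $p$ and each $k\geq 1$, the coexact $p$-eigenspace of eigenvalue $\lambda_{k,p}$ with the irreducible representation $\pi_{k-1,p+1}$. Concretely, $\Omega^p(S^{2n-1})$ is the space of sections of the homogeneous bundle over $S^{2n-1}=\SO(2n)/\SO(2n-1)$ associated to the isotropy representation $\Lambda^p(\C^{2n-1})$ of $\SO(2n-1)$; by the Peter--Weyl theorem and Frobenius reciprocity the $\SO(2n)$-irreducible $\pi$ occurs in $\Omega^p$ with multiplicity $\dim\Hom_{\SO(2n-1)}(V_\pi,\Lambda^p\C^{2n-1})$, which one evaluates via the branching rule $\SO(2n)\downarrow\SO(2n-1)$. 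Matching this with the splitting of the previous paragraph pins down the highest weights $(k+1)\varepsilon_1+\sum_{j=2}^{p+1}\varepsilon_j$, and a Casimir (Freudenthal) computation $\langle\mu,\mu+2\rho\rangle$ on each isotypic component confirms the eigenvalue $\lambda_{k,p}=(k+p)(k+2n-2-p)$. The eigenvalue $\lambda_{k,p-1}$ then appears in $\spec_p$ as the exact contribution, namely $d$ of the coexact $(p-1)$-eigenspace $\pi_{k-1,p}$, and $\lambda_{k,p}$ as the coexact contribution $\pi_{k-1,p+1}$; taking $\Gamma$-invariants yields the stated multiplicities $\dim V_{\pi_{k-1,p}}^\Gamma$ and $\dim V_{\pi_{k-1,p+1}}^\Gamma$.

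The main obstacle I expect is precisely this identification of the coexact eigenspaces as concrete irreducibles: one must combine the branching rule with the Hodge-theoretic splitting to separate exact from coexact contributions at each eigenvalue, and carefully handle the boundary degrees $p=0$ and $p=n-1$ (with the convention $\lambda_{k,-1}=0$ and $\pi_{k-1,0}$ absent). The geometric input — that eigenspaces of $\Delta_p$ on the sphere act by the Casimir and are governed by $\SO(2n)$-branching — is classical, so the remaining work is bookkeeping of highest weights and eigenvalues rather than any new analytic estimate.
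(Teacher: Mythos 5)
Your outline is correct and coincides with the approach behind the paper's Theorem~\ref{thm:spectrum-general}: the paper gives no proof of its own, citing the result as well known from \cite[Thm.~4.2]{IkedaTaniguchi78}, \cite[Prop.~2.1]{Ikeda88}, \cite[Thm.~1.1]{LMR-repequiv} and \cite[Prop.~2.2]{LMR-onenorm}, and those proofs proceed exactly as you propose — reduce $\spec_p(\Gamma\ba S^{2n-1})$ to $\Gamma$-invariants in the eigenspaces on $S^{2n-1}$, apply Peter--Weyl and Frobenius reciprocity with the branching rule $\SO(2n)\downarrow\SO(2n-1)$ for $\bigwedge^p(\C^{2n-1})$, split each eigenspace into exact and coexact parts via the equivariant isomorphism given by $d$ (using the vanishing of harmonic $p$-forms for $0<p<2n-1$), and identify eigenvalues by the Casimir, giving the strings $\pi_{k-1,p}$ (exact, eigenvalue $\lambda_{k,p-1}$) and $\pi_{k-1,p+1}$ (coexact, eigenvalue $\lambda_{k,p}$). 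The only delicate point, which you correctly flag, is the bookkeeping at the boundary degrees $p=0$ (where the exact string degenerates under the convention $\lambda_{k,-1}=0$) and $p=n-1$; this is handled in the cited references and requires no new idea.
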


One can see from Theorem~\ref{thm:spectrum-general} that the $p$-spectrum of a spherical orbifold consists of two `strings' of multiplicities, namely $\{\mult_{\Delta_{\Gamma,p}}(\lambda_{k,p-1})\}_{k\geq1}$ and $\{\mult_{\Delta_{\Gamma,p}}(\lambda_{k,p})\}_{k\geq1}$.
Consequently, the $p$-spectrum and the $(p+1)$-spectrum of a spherical orbifold share one string.
This remark yields a very important obstruction to isospectrality (see for instance \cite[Cor.~1.2(ii)]{LMR-repequiv} or \cite[Prop.~2.1]{Ikeda88}).

\begin{proposition}\label{prop:hole-obstruction}
If two $(2n-1)$-dimensional spherical orbifolds are $(p-1)$-isospectral and $(p+1)$-isospectral, then they are $p$-isospectral.
In particular, $1$-isospectrality implies $0$-isospectrality.
\end{proposition}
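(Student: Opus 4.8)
The plan is to reduce $p$-isospectrality to the two `strings' of multiplicities provided by Theorem~\ref{thm:spectrum-general} and then to observe that these strings can be read off a single spectrum \emph{individually}. For $1\le q\le n-1$ and $k\ge 1$ write $S_q^\Gamma(k)=\dim V_{\pi_{k-1,q}}^\Gamma$; then by Theorem~\ref{thm:spectrum-general} the spectrum $\spec_q(\Gamma\ba S^{2n-1})$ is the union of the eigenvalues $\lambda_{k,q-1}$ with multiplicity $S_q^\Gamma(k)$ and the eigenvalues $\lambda_{k,q}$ with multiplicity $S_{q+1}^\Gamma(k)$, for $k\ge1$. Thus $\spec_q$ is encoded by the pair $(S_q^\Gamma,S_{q+1}^\Gamma)$, $\spec_{q+1}$ by $(S_{q+1}^\Gamma,S_{q+2}^\Gamma)$, and two consecutive spectra share precisely the string $S_{q+1}^\Gamma$.

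The crucial step, which I would carry out first, is that the two eigenvalue sequences occurring in one spectrum are disjoint, so that the two strings can be separated. Each of $\{\lambda_{k,q-1}\}_{k\ge1}$ and $\{\lambda_{k,q}\}_{k\ge1}$ is strictly increasing in $k$ by \eqref{eq1:lambda_(k,p)}, hence injective. For disjointness, put $a=k+q-1$, $b=k'+q$ and $M=2(n-q)\ge 2$, so that $\lambda_{k,q-1}=a(a+M)$ and $\lambda_{k',q}=b(b+M-2)$; the equality $a(a+M)=b(b+M-2)$ rearranges to $(a-b)(a+b+M)+2b=0$. If $a\ge b$ this is a sum of a nonnegative and a strictly positive term; if $a<b$, setting $t=b-a\ge1$ turns it into $2a(1-t)=t(t+M-2)$, whose left side is nonpositive and whose right side is strictly positive. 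Either way there is no solution with $a,b\ge1$. Consequently, for $1\le q\le n-1$, $q$-isospectrality of $\Gamma\ba S^{2n-1}$ and $\Gamma'\ba S^{2n-1}$ is \emph{equivalent} to $S_q^\Gamma=S_q^{\Gamma'}$ together with $S_{q+1}^\Gamma=S_{q+1}^{\Gamma'}$. The case $q=0$ is degenerate, since all values $\lambda_{k,-1}$ equal $0$; but the highest weights coincide, $\pi_{k,0}=\pi_{k-1,1}$, so the nonzero part of $\spec_0$ consists of the distinct eigenvalues $\lambda_{k,0}=k(k+2n-2)$ with multiplicity $\dim V_{\pi_{k,0}}^\Gamma=S_1^\Gamma(k)$, the zero eigenvalue contributing multiplicity $1$ on a connected orbifold. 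Hence $0$-isospectrality is equivalent to the single identity $S_1^\Gamma=S_1^{\Gamma'}$.

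With these equivalences the conclusion is pure bookkeeping. For $1\le p\le n-2$, $(p-1)$-isospectrality forces $S_p^\Gamma=S_p^{\Gamma'}$ (one of the two strings of $\spec_{p-1}$, equal to $S_1$ when $p=1$) and $(p+1)$-isospectrality forces $S_{p+1}^\Gamma=S_{p+1}^{\Gamma'}$ (one of the two strings of $\spec_{p+1}$); since $\spec_p$ is governed exactly by $S_p$ and $S_{p+1}$, the two orbifolds are $p$-isospectral. The endpoint $p=n-1$ is immediate from the orientation duality $\spec_n=\spec_{n-1}$, which already contains the conclusion in the hypothesis. The special assertion that $1$-isospectrality implies $0$-isospectrality is just the implication $S_1^\Gamma=S_1^{\Gamma'}$ derived from $1$-isospectrality, since $S_1$ is one of the two strings of $\spec_1$. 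I expect the disjointness of the eigenvalue sequences to be the only genuine step; everything else is a direct reading of Theorem~\ref{thm:spectrum-general}.
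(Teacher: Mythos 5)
Your proof is correct and follows essentially the same route as the paper, which deduces the proposition from the two-string decomposition of $\spec_p$ given by Theorem~\ref{thm:spectrum-general} (delegating the details to \cite[Cor.~1.2(ii)]{LMR-repequiv} and \cite[Prop.~2.1]{Ikeda88} rather than writing them out). The one step the paper leaves implicit --- that the eigenvalue sequences $\{\lambda_{k,p-1}\}_{k\ge1}$ and $\{\lambda_{k,p}\}_{k\ge1}$ are disjoint, so each string can be read off a single spectrum individually --- is exactly what you verify, and your disjointness computation, together with your handling of the degenerate cases $p=0$ and $p=n-1$, is correct.
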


\subsection{Lens spaces}\label{subsec:lensspaces}
Let $q$ be a positive integer and let $s=(s_1,\dots,s_n)\in\Z^n$ satisfying that $\gcd(q,s_1,\dots,s_n)=1$.
The cyclic group $\Gamma_{q,s}$ generated by
\begin{equation*}\label{eq:gamma}
\gamma_{q,s}:=
\diag\left(
\left[\begin{smallmatrix}\cos(2\pi{s_1}/q)&-\sin(2\pi{s_1}/q) \\ \sin(2\pi{s_1}/q)&\cos(2\pi{s_1}/q)
\end{smallmatrix}\right]
,\dots,
\left[\begin{smallmatrix}\cos(2\pi{s_m}/q)&-\sin(2\pi{s_m}/q) \\ \sin(2\pi{s_m}/q)&\cos(2\pi{s_m}/q)
\end{smallmatrix}\right]
\right)
\end{equation*}
has order $q$ and the space
\begin{equation*}
L(q;s)=L(q;s_1,\dots,s_m):=\Gamma_{q,s}\ba S^{2n-1},
\end{equation*}
is a \emph{lens orbifold}.
We will always consider the Riemannian structure induced by the round metric on $S^{2n-1}$.
The group $\Gamma_{q,s}$ acts freely on $S^{2n-1}$ if and only if $\gcd(q,s_j)=1$ for all $j$.
In this case, the Riemannian manifold $\Gamma_{q,s}\ba S^{2n-1}$ is a \emph{lens space}.
The following fact is well known (see for instance \cite[Ch.~V]{Cohen-book}).

\begin{proposition}\label{prop3:lens-isom}
Let $L=L(q;s)$ and $L'=L(q;s')$ be orbifold lens spaces.
Then, $L$ and $L'$ are isometric if and only if there exist a permutation $\sigma$ of $\{1,\dots,n\}$, $\epsilon_1,\dots,\epsilon_n\in\{\pm1\}$ and $t\in\Z$ coprime to $q$, such that
  $$
  s_{\sigma(j)}'\equiv t\epsilon_js_j\pmod{q}
  $$
  for all $1\leq j\leq n$.
\end{proposition}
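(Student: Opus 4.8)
The plan is to pass from the orbifolds to their deck groups and reduce the statement to a conjugacy question inside $\Ot(2n)$. First I would record the standard principle that two good spherical orbifolds $\Gamma\ba S^{2n-1}$ and $\Gamma'\ba S^{2n-1}$ are isometric if and only if $\Gamma$ and $\Gamma'$ are conjugate subgroups of the isometry group $\Ot(2n)$ of $S^{2n-1}$: any isometry of the quotients lifts to an isometry $g$ of the common (orbifold) universal cover $S^{2n-1}$ satisfying $g\Gamma g^{-1}=\Gamma'$, and conversely such a $g$ descends to an isometry of the quotients. Thus it suffices to determine when $\Gamma_{q,s}$ and $\Gamma_{q,s'}$ are conjugate in $\Ot(2n)$.

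Next I would exploit that both groups are cyclic of order $q$. Since a conjugation carries a generator to a generator, $\Gamma_{q,s}$ and $\Gamma_{q,s'}$ are conjugate if and only if $\gamma_{q,s}$ is conjugate in $\Ot(2n)$ to some generator of $\Gamma_{q,s'}$. The generators of $\Gamma_{q,s'}$ are exactly the powers $\gamma_{q,s'}^t$ with $\gcd(t,q)=1$, and a direct block computation gives $\gamma_{q,s'}^t=\gamma_{q,ts'}$, where $ts'=(ts_1',\dots,ts_n')$. Hence conjugacy of the two groups is equivalent to the existence of $t$ coprime to $q$ such that $\gamma_{q,s}$ and $\gamma_{q,ts'}$ are conjugate in $\Ot(2n)$.

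The heart of the argument is then a normal-form statement for these block-rotation matrices: two elements of $\Ot(2n)$ of this diagonal form are conjugate in $\Ot(2n)$ if and only if they have the same multiset of complex eigenvalues. The eigenvalues of $\gamma_{q,s}$ are $e^{\pm 2\pi\mi s_j/q}$ ($1\le j\le n$) and those of $\gamma_{q,ts'}$ are $e^{\pm 2\pi\mi ts_j'/q}$, so equality of the eigenvalue multisets amounts to equality of the multisets of unordered pairs $\{s_j,-s_j\}$ and $\{ts_j',-ts_j'\}$ in $\Z/q$. This is precisely the existence of a permutation $\sigma$ and signs $\epsilon_j\in\{\pm1\}$ with $s_j\equiv \epsilon_j t s_{\sigma(j)}'\pmod q$ for all $j$; multiplying by the inverse of $t$ modulo $q$ and relabelling puts this in the stated form $s_{\sigma(j)}'\equiv t\epsilon_j s_j\pmod q$. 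For the converse direction I would make the conjugating element explicit: the permutation $\sigma$ is realized by the orthogonal matrix permuting the $2\times2$ blocks, each sign $\epsilon_j=-1$ is realized by a reflection $\diag(1,-1)$ in the corresponding block (which sends a rotation by $\theta$ to a rotation by $-\theta$), and replacing $s'$ by $ts'$ only changes the chosen generator of $\Gamma_{q,s'}$, not the group itself.

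I expect the main obstacle to be the eigenvalue step, and in particular checking that equality of the full $2n$-element eigenvalue multiset really does descend to a matching of the conjugate pairs $\{s_j,-s_j\}$ block by block, including the degenerate cases $s_j\equiv 0$ or $2s_j\equiv 0\pmod q$ in which the block contributes a repeated real eigenvalue $\pm1$; here one uses that the eigenvalues of a single block are forced to occur as a pair $\{c,-c\}$, which pins down the block decomposition up to the permutation and signs above. The only other point requiring care is the orbifold lifting statement in the first step, which I would either quote from standard orbifold covering theory or, when $\Gamma_{q,s}$ acts freely, reduce to the classical manifold case.
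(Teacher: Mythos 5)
Your proof is correct, and it takes a different route from the paper in the trivial sense that the paper contains no proof at all: Proposition~\ref{prop3:lens-isom} is recorded there as a well-known fact with a pointer to \cite[Ch.~V]{Cohen-book}, where the corresponding classification of (manifold) lens spaces is obtained by topological methods, the isometry statement for orbifold quotients being folklore. Your argument is the standard direct geometric proof, and it is in fact better adapted to the orbifold setting of the proposition than the cited reference: you reduce isometry of $\Gamma_{q,s}\ba S^{2n-1}$ and $\Gamma_{q,s'}\ba S^{2n-1}$ to conjugacy of the cyclic groups in $\Iso(S^{2n-1})=\Ot(2n)$, absorb the choice of generator into $\gamma_{q,s'}^t=\gamma_{q,ts'}$ with $\gcd(t,q)=1$, and invoke the real normal form for orthogonal matrices, under which $\Ot(2n)$-conjugacy is detected by the eigenvalue multiset. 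Your treatment of the delicate point is the right one: the $2n$-element eigenvalue multiset does determine the multiset of unordered conjugate pairs $\{e^{2\pi\mi s_j/q},e^{-2\pi\mi s_j/q}\}$, since for $\lambda\neq\pm1$ the number of blocks of type $\{\lambda,\bar\lambda\}$ equals the multiplicity of $\lambda$, while for $\lambda=\pm1$ it equals half that multiplicity; this yields the permutation $\sigma$ and signs $\epsilon_j$, and replacing $t$ by its inverse mod $q$ gives exactly the stated congruences. The converse via explicit block permutations and $\diag(1,-1)$ reflections is also correct, and legitimately uses all of $\Ot(2n)$ since orientation-reversing isometries are allowed. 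The one step you quote rather than prove --- that an isometry of the quotients lifts to $g\in\Ot(2n)$ with $g\Gamma g^{-1}=\Gamma'$ even when the actions are not free --- is indeed the only point needing care; it is standard for good orbifolds (for instance, a metric isometry preserves the isotropy stratification because the volume of a small $r$-ball at a point with isotropy of order $m$ is $1/m$ times the manifold value, after which one lifts over the regular parts or applies orbifold covering theory), and the paper offers no more detail on this than you do. In short: the paper buys brevity by citation; your proof buys a self-contained argument that genuinely covers the orbifold case the citation does not literally address.
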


\begin{remark}\label{rem:lensinC^n}
An equivalent and useful way to view the lens space $L(q;s)$ is as the quotient of $S^{2n-1}=\{(z_1,\dots,z_n)\in\C^n:|z_1|^2+\dots+|z_n|^2\}\subset \C^n$ by the action of the cyclic group of $q$-roots of unity given by
\begin{equation*}
  \xi\cdot (z_1,\dots,z_n)=(\xi^{s_1}z_1,\dots,\xi^{s_n}z_n).
\end{equation*}
\end{remark}

The isotropy group of a point $x$ in a lens orbifold $L(q;s)=S^{2n-1}/\Gamma_{q,s}$, which in particular is a good orbifold, is given by the elements in $\Gamma_{q,s}$ fixing $x$.
Two points share the isotropy type if their isotropy groups are isomorphic.
The connected components of the equivalent isotropy classes of the points in $L(q;s)$ form a stratification of $L(q;s)$.
The points with a non-trivial isotropy group are called \emph{singular}, and otherwise \emph{regular}.
Since $L(q;s)$ is connected, the subset of its regular points form a single stratum, the only stratum of full dimension.
More details on orbifolds and their spectra can be found in \cite{Gordon12-orbifold}.

The isotropy classes in a lens orbifold $L(q;s)$ is determined by the multiset (set with multiplicities)
\begin{equation}\label{eq:multisetGCD}
\{\!\{\gcd(q,s_j):1\leq j\leq n\}\!\}.
\end{equation}
For example, $L(q;s)$ is a lens space if and only if \eqref{eq:multisetGCD} is equal to $\{\!\{1^n\}\!\}$ (the multiset given by $1$ repeated $n$ times).
The lens orbifold $L(4;1,0)$ contains exactly one point with non-trivial isotropy group.
Indeed, the action of a $q$-root of unity $\xi$ on a point $(z_1,z_2)$ in $S^3$ is
$$
\xi\cdot(z_1,z_2)= (\xi z_1,z_2),
$$
thus the class of the point $(0,1)$ in $L(4;1,0)$ has singular isotropy $\Gamma_{4,(1,0)}$, while $\Gamma_{4,(1,0)}$ acts freely on any other point in $L(4;1,0)$.

A more involved example is $L:=L(4;0,1,2)$, which has the three different isotropy types.
Indeed, the class in $L$ of the point $(1,0,0)$ in $S^5$ has isotropy group equal to $\Gamma_{4,(0,1,2)}$, and the class in $L$ of the points $(a,0,b)$ in $S^5$ satisfying $a^2+b^2=1$ and $b\neq 0$ have isotropy group of order $2$.
The rest of the points are regular.

\subsection{Ikeda's approach}\label{subsec:Ikeda'sapproach}
A.~Ikeda made important progress in inverse spectral geometry of spherical space forms, particularly in the study of lens spaces.
His main tool were the (two) generating functions associated to the $p$-spectrum of a lens space given.
He defined (see \cite[(2.3)]{Ikeda88})
\begin{equation}\label{eq1:F_Gamma^p}
F_\Gamma^{p}(z) =
\sum_{k\geq0}  \mult_{\Delta_{\Gamma,p}} (\lambda_{k+1,p}) z^{k}
=\sum_{k\geq0} \dim V_{\pi_{k,p+1}}^\Gamma z^k,
\end{equation}
for any $0\leq p\leq n-1$ and any finite subgroup $\Gamma$ of $\SO(2n)$.
By Theorem~\ref{thm:spectrum-general}, $F_\Gamma^{p-1}(z)$ and $F_\Gamma^{p}(z)$ encode the $p$-spectrum of $\Gamma\ba S^{2n-1}$, and each generating function corresponds to each string.
From Theorem~\ref{thm:spectrum-general}, one immediately obtains the following characterization of $p$-isospectral spherical orbifolds.

\begin{proposition}\label{prop:p-isospIkeda}
Two spherical orbifolds $\Gamma\ba S^{2n-1}$ and $\Gamma'\ba S^{2n-1}$ are $p$-isospectral if and only if
$F_{\Gamma}^{p-1}(z)=F_{\Gamma'}^{p-1}(z)$ and $F_{\Gamma}^{p}(z)=F_{\Gamma'}^{p}(z).
$
\end{proposition}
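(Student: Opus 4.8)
The plan is to read the statement off Theorem~\ref{thm:spectrum-general}, which exhibits $\spec_p(\Gamma\ba S^{2n-1})$ as the superposition of two \emph{strings} of eigenvalues: one carries the values $\lambda_{k,p-1}$ with multiplicities $\dim V_{\pi_{k-1,p}}^\Gamma$, the other the values $\lambda_{k,p}$ with multiplicities $\dim V_{\pi_{k-1,p+1}}^\Gamma$, for $k\geq1$. By the very definition \eqref{eq1:F_Gamma^p}, after the reindexing $k-1\mapsto k$ these two multiplicity sequences are exactly the coefficient sequences of $F_\Gamma^{p-1}(z)$ and of $F_\Gamma^{p}(z)$. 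The easy direction $(\Leftarrow)$ is then immediate: if $F_\Gamma^{p-1}=F_{\Gamma'}^{p-1}$ and $F_\Gamma^{p}=F_{\Gamma'}^{p}$, the two orbifolds assign the same multiplicity to every candidate eigenvalue $\lambda_{k,p-1}$ and $\lambda_{k,p}$, so Theorem~\ref{thm:spectrum-general} gives $\spec_p(\Gamma\ba S^{2n-1})=\spec_p(\Gamma'\ba S^{2n-1})$.

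For the converse I would show that the multiset $\spec_p(\Gamma\ba S^{2n-1})$ determines each of the two strings separately, so that both generating functions can be recovered from it. First, within a single string the values $\lambda_{k,q}=(k+q)(k+2n-2-q)$ from \eqref{eq1:lambda_(k,p)} are strictly increasing in $k$, since $\lambda_{k+1,q}-\lambda_{k,q}=2k+2n-1>0$; hence distinct $k$ produce distinct values and no internal collision occurs. Second — and this is the step I expect to be the crux — I would check that the two strings occupy disjoint sets of numbers, i.e.\ that $\lambda_{k,p-1}\neq\lambda_{k',p}$ for all $k,k'\geq1$.

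The disjointness reduces to a short computation. Expanding \eqref{eq1:lambda_(k,p)} and subtracting, the putative equality $\lambda_{k,p-1}=\lambda_{k',p}$ is equivalent to $(k-k')(k+k'+2n-2)=2(n-p)-1$. The right-hand side is an odd integer lying in the range $1\leq 2(n-p)-1\leq 2n-1$, whereas the left-hand side is either $0$ (when $k=k'$) or has absolute value at least $k+k'+2n-2\geq 2n$ (when $k\neq k'$, since $k,k'\geq1$); in neither case can it equal $2(n-p)-1$. Thus the two strings never share a value. Consequently the multiplicity that $\spec_p(\Gamma\ba S^{2n-1})$ assigns to a value belonging to the first string equals $\dim V_{\pi_{k-1,p}}^\Gamma$ alone, and likewise $\dim V_{\pi_{k-1,p+1}}^\Gamma$ for the second string. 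Reindexing as in \eqref{eq1:F_Gamma^p} then recovers $F_\Gamma^{p-1}$ and $F_\Gamma^{p}$ from $\spec_p(\Gamma\ba S^{2n-1})$, so $p$-isospectrality forces equality of both generating functions, which completes the converse.

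The only point requiring a separate word is the boundary value $p=0$, where the first string degenerates under the convention $\lambda_{k,-1}=0$ to the single eigenvalue $0$; there the positive part of the $0$-spectrum already pins down $F_\Gamma^{0}$, and the degenerate string carries no extra information beyond the harmonic part, so no additional argument is needed.
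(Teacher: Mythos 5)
Your proof is correct and takes essentially the same route as the paper, which states Proposition~\ref{prop:p-isospIkeda} as an immediate consequence of Theorem~\ref{thm:spectrum-general} without spelling out the details. Your write-up merely makes explicit what that ``immediately'' rests on — strict monotonicity of each string in $k$ (difference $2k+2n-1$) and the disjointness identity $(k-k')(k+k'+2n-2)=2(n-p)-1$, whose parity/size argument is correct — together with the degenerate $p=0$ string, consistent with the paper's convention $F_\Gamma^{-1}(z)\equiv 0$.
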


We next recall the expression for $F_\Gamma^p(z)$ obtained by Ikeda.
In \cite[Thm.~3.2]{IkedaYamamoto79}, for $\Gamma$ a finite subgroup of $\SO(2n)$, it was shown that
\begin{equation}\label{eq:F_Gamma^0}
F_\Gamma^0(z) = \frac{1}{z}\left(\frac{1-z^2}{q}\sum_{\gamma\in\Gamma} \frac{1}{ \det(z-\gamma )}-1\right).
\end{equation}
Here $\det(z-\gamma)$ stands for $\det(\op{Id}_{2m}z-\gamma)=\prod_\lambda (z-\lambda)$, where $\lambda$ runs over the eigenvalues of $\gamma$.
In the case when $\Gamma\ba S^{2n-1}=L(q;s_1,\dots,s_n)$, we have that
\begin{equation}\label{eq:F_L^0}
F_\Gamma^0(z) = \frac{1}{z}\left(\frac{1-z^2}{q}\sum_{h=0}^{q-1} \prod_{j=1}^n \frac{1}{ (z-\xi_q^{h s_j})(z-\xi_q^{-h s_j})}-1\right).
\end{equation}
This equation was used in \cite{Ikeda80_isosp-lens} to give the first pair of $0$-isospectral lens spaces.

\begin{remark}\label{rem:F_L^0}
We point out that \eqref{eq:F_Gamma^0} is not identical to the one given in \cite{IkedaYamamoto79} and \cite{Ikeda80_isosp-lens}, since there $F_\Gamma^0(z)$ was defined by $\sum_{k\geq0} \dim V_{\pi_{k-1,1}}^\Gamma z^k$.
Actually, \cite[(4.7)]{LMR-survey} is missing the term $z^{-1}$.

From now on, for $L=\Gamma\ba S^{2n-1}$ a lens orbifold, we will write $F_L^p(z)$ in place of $F_\Gamma^p(z)$.
Furthermore, for a lens space $L=L(q;s)$, we write
\begin{equation}\label{eq:F_L^0abreviado}
\widetilde F_{L}^0(z) = \frac{q(zF_L^0(z)+1)}{(1-z^2) }
= \sum_{h=0}^{q-1} \prod_{j=1}^n \frac{1}{ (z-\xi_q^{h s_j})(z-\xi_q^{-h s_j})}.
\end{equation}
This new function will be very useful since $L$ and $L'$ are $0$-isospectral if and only if $\widetilde F_{L}^0(z)=\widetilde F_{L'}^0(z)$.
\end{remark}

Furthermore, Ikeda also gave an expression for $F_{\Gamma}^p(z)$ for $1\leq p\leq n-1$ and $\Gamma$ a finite subset of $\SO(2n)$, namely (see \cite[p.~394]{Ikeda88}),
\begin{equation}\label{eq:neat}
F_\Gamma^p(z) = (-1)^{p+1} z^{-p} + \frac{1}{|\Gamma|} \sum_{k=0}^p (-1)^{p-k} (z^{k-p}-z^{p-k+2})\sum_{\gamma\in\Gamma} \frac{\chi^{k}(\gamma)}{\det(z-\gamma)}.
\end{equation}
Here, $\chi^k$ denotes the character of the $k$-exterior representation $\bigwedge^k(\C^{2m})$ of $\SO(2m)$.

\subsection{Ikeda's examples}\label{subsec:Ikeda'sexamples}
We now recall briefly the examples of families of lens spaces isospectral for $p$ satisfying $0\leq p\leq p_0$.
They were given by Ikeda in \cite{Ikeda88} (see also \cite[\S4]{LMR-survey}).
These examples will be cited several times in the rest of the article.

For $q$ and $n$ positive integers, let ${\mathfrak L}_0(q;n)$ denote the classes (up to isometry) of $(2n-1)$-dimensional lens spaces $L(q;s_1,\dots,s_n)$ satisfying $s_i\not\equiv \pm s_j\pmod q$ for all $i\neq j$.
For a lens space $L=L(q;s_1,\dots,s_n)$ in ${\mathfrak L}_0(q;n)$, choose $h$ integers $\bar s_1,\dots,\bar s_h$ such that
$$
\{\pm s_1,\dots, \pm s_n, \pm \bar s_1,\dots,\pm\bar s_h\}
$$
is a set of representatives of integers mod~$q$, coprime to $q$.
Therefore $2n + 2h = \phi(q)$, where $\phi(q)$ denotes the Euler phi function.
Set $\bar L=L(q;\bar s_1,\dots,\bar s_h)$.
One can easily check that two lens spaces $L$ and $L'$ in ${\mathfrak L}_0(q;n)$ are isometric if and only if $\overline{L}$ and $\overline{L'}$ are isometric in ${\mathfrak L}_0(q;h)$ (see \cite[Prop.~3.3]{Ikeda88}).

We now assume that $q$ is a prime number and $n$ satisfies $q-1=2n+4$, in order to have $h=2$.
Set
\begin{equation}
{\mathfrak L}_p(q;n) = \left\{L(q;s)\in {\mathfrak L}_0(q;n):
\begin{array}{l}
a_1\bar s_1+a_2\bar s_2\not\equiv0\pmod q ,\\
\forall \; 1\leq |a_1|+|a_2|\leq p+2
\end{array}
\right\},
\end{equation}
thus one has the filtration
\begin{equation}\label{eq:filtration}
{\mathfrak L}_0(q;n) \supset {\mathfrak L}_1(q;n) \supset {\mathfrak L}_2(q;n)\supset\cdots\,.
\end{equation}
With this notation we can state Ikeda's result.

\begin{theorem}\label{thm:mainIkeda}
Let $q$ be an odd prime, $n=(q-5)/2$ and $0\leq p_0\leq n-1$ and let $L$ and $L'$ be lens spaces in ${\mathfrak L}_{p_0}(q;n)$.
Then $L$ and $L'$ are $p$-isospectral for all $0\leq p\leq p_0$.
If furthermore $L\in {\mathfrak L}_{p_0+1}(q;n)$ and $L'\notin {\mathfrak L}_{p_0+1}(q;n)$, then $L$ and $L'$ are not $(p_0+1)$-isospectral.
\end{theorem}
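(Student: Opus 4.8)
The plan is to reduce everything to Ikeda's generating functions and to exploit systematically that the residues of $L$ together with those of its complement $\bar L=L(q;\bar s_1,\bar s_2)$ exhaust the nonzero classes modulo the prime $q$. By Remark~\ref{rem:F_L^0} and Proposition~\ref{prop:p-isospIkeda}, the spaces $L$ and $L'$ are $p$-isospectral for all $0\le p\le p_0$ if and only if $F_L^p(z)=F_{L'}^p(z)$ for every $0\le p\le p_0$, where $F_L^0$ is read off from \eqref{eq:F_L^0abreviado} and $F_L^p$ for $1\le p\le p_0$ from \eqref{eq:neat}. Thus it suffices to control the low-order coefficients of these functions and to show that, for members of ${\mathfrak L}_{p_0}(q;n)$, they do not depend on the individual lens space.

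I would first record the complementary duality. Since $q$ is prime and $\{\pm s_1,\dots,\pm s_n,\pm\bar s_1,\pm\bar s_2\}$ is a complete set of nonzero residues mod $q$, for each $h\not\equiv0$ the eigenvalues of $\gamma_{q,s}^{\,h}$ and those of $\gamma_{q,\bar s}^{\,h}$ together run once through the nontrivial $q$-th roots of unity. Hence $\det(z-\gamma_{q,s}^{\,h})\,\det(z-\gamma_{q,\bar s}^{\,h})=\Phi(z)$ with $\Phi(z):=\tfrac{z^q-1}{z-1}$, independently of the choice of $L\in{\mathfrak L}_0(q;n)$. Since $\prod_{j=1}^{q-1}(t-\xi_q^{\,j})=1+t+\dots+t^{q-1}$, the $i$-th elementary symmetric function of the full eigenvalue set equals $(-1)^i$; comparing the generating series of elementary and complete homogeneous symmetric functions then gives the decisive reduction $\chi^k(\gamma_{q,s}^{\,h})=(-1)^k\sum_{m=0}^{k}h_m(\bar L_h)$, where $h_m(\bar L_h)$ is the $m$-th complete homogeneous symmetric function in the four eigenvalues $\xi_q^{\pm h\bar s_1},\xi_q^{\pm h\bar s_2}$. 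The gain is that the $n$ parameters $s_j$ disappear and every ingredient of $F_L^p$ is expressed through $\bar s_1,\bar s_2$ alone.

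Substituting the duality $1/\det(z-\gamma_{q,s}^{\,h})=\det(z-\gamma_{q,\bar s}^{\,h})/\Phi(z)$ and the reduction of $\chi^k$ into \eqref{eq:neat} and \eqref{eq:F_L^0abreviado}, I would expand $\det(z-\gamma_{q,\bar s}^{\,h})=\sum_l(-1)^lz^{4-l}e_l(\bar L_h)$, $h_m(\bar L_h)=\sum_{\alpha,\beta}N_m(\alpha,\beta)\,\xi_q^{\,h(\alpha\bar s_1+\beta\bar s_2)}$ and $e_l(\bar L_h)=\sum_{\alpha',\beta'}M_l(\alpha',\beta')\,\xi_q^{\,h(\alpha'\bar s_1+\beta'\bar s_2)}$ with universal coefficients $N_m,M_l$, and sum over $h$ using $\sum_{h=1}^{q-1}\xi_q^{\,hW}=q\,[\,W\equiv0\,]-1$. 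All terms not involving the indicator are common to $L$ and $L'$; the only lens-space-dependent contribution is a weighted count of solutions of $A\bar s_1+B\bar s_2\equiv0\pmod q$ with $A=\alpha+\alpha'$, $B=\beta+\beta'$. Because $m\le k\le p$ forces $|\alpha|+|\beta|\le p$ while the exterior weights obey $|\alpha'|+|\beta'|\le2$, only relations with $|A|+|B|\le p+2$ can occur; moreover $p_0+3\le(q-1)/2$, so $\tfrac1{\Phi(z)}\equiv1-z\pmod{z^q}$ over the relevant range and no wrap-around intervenes. For $L,L'\in{\mathfrak L}_{p_0}(q;n)$ no nontrivial relation of length $\le p_0+2$ exists, so only the universal $(A,B)=(0,0)$ term survives and $F_L^p=F_{L'}^p$ for all $0\le p\le p_0$, which is the asserted isospectrality.

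For the final claim I would locate the first discrepant coefficient. If $L\in{\mathfrak L}_{p_0+1}(q;n)$ but $L'\notin{\mathfrak L}_{p_0+1}(q;n)$, then, both lying in ${\mathfrak L}_{p_0}(q;n)$, the space $L'$ admits a relation $A\bar s_1'+B\bar s_2'\equiv0$ with $|A|+|B|=p_0+3$ and none shorter, whereas $L$ admits none of length $\le p_0+3$. By the length bound above this relation enters $F^{p_0+1}$ exactly at its threshold $(p_0+1)+2$ for $L'$ and contributes nothing for $L$, so $F_L^{p_0+1}\ne F_{L'}^{p_0+1}$. I expect the main obstacle to be precisely this last bookkeeping: verifying that a single minimal relation at the threshold length yields a genuinely nonzero change in a specific power-series coefficient of $F_L^{p_0+1}(z)$, i.e.\ that the contributions indexed by the ranges of $k$, $m$ and $l$, together with the two shifts $z^{k-p}$ and $z^{p-k+2}$ in \eqref{eq:neat} and the expansion of $1/\Phi(z)$, do not cancel. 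Securing this non-cancellation, rather than the structural reduction itself, is where the real work lies.
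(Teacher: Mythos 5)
Your structural reduction is exactly the right one, and it is in fact the route of Ikeda's original proof: the paper itself does not prove Theorem~\ref{thm:mainIkeda} but quotes it from \cite{Ikeda88}, and what you describe --- the complementary identity $\det(z-\gamma_{q,s}^h)\det(z-\gamma_{q,\bar s}^h)=1+z+\dots+z^{q-1}$ for $h\not\equiv0$, the reduction of $\chi^k(\gamma_{q,s}^h)$ to complete homogeneous symmetric functions in the four eigenvalues $\xi_q^{\pm h\bar s_1},\xi_q^{\pm h\bar s_2}$, and the character-sum $\sum_{h=1}^{q-1}\xi_q^{hW}=q\,[\,W\equiv0\,]-1$ isolating the lens-space-dependent part of \eqref{eq:neat} as a weighted count of relations $A\bar s_1+B\bar s_2\equiv0$ with $|A|+|B|\le p+2$ --- is precisely the mechanism behind \cite[(4.6)]{Ikeda88}, as recalled in Subsection~\ref{subsec:GornetMcGowan} (common term plus a polynomial over $\Phi_q(z)$ whose coefficients are governed by these arithmetic conditions). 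Your verification that the elementary symmetric functions of the full eigenvalue set equal $(-1)^i$, the bound $|\alpha'|+|\beta'|\le2$ for the exterior factors, and the inequality $p_0+3\le(q-1)/2$ ruling out spurious short relations are all correct, so the first assertion of the theorem ($p$-isospectrality for $0\le p\le p_0$ on ${\mathfrak L}_{p_0}(q;n)$) is adequately established by your argument.

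The genuine gap is the one you name yourself: the second assertion is not proved. Saying that a minimal relation of length $p_0+3$ ``enters $F^{p_0+1}$ at its threshold'' does not yet show $F_L^{p_0+1}\neq F_{L'}^{p_0+1}$, because a priori the contributions indexed by $(k,m,l)$, carrying the signs $(-1)^{p-k}$, $(-1)^k$, $(-1)^{l}$ and the two shifts $z^{k-p}-z^{p-k+2}$, could cancel coefficientwise; this sign analysis is the mathematical heart of Ikeda's theorem, not an afterthought of bookkeeping. The missing step can be closed along the following lines, and it is what Ikeda actually does: at total one-norm exactly $p+2$ the contribution is forced to come from $m=k=p$ (so the shift factor is $1-z^2\neq0$) and from $e_2$-terms with $|\alpha'|+|\beta'|=2$, and the resulting weights are the nonnegative monomial counts $N_p(\alpha,\beta)$, at least one of which is strictly positive for the given relation; moreover the $\pm$-paired relations $(A,B)$ and $(-A,-B)$ contribute with the \emph{same} sign. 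Hence the relevant coefficient of the polynomial over $\Phi_q(z)$ is a fixed positive multiple of the number of solutions of $a_1\bar s_1+a_2\bar s_2\equiv0$ with $|a_1|+|a_2|=p_0+3$, which is zero for $L$ and positive for $L'$. Until this sign-definiteness is carried out, your proposal proves only the isospectrality half of Theorem~\ref{thm:mainIkeda}.
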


\begin{example}\label{ex:Ikeda'sexamples}
We now give the first steps of this sequence of families.
The cases satisfying $q<11$ are not interesting since ${\mathfrak L}_{0}(q;(q-5)/2)$ has less than two elements.
If $q=11$, then $n=3$ and we have that $L_1:=L(11;3,4,5)\simeq L(11;1,2,4) \in {\mathfrak L}_{0}(11;3) \smallsetminus {\mathfrak L}_{1}(11;3)$ and $L_2:=L(11;2,4,5)\simeq L(11;1,2,3) \in {\mathfrak L}_{1}(11;3)$.
Hence, $L_1$ and $L_2$ are $0$-isospectral but not $1$-isospectral, that is, they are $\{0\}$-isospectral according to Notation~\ref{notation:I-isospectral}.

For $q=13$ we have that $n=4$ and one can check that ${\mathfrak L}_{0}(13;4)$ is given by the elements
$$
\begin{array}{l}
L_1:=L(11;3,4,5,6)\simeq L(11;1,2,3,6) \in {\mathfrak L}_{0}(13;4) \smallsetminus {\mathfrak L}_{1}(13;4), \\
L_2:=L(11;2,4,5,6)\simeq L(11;1,2,3,4) \in {\mathfrak L}_{1}(13;4)\smallsetminus {\mathfrak L}_{2}(13;4),\\
L_3:=L(11;2,3,4,6)\simeq L(11;1,2,3,5) \in {\mathfrak L}_{2}(13;4).
\end{array}
$$
Therefore, these manifolds are all $0$-isospectral pairwise, $L_2$ and $L_3$ are also $1$-isospectral, $L_1$ is not $1$-isospectral to any other in the family, and there are no $2$-isospectral pairs among them.
In other words, the family $\{L_1,L_2,L_3\}$ is $\{0\}$-isospectral, $\{L_2,L_3\}$ is $\{0,1\}$-isospectral and there is no any $\{0,1,2\}$-isospectral subfamily of $\{L_1,L_2,L_3\}$. 
\end{example}

\begin{remark}
	Adapting this method, Shams~\cite{Shams11} found many families of $0$-isospectral lens orbifolds with singular points of dimension $\geq9$. 
	He worked with $q$ a power of a prime number and also a product of two different primes numbers. 
\end{remark}

\subsection{Gornet and McGowan's computational study}\label{subsec:GornetMcGowan}
Gornet and McGowan made a computational study on $p$-isospectral lens spaces by following Ikeda's approach.
We next try to explain what they did in \cite{GornetMcGowan06}.

Fix $q$ a prime number and write $q-1=2n+2h$ with $n$ and $h$ positive integer numbers (they considered the cases $2\leq h\leq 4$).
For $L$ in $\mathfrak L_0(q;n)$, the generating function $F_L^p(z)$ in \eqref{eq:neat} can be written as (a common term plus) a polynomial over the $q$-th cyclotomic polynomial $\Phi_q(z)$ (see \cite[(4.6)]{Ikeda88} and \cite[(4.11)]{LMR-survey}).
Such polynomial is \cite[(3)]{GornetMcGowan06}, and its coefficients depend on certain arithmetic conditions (see \cite[(5)]{GornetMcGowan06}).

Gornet and McGowan computed these coefficients, for every prime number $q\leq 100$ and $h=2,3,4$, and then by comparing these numbers obtained the families of lens spaces in $\mathfrak L_0(q;n)$ that are $p$-isospectral, for any $0\leq p\leq n-1$.
We note that their examples are quite different from the ones given in this paper.
First at all, they work in lens spaces contained in $\mathfrak L_0(q;n)$, while we work with arbitrary lens orbifolds.
Furthermore, the dimension of their examples is subject to the choices of $q$ and $h$ ($2n-1=q-1-2h$) and consequently it grows quickly.
In our case, we fix $q$ and the dimension $2n-1$ up to $11$ ($17$ in \cite{Lauret-appendix}).

Unfortunately, the condition of $q$ being a prime number is essential in the previous argument, and it was not assumed in \cite{GornetMcGowan06}.
Sebastian Boldt communicated to the authors the following counterexample: the $3$-dimensional lens spaces $L(15;1,2)$ and $L(15;1,4)$ with fundamental group of order $15$ are not isometric and satisfy the condition for being $0$-isospectral given in \cite{GornetMcGowan06}, which is impossible by \cite{IkedaYamamoto79}.
The author thanks Sebastian Boldt for sharing this smart example.
Ruth Gornet communicated the author in the ``VI Workshop on Differential Geometry 2016'' at C\'ordoba, Argentina, that she and Jeffrey McGowan are working to extend their results to composite numbers given by the product of two primes numbers, including the case of the square of a prime number.

\subsection{One-norm approach}

In \cite{LMR-onenorm} was started a study of the spectra of lens spaces in connection with the one-norm lengths of elements in the associated \emph{congruence lattice}.
Apparently, this relation was already known by Ikeda and Yamamoto as it is indicated in \cite{Yamamoto80}.
However, they did not make any further use of this connection.
This approach works also for spherical orbifolds of the form $\Gamma\ba S^{2n-1}$ with $\Gamma$ a finite abelian subgroup of $\SO(2n)$.

We associate to a lens orbifold $L(q;s_1,\dots,s_n)$ the \emph{congruence lattice}
\begin{equation}\label{eq1:conglattice}
\mathcal L(q;s_1,\dots,s_n) :=
\left\{\sum_{j=1}^n a_j\varepsilon_j \in \Z^n: a_1s_1+\dots+a_ns_n\equiv0\pmod q\right\}.
\end{equation}
For $\mu=(a_1,\dots,a_n)\in\Z^n$, we write $\norma{\mu}=|a_1|+\dots+|a_n|$ for the \emph{one-norm} of $\mu$ and let $Z(\mu)$ be the number of zero coordinates of $\mu$.
For any subset $\mathcal L$ of $\Z^n$, set
\begin{align}\label{eq1:N_L(k)N_L(k,l)}
N_{\mathcal L} (k,\ell) &= \#\{\mu\in \mathcal L: \norma{\mu}=k,\;Z(\mu)=\ell\},\\
N_{\mathcal L} (k)& = \#\{\mu\in \mathcal L: \norma{\mu}=k\} . \notag
\end{align}
Furthermore, the \emph{one-norm} generating function of $\mathcal L$ is given by
\begin{equation}
  \vartheta_{\mathcal L}(z) = \sum_{k\geq0} N_{\mathcal L}(k)z^k.
\end{equation}
Similarly, for $0\leq \ell\leq n$, we let
\begin{equation}
  \vartheta_{\mathcal L}^{(\ell)}(z) = \sum_{k\geq0} N_{\mathcal L}(k,\ell)z^k.
\end{equation}

After quite some effort, the author proved in \cite[Thm.~2.2]{Lauret-multip} an explicit expression for $F_{L}^p(z)$ for any $0\leq p\leq n-1$.

\begin{theorem}\label{thm:F_L^p}
Let $L$ be a $(2n-1)$-dimensional lens orbifold with associated congruence lattice $\mathcal L$.
For each $1\leq p\leq n$,
we have that
\begin{equation}\label{eq1:F_Gamma^p-1-formula}
F_L^{p-1}(z) = \frac{1}{z^p(1-z^2)^{n-1}}\sum_{\ell=0}^{n} \vartheta_{\mathcal L}^{(\ell)}(z) \; A_{p}^{(\ell)}(z)+\frac{(-1)^p}{z^p},
\end{equation}
where \begin{align}\label{eq3:A_pl}
A_{p}^{(\ell)}(z) &= \sum_{j=1}^{p} (-1)^{j-1}  \sum_{t=0}^{\lfloor\frac{p-j}{2}\rfloor} \binom{n-p+j+2t}{t}
\\ &\quad
\sum_{\beta=0}^{p-j-2t} 2^{p-j-2t-\beta} \binom{n-\ell}{\beta} \binom{\ell}{p-j-2t-\beta}
\sum_{\alpha=0}^\beta \binom{\beta}{\alpha} \sum_{i=0}^{j-1} z^{2(p-j-t-\alpha+i)}. \notag
\end{align}
\end{theorem}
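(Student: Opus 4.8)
The plan is to deduce the formula from Ikeda's closed expression \eqref{eq:neat} by evaluating the character sums $\frac1q\sum_{\gamma}\chi^k(\gamma)/\det(z-\gamma)$ for the cyclic group $\Gamma=\Gamma_{q,s}$ and converting them into the lattice-point counts encoded by $\vartheta_{\mathcal L}^{(\ell)}$. First I would replace $p$ by $p-1$ in \eqref{eq:neat}, so that only the characters $\chi^k$ with $0\leq k\leq p-1$ occur. Since $\gamma_{q,s}^h$ has eigenvalues $\xi_q^{\pm h s_j}$ for $1\leq j\leq n$, one has $\det(z-\gamma_{q,s}^h)=\prod_{j=1}^n(z-\xi_q^{hs_j})(z-\xi_q^{-hs_j})$ and $\chi^k(\gamma_{q,s}^h)$ equals the degree-$k$ elementary symmetric function of the $2n$ eigenvalues. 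The crucial observation is that gathering all $k$ into a single variable $t$ factorizes the decorated resolvent over the $n$ coordinate blocks:
\begin{equation*}
\sum_{k\geq0}\frac{\chi^k(\gamma_{q,s}^h)}{\det(z-\gamma_{q,s}^h)}\,t^k
=\prod_{j=1}^n \frac{(1+t\,\xi_q^{hs_j})(1+t\,\xi_q^{-hs_j})}{(z-\xi_q^{hs_j})(z-\xi_q^{-hs_j})}.
\end{equation*}
The truncation $k\leq p-1$ is then handled at the very end by extracting only the coefficients of $t^0,\dots,t^{p-1}$.

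Next I would expand each block as a series in $z^{-1}$. Using $\frac{1}{(z-w)(z-\bar w)}=\sum_{a,b\geq0}w^{a-b}z^{-a-b-2}$ with $\bar w=w^{-1}$ and multiplying by $(1+tw)(1+t\bar w)=1+t(w+\bar w)+t^2$, the block contribution to a fixed net power $w^{c}$ becomes $S(c)+t\,(S(c-1)+S(c+1))+t^2S(c)$, where $S(d)=z^{-|d|-2}/(1-z^{-2})$ records the geometric sum over the repeated index. Taking the product over $j$ (with $w=\xi_q^{hs_j}$) and applying $\frac1q\sum_{h=0}^{q-1}$, the orthogonality relation $\frac1q\sum_h\xi_q^{h(s_1c_1+\dots+s_nc_n)}\in\{0,1\}$ kills every net exponent vector except those with $s_1c_1+\dots+s_nc_n\equiv0\pmod q$, that is, except the points of the congruence lattice $\mathcal L$ of \eqref{eq1:conglattice}. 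This is exactly the mechanism that brings in $\vartheta_{\mathcal L}$.

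The heart of the argument is then a local computation at each fixed $\mu=(c_1,\dots,c_n)\in\mathcal L$, where the split between its $Z(\mu)=\ell$ vanishing coordinates and its $n-\ell$ nonzero ones becomes essential --- this is why $\ell$ enters for $p\geq1$ although it was invisible for $p=0$. For a zero block the two $t$-degree-$1$ decorations are equal since $S(1)=S(-1)$, producing the clean factor $2$ per decorated zero block, whereas for a nonzero block they are not; this should explain the factors $2^{\,p-j-2t-\beta}$ together with the binomials $\binom{n-\ell}{\beta}\binom{\ell}{\,p-j-2t-\beta\,}$ distributing the decorated blocks among the two classes. Resumming the pure geometric series $(1-z^{-2})^{-1}$ over the undecorated blocks, after pulling the global factor $(1-z^2)^{-(n-1)}$ to the front, should produce the negative-binomial coefficient $\binom{n-p+j+2t}{t}$, while resumming Ikeda's weights $(-1)^{p-1-k}(z^{k-p+1}-z^{p+1-k})$ under the reindexing $j=p-k$ yields the sign $(-1)^{j-1}$ and the telescoped geometric factor $\sum_{i=0}^{j-1}z^{2(\cdots+i)}$. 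Grouping the surviving terms by $(\norma{\mu},Z(\mu))=(k,\ell)$ turns the lattice sum into $\sum_{\ell}\vartheta_{\mathcal L}^{(\ell)}(z)\,A_p^{(\ell)}(z)$, and isolating the $\mu=0$ and lowest-order terms reconciles the additive constant with the stated $(-1)^p/z^p$.

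I expect the genuine obstacle to be precisely this last reassembly: checking that the multiply nested sum over block configurations, geometric resummations, and truncated exterior-power weights collapses \emph{identically} to the four-fold binomial sum \eqref{eq3:A_pl} for every $1\leq p\leq n$ and $0\leq\ell\leq n$. Each individual binomial has a transparent combinatorial meaning (choosing which blocks are decorated, splitting the decorations between zero and nonzero coordinates, and expanding $(1-z^2)^{-m}$), but assembling them with the exact exponents of $z$ and the correct signs is delicate, and I would most likely route the verification through an auxiliary generating-function identity in $t$, or an induction on $p$, rather than a direct term-by-term match.
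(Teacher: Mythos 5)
Your proposal takes a genuinely different route from the paper's. The paper does not prove Theorem~\ref{thm:F_L^p} at all: it imports it from \cite[Thm.~2.2]{Lauret-multip}, where the formula is obtained from explicit closed expressions for the individual multiplicities $\dim V_{\pi_{k,p}}^{\Gamma}$, computed by summing torus weight multiplicities of $\pi_{k,p}$ over the congruence lattice \eqref{eq1:conglattice} (this is the whole point of the one-norm machinery: $\Gamma$ sits in the maximal torus, so $\dim V_{\pi}^\Gamma=\sum_{\mu\in\mathcal L}m_{\pi}(\mu)$). You instead start from Ikeda's rational expression \eqref{eq:neat} and the exterior-power identity $\sum_{k\geq0}\chi^k(\gamma)t^k=\det(\Id+t\gamma)$, factor over the $n$ eigenvalue blocks, and use cyclic orthogonality to cut down to $\mathcal L$. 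This organization is sound and viable: it avoids computing any weight multiplicity and makes the combinatorial meaning of every factor in \eqref{eq3:A_pl} visible, at the price of delivering only the generating identity rather than the stronger per-eigenvalue multiplicity formulas of \cite{Lauret-multip}.

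Two repairs are needed, and one reassurance is due. First, do not expand in $z^{-1}$: with your $S(d)=z^{-|d|-2}/(1-z^{-2})$ the lattice sum assembles into $\vartheta_{\mathcal L}^{(\ell)}(1/z)$, not $\vartheta_{\mathcal L}^{(\ell)}(z)$, and the conversion is a reciprocity identity that mixes the levels $\ell$ (already for $n=1$ one has $\vartheta^{(0)}_{\mathcal L}(1/z)=-\vartheta^{(0)}_{\mathcal L}(z)-2\vartheta^{(1)}_{\mathcal L}(z)$). The clean fix is the identity $(z-w)(z-w^{-1})=(1-zw)(1-zw^{-1})$, which lets you expand for $|z|<1$ with block sums $z^{|c|}/(1-z^2)$, producing $\vartheta_{\mathcal L}^{(\ell)}(z)$ on the nose. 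Second, \eqref{eq:neat} as quoted is in Ikeda's normalization, one power of $z$ off from \eqref{eq1:F_Gamma^p} (the same convention hazard flagged in Remark~\ref{rem:F_L^0}): at $p=1$ its constant term is $z^{-1}$ while $F^1_L(z)$ above begins with $z^{-2}$, so taken at face value your computation lands on $zF_L^{p-1}(z)$. Finally, the ``last reassembly'' you fear needs no induction or auxiliary $t$-identity: with $j=p-k$ one factors $z^{k-p+1}-z^{p-k+1}=z^{1-j}(1-z^2)\sum_{i=0}^{j-1}z^{2i}$, giving the sign $(-1)^{j-1}$ and the $i$-sum; each block contributes exactly one of $1$, $t\cdot 2z$ (zero coordinate), $t\,(z+z^{-1})$ (nonzero coordinate, whence $\sum_{\alpha}\binom{\beta}{\alpha}$), or $t^2$, and since the $t^2$-term reproduces the undecorated block, the coefficient $\binom{n-p+j+2t}{t}=\binom{n-(p-j-2t)}{t}$ simply counts the placements of the $t^2$-factors among the blocks not carrying a $t$-linear factor --- it is not a negative-binomial resummation of $(1-z^2)^{-m}$ as you guessed. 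With these corrections the exponents collapse to $z^{2(p-j-t-\alpha+i)}$ after extracting $z^{-p}(1-z^2)^{-(n-1)}$, matching \eqref{eq3:A_pl} term by term.
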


\begin{remark}
It is important to note that $A_{p}^{(\ell)}(z)$ does not depend on the particular lens orbifold $L$.
Furthermore, it is a polynomial of degree $\leq 2(p-1)$ since $0\leq 2(p-j-2t-\beta) \leq 2(p-j-t-\alpha+i)\leq 2(p-j+j-1)=2(p-1)$.
\end{remark}

For example, we have that
\begin{align*}
F_L^0(z)
&= -\frac{1}{z}+\frac{1}{z(1-z^2)^{n-1}} \sum_{\ell=0}^n \theta_{\mathcal L}^{(\ell)}(z), \\
F_L^1(z)
&= \frac{1}{z^2}+ \frac{1}{z^2(1-z^2)^{n-1}} \sum_{\ell=0}^{n} \theta_{\mathcal L}^{(\ell)}(z) \Big((n-1+\ell)z^2+(n-1-\ell)\Big).
\end{align*}

As a consequence of Theorem~\ref{thm:F_L^p}, one obtains the following characterization (see \cite[Cor.~2.3]{Lauret-multip}).

\begin{theorem}\label{thm:charact[0,p]}
Let $0\leq p_0\leq n-1$ and let $L$ and $L'$ be $(2n-1)$-dimensional lens orbifolds with associated congruence lattices $\mathcal L$ and $\mathcal L'$ respectively.
Then, $L$ and $L'$ are $p$-isospectral for every $0\leq p\leq p_0$ if and only if
\begin{equation}\label{eq1:condition[0,p_0]-isosp}
\sum_{\ell=0}^n \ell^h \, \vartheta_{\mathcal L}^{(\ell)}(z) =
\sum_{\ell=0}^n \ell^h \, \vartheta_{\mathcal L_{\Gamma'}}^{(\ell)}(z)
\qquad\text{for all }0\leq h\leq p_0.
\end{equation}
In particular, $L$ and $L'$ are $0$-isospectral if and only if $\vartheta_{\mathcal L}(z)= \vartheta_{\mathcal L'}(z)$.
Moreover, $L$ and $L'$ are $p$-isospectral for all $p$ if and only if $\vartheta_{\mathcal L}^{(\ell)}(z)=\vartheta_{\mathcal L'}^{(\ell)}(z)$ for all $0\leq \ell\leq n$.
\end{theorem}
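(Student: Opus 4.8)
The plan is to translate $p$-isospectrality into identities among Ikeda's generating functions, substitute the closed formula of Theorem~\ref{thm:F_L^p}, and then read off the moment conditions \eqref{eq1:condition[0,p_0]-isosp} from the way the universal polynomials $A_p^{(\ell)}(z)$ depend on $\ell$. By Proposition~\ref{prop:p-isospIkeda}, the orbifolds $L$ and $L'$ are $p$-isospectral for every $0\le p\le p_0$ if and only if $F_L^{p-1}(z)=F_{L'}^{p-1}(z)$ for every $1\le p\le p_0+1$ (the remaining string attached to the $p=0$ spectrum records only the zero eigenvalue, which has multiplicity one on any two connected orbifolds and is therefore matched automatically). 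Writing $D^{(\ell)}(z)=\vartheta_{\mathcal L}^{(\ell)}(z)-\vartheta_{\mathcal L'}^{(\ell)}(z)$ and subtracting the two instances of \eqref{eq1:F_Gamma^p-1-formula}, the $L$-independent factor $\tfrac{1}{z^p(1-z^2)^{n-1}}$ and the constant $\tfrac{(-1)^p}{z^p}$ drop out, so the condition becomes
\[
\sum_{\ell=0}^n D^{(\ell)}(z)\,A_p^{(\ell)}(z)=0\qquad\text{for every }1\le p\le p_0+1 .
\]

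The crux is the dependence of $A_p^{(\ell)}(z)$ on $\ell$. In \eqref{eq3:A_pl} the variable $\ell$ occurs only through $\binom{n-\ell}{\beta}\binom{\ell}{p-j-2t-\beta}$, a polynomial in $\ell$ of degree $p-j-2t$; hence $A_p^{(\ell)}(z)=\sum_{h=0}^{p-1}c_{p,h}(z)\,\ell^h$ is a polynomial in $\ell$ of degree at most $p-1$ with coefficients $c_{p,h}(z)\in\Q[z]$. The main point I would have to establish is that the top coefficient $c_{p,p-1}(z)$ is nonzero. The maximal degree $p-1$ in $\ell$ is reached only in the summand $j=1,\,t=0$, and there the coefficient of $z^{2(p-1)}$ (coming from $\alpha=0$) equals $\tfrac{1}{(p-1)!}\sum_{\beta=0}^{p-1}\binom{p-1}{\beta}2^{\,p-1-\beta}(-1)^{\beta}=\tfrac{(2-1)^{p-1}}{(p-1)!}\neq 0$, so $c_{p,p-1}(z)\neq 0$ in $\Q[z]$. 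Thus $\{A_p^{(\ell)}\}_{1\le p\le p_0+1}$ is a triangular family of polynomials in $\ell$, spanning the same space as $\{\ell^h\}_{0\le h\le p_0}$. Substituting the expansion into the displayed condition, one implication is immediate; for the converse I would induct on $p$, using that once $\sum_{\ell}\ell^h D^{(\ell)}(z)=0$ is known for all $h<p-1$, the $p$-th identity collapses to $c_{p,p-1}(z)\sum_{\ell}\ell^{\,p-1}D^{(\ell)}(z)=0$, whence dividing in the field $\Q(z)$ yields the moment $\sum_\ell\ell^{\,p-1}D^{(\ell)}(z)=0$. This gives the equivalence with \eqref{eq1:condition[0,p_0]-isosp}.

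Finally, the two displayed special cases follow by specialization. Taking $p_0=0$ leaves the single condition $\sum_\ell D^{(\ell)}(z)=0$, which is $\vartheta_{\mathcal L}(z)=\vartheta_{\mathcal L'}(z)$ because $N_{\mathcal L}(k)=\sum_\ell N_{\mathcal L}(k,\ell)$. For the all-$p$ case one takes $p_0=n-1$, obtaining the $n$ moment conditions $h=0,\dots,n-1$; the mild obstacle here is that these are $n$ equations in the $n+1$ unknowns $D^{(0)},\dots,D^{(n)}$. This is resolved by observing that $\vartheta_{\mathcal L}^{(n)}(z)\equiv 1$ for every lens orbifold (the origin is the only vector with all coordinates zero), so $D^{(n)}(z)=0$; the remaining system is Vandermonde with nodes $0,1,\dots,n-1$ and hence forces $D^{(\ell)}(z)=0$ for all $\ell$, i.e.\ $\vartheta_{\mathcal L}^{(\ell)}(z)=\vartheta_{\mathcal L'}^{(\ell)}(z)$ for every $0\le\ell\le n$.
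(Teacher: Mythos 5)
Your proof is correct and follows essentially the route the paper intends: the paper states this theorem as a direct consequence of Theorem~\ref{thm:F_L^p} (citing \cite[Cor.~2.3]{Lauret-multip}), which is exactly your reduction via Proposition~\ref{prop:p-isospIkeda} to the system $\sum_{\ell} D^{(\ell)}(z)A_p^{(\ell)}(z)=0$ for $1\le p\le p_0+1$, together with the observation that $A_p^{(\ell)}(z)$ is a polynomial of degree $p-1$ in $\ell$ whose leading coefficient is a nonzero element of $\Q[z]$ (your computation of its top $z$-coefficient $(2-1)^{p-1}/(p-1)!\neq0$ from the $j=1$, $t=0$, $\alpha=0$ terms checks out). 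Your concluding Vandermonde argument using $\vartheta_{\mathcal L}^{(n)}(z)\equiv 1$ is likewise the same device the paper itself employs in the proof of Theorem~\ref{thm:[0,n-2]-isosp}, so there is nothing to flag.
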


\subsection{Rational form for the one-norm generating functions}

For $q$ a positive integer and $\mathcal L\subset P(G)$, we define
\begin{align}
C(q) &=\{\mu=\textstyle\sum_{i}a_i\varepsilon_i\in P(G): |a_i|<q\quad\forall\,i\},\label{eq1:reduceterms} \\
N_{\mathcal L}^{\textrm{red}}(k,\ell) &= \#\{\mu\in C(q)\cap\mathcal L: \norma{\mu}=k,\;Z(\mu)=\ell\},\notag \\
\Phi_{\mathcal L}^{(\ell)}(z) &= \sum_{k\geq0} N_{\mathcal L}^{\textrm{red}}(k,\ell) z^k.\notag
\end{align}
Note that $\Phi_{\mathcal L}^{(\ell)}(z)$ is a polynomial of degree at most $(n-\ell)(q-1)$.

\begin{theorem}\label{thm:theta^l-rational}
Let $L$ be a $(2n-1)$-dimensional lens orbifold with fundamental group of order $q$ and associated congruence lattice $\mathcal L$.
Then, for each $0\leq \ell\leq n$,
\begin{equation*}
\vartheta_{\mathcal L}^{(\ell)} (z)
=\frac{1}{(1-z^{q})^{n-\ell}}\sum_{s=0}^{n-\ell} 2^s\binom{\ell+s}{s} z^{sq} \,\Phi_{\mathcal L}^{(\ell+s)}(z).
\end{equation*}
Moreover,
\begin{equation*}
\vartheta_{\mathcal L}(z)
=\frac{1}{(1-z^{q})^{n}} \, \displaystyle\sum_{t=0}^{n} z^{tq}\sum_{\ell=t}^n \binom{\ell}{t}  \Phi_{\mathcal L}^{(\ell)}(z).
\end{equation*}
\end{theorem}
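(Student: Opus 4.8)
The plan is to exploit the fact that the congruence lattice $\mathcal L$ contains $q\Z^n$, which realizes $\mathcal L$ as a finite disjoint union of cosets of $q\Z^n$ inside $\Z^n$. First I would note that $q\varepsilon_j\in\mathcal L$ for every $j$, since $qs_j\equiv0\pmod q$, and hence $q\Z^n\subseteq\mathcal L$. Consequently $\mathcal L=\bigsqcup_{\rho}(\rho+q\Z^n)$, where $\rho=(r_1,\dots,r_n)$ ranges over the finitely many representatives in $\{0,1,\dots,q-1\}^n$ of $\mathcal L/q\Z^n$. Because both $\norma{\mu}=\sum_j|a_j|$ and the number of zero coordinates $Z(\mu)$ are additive over the coordinates, the generating function attached to a single coset $\rho+q\Z^n$ factors as a product over $j=1,\dots,n$.

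The key step is a per-coordinate computation organized by whether a residue $r_j$ vanishes. For $r_j\neq0$ the coordinate $a_j=r_j+m_jq$ is never zero, and summing $z^{|a_j|}$ over $m_j\in\Z$ gives $\frac{z^{r_j}+z^{q-r_j}}{1-z^q}$; the numerator $z^{r_j}+z^{q-r_j}$ is exactly the contribution of the two reduced representatives $r_j$ and $r_j-q$ lying in $C(q)$. For $r_j=0$ the coordinate $a_j=m_jq$ is zero precisely when $m_j=0$, so, introducing an auxiliary variable $y$ to mark zero coordinates, the per-coordinate factor is $y+\frac{2z^q}{1-z^q}$, the first term coming from the reduced representative $a_j=0$ and the second from the nonzero multiples $m_j\neq0$. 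Multiplying over $j$, a coset with $\ell_0(\rho):=\#\{j:r_j=0\}$ zero residues contributes
\[
\frac{P_\rho(z)}{(1-z^q)^{\,n-\ell_0(\rho)}}\Bigl(y+\tfrac{2z^q}{1-z^q}\Bigr)^{\ell_0(\rho)},\qquad P_\rho(z):=\prod_{j:\,r_j\neq0}\bigl(z^{r_j}+z^{q-r_j}\bigr).
\]

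Next I would sum over all $\rho$ and regroup by the value $\ell'=\ell_0(\rho)$. The crucial bookkeeping observation is that a reduced vector $\mu\in C(q)\cap\mathcal L$ satisfies $Z(\mu)=\ell_0(\rho)$, and that summing $P_\rho(z)$ over the cosets with $\ell_0(\rho)=\ell'$ recovers exactly $\Phi_{\mathcal L}^{(\ell')}(z)$. This identifies the full two-variable generating function as $\sum_{\ell'=0}^n\frac{\Phi_{\mathcal L}^{(\ell')}(z)}{(1-z^q)^{n-\ell'}}\bigl(y+\frac{2z^q}{1-z^q}\bigr)^{\ell'}$. Extracting the coefficient of $y^\ell$ via the binomial theorem, and using $\binom{\ell+s}{\ell}=\binom{\ell+s}{s}$ with $s=\ell'-\ell$, yields the first formula; setting $y=1$, simplifying $1+\frac{2z^q}{1-z^q}=\frac{1+z^q}{1-z^q}$, expanding $(1+z^q)^{\ell'}$ and swapping the order of summation yields the second.

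The main obstacle is not analytic but conceptual: getting the treatment of zero coordinates exactly right. Unlike a nonzero residue, a zero residue contributes both a genuine zero coordinate (its reduced representative) and an entire family of nonzero coordinates obtained by adding nonzero multiples of $q$; it is precisely this \emph{promotion} of would-be-zero coordinates that forces $\vartheta_{\mathcal L}^{(\ell)}$ to draw on all the higher reduced polynomials $\Phi_{\mathcal L}^{(\ell')}$ with $\ell'\geq\ell$, and that produces the binomial coefficients counting which of the $\ell'$ zero residues remain zero. Encoding this dichotomy with the marker $y$ isolates the difficulty and reduces everything else to routine generating-function manipulation.
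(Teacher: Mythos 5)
Your proof is correct: the decomposition $\mathcal L=\bigsqcup_{\rho}(\rho+q\Z^n)$ (valid since $qs_j\equiv0\pmod q$ gives $q\Z^n\subseteq\mathcal L$), the per-coordinate factors $\frac{z^{r_j}+z^{q-r_j}}{1-z^q}$ for $r_j\neq0$ and $y+\frac{2z^q}{1-z^q}$ for $r_j=0$, the identification $\sum_{\rho\,:\,\ell_0(\rho)=\ell'}P_\rho(z)=\Phi_{\mathcal L}^{(\ell')}(z)$ (the reduced representatives of a coset are exactly the products of $\{r_j,\,r_j-q\}$ and $\{0\}$, so $Z(\mu)=\ell_0(\rho)$ on them), and the extraction of the coefficient of $y^\ell$ with $s=\ell'-\ell$, $\binom{\ell+s}{\ell}=\binom{\ell+s}{s}$ and $(1-z^q)^{n-\ell'}(1-z^q)^{\ell'-\ell}=(1-z^q)^{n-\ell}$ all check out, as does setting $y=1$ and expanding $(1+z^q)^{\ell'}$ for the second formula. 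The paper states Theorem~\ref{thm:theta^l-rational} without proof (it is quoted from the cited prior work \cite{Lauret-spec0cyclic}, \cite{Lauret-multip}), and your coset-of-$q\Z^n$ decomposition with a marker variable for zero coordinates is precisely the natural argument underlying it, so there is no divergence of approach to report.
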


\begin{lemma}\label{lem:theta^n-1}
Let $L=L(q;s)$ be a $(2n-1)$-dimensional lens space and let $\mathcal L$ be its associated congruence lattice.
Then $\vartheta_{\mathcal L}^{(n)}(z)=1$ and
$$
\vartheta_{\mathcal L}^{(n-1)}(z)=  \frac{2nz^q}{1-z^q}.
$$
\end{lemma}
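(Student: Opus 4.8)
The plan is to read off both generating functions directly from the definition of the congruence lattice $\mathcal L=\mathcal L(q;s)$, by classifying the vectors $\mu\in\mathcal L$ that have exactly $n$, respectively $n-1$, vanishing coordinates. The single structural input I will use is that $L=L(q;s)$ being a \emph{lens space} means $\gcd(q,s_j)=1$ for every $j$, so each $s_j$ is invertible modulo $q$.

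First I would dispose of $\ell=n$. A vector $\mu\in\Z^n$ with $Z(\mu)=n$ must be $\mu=0$, which lies in $\mathcal L$ and satisfies $\norma{\mu}=0$. Hence $N_{\mathcal L}(0,n)=1$ and $N_{\mathcal L}(k,n)=0$ for all $k\geq1$, so $\vartheta_{\mathcal L}^{(n)}(z)=1$.

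Next, for $\ell=n-1$, a vector $\mu$ with $Z(\mu)=n-1$ has exactly one nonzero coordinate, say $\mu=a_j\varepsilon_j$ with $a_j\neq0$. The membership condition $a_js_j\equiv0\pmod q$, together with $\gcd(q,s_j)=1$, forces $a_j\equiv0\pmod q$, so $a_j=\pm mq$ for some integer $m\geq1$, and then $\norma{\mu}=mq$. Counting over the $n$ choices of position $j$ and the two signs gives $N_{\mathcal L}(mq,n-1)=2n$ for each $m\geq1$, while $N_{\mathcal L}(k,n-1)=0$ whenever $k$ is not a positive multiple of $q$. Summing the resulting geometric series yields
\[
\vartheta_{\mathcal L}^{(n-1)}(z)=\sum_{m\geq1}2n\,z^{mq}=\frac{2nz^q}{1-z^q}.
\]

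There is no genuinely hard step here; the only point requiring care is the use of $\gcd(q,s_j)=1$, which is exactly where the lens space hypothesis (rather than a general lens orbifold) enters, ruling out nonzero single coordinates of absolute value smaller than $q$. As an alternative derivation one could invoke Theorem~\ref{thm:theta^l-rational}: for $\ell=n$ it collapses to $\vartheta_{\mathcal L}^{(n)}(z)=\Phi_{\mathcal L}^{(n)}(z)=1$, and for $\ell=n-1$ it gives $(1-z^q)^{-1}\bigl(\Phi_{\mathcal L}^{(n-1)}(z)+2nz^q\,\Phi_{\mathcal L}^{(n)}(z)\bigr)$, where the reduced count $\Phi_{\mathcal L}^{(n-1)}(z)$ vanishes precisely because no $a_j$ with $0<|a_j|<q$ can satisfy $a_js_j\equiv0\pmod q$.
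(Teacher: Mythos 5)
Your proof is correct, and your main argument takes a slightly different route from the paper's. The paper proves the lemma by specializing Theorem~\ref{thm:theta^l-rational} at $\ell=n$ and $\ell=n-1$, which reduces everything to the two finite reduced counts: $\Phi_{\mathcal L}^{(n)}(z)=1$ (only the zero vector has $n$ zero coordinates) and $\Phi_{\mathcal L}^{(n-1)}(z)=0$ (no $k\varepsilon_i$ with $0<|k|<q$ satisfies $ks_i\equiv0\pmod q$ when $\gcd(q,s_i)=1$); the geometric series is then invisible, being absorbed into the factor $(1-z^q)^{-(n-\ell)}$ supplied by the theorem. You instead enumerate the full lattice directly --- classifying all $\mu$ with $Z(\mu)=n-1$ as $\pm mq\,\varepsilon_j$, counting $2n$ vectors of one-norm $mq$ for each $m\geq1$, and summing the geometric series by hand --- which makes the lemma self-contained and independent of the rational-form machinery, at the modest cost of redoing a summation the theorem already packages. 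The arithmetic core (the lens space hypothesis $\gcd(q,s_j)=1$ forcing $q\mid a_j$, which is exactly where the statement fails for general lens orbifolds, cf.\ \eqref{eq:theta^n-1}) is identical in both. Your closing alternative derivation is precisely the paper's proof, and in fact your displayed specialization is the more careful one: the intermediate identity as printed in the paper omits the factor $(1-z^q)^{-1}$ in front of $\Phi_{\mathcal L}^{(n-1)}(z)+2nz^q\Phi_{\mathcal L}^{(n)}(z)$, though its stated conclusion is the correct one that you obtain.
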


\begin{proof}
Theorem~\ref{thm:theta^l-rational} ensures that $\vartheta_{\mathcal L}^{(n)}(z) = \Phi_{\mathcal L}^{(n)}(z)$ and $\vartheta_{\mathcal L}^{(n-1)}(z) = \Phi_{\mathcal L}^{(n-1)}(z) + 2nz^q\Phi_{\mathcal L}^{(n)}(z)$.
The assertions follow from $\Phi_{\mathcal L}^{(n)}(z)=1$ and $\Phi_{\mathcal L}^{(n-1)}(z)=0$.
Indeed, the first one is clear since the only element in $\Z^n$ with $n$ zero coordinates is the zero vector, which is always in $\mathcal L$.
The second one holds because a vector $\mu$ with $n-1$ zero coordinates has the form $ke_i$ for some $k\neq0$ and $1\leq i \leq n$, then $\mu$ is in $\mathcal L=\mathcal L(q;s_1,\dots,s_n)$ if and only if $ks_i\equiv 0 \pmod q$, which is equivalent to $k\equiv 0\pmod q$ because $\gcd(q,s_j)=1$ for all $j$.
\end{proof}

\section{Computational results}\label{sec:data}

In this section we include the consequences of the computational results.
This article is accompanied with the appendix \cite{Lauret-appendix}, which includes the computational results and the algorithms used.
More precisely, \cite{Lauret-appendix} shows the list of all $I$-isospectral families among $(2n-1)$-dimensional lens orbifolds for $2n-1=5$, $7$, $9$, $11$, $13$, $15$, $17$ with fundamental group of order $q\leq 200$, $120$, $75$, $59$, $44$, $32$, $27$ respectively.

Here, in each (odd) dimension, we will analyze the subsets $I$ for which there exist $I$-isospectralities among lens orbifolds with fundamental groups of low orders.
More precisely, for fixed $n$ satisfying $3\leq n\leq 6$ and each subset $I$ of $\{0,1,\dots,n-1\}$, Tables~\ref{table:summarydim5}--\ref{table:summarydim11} will indicate if there is a pair of $(2n-1)$-dimensional non-isometric $I$-isospectral lens spaces (or lens orbifolds) with fundamental group of low order.
The analogous tables in dimensions $13$, $15$ and $17$ are included in \cite{Lauret-appendix}.

When no example of $I$-isospectral pairs occurs, we will give a justification in case we are aware of it.
The most common justification for the non-existence of an $I$-isospectral pair is the `hole obstruction' in Proposition~\ref{prop:hole-obstruction}.
For instance, it is the case for every $I$ containing $1$ but not $0$, or containing $0$ and $2$ but not $1$.

The lowest dimension considered is $5$ since no examples appear in the computations for dimension $3$. 
Indeed, Ikeda and Yamamoto~\cite{IkedaYamamoto79}\cite{Yamamoto80} proved the non-existence of non-isometric $0$-isospectral $3$-dimensional lens spaces. 
Of couse, $1$-isospectrality is also impeded by Proposition~\ref{prop:hole-obstruction}. 
Moreover, recently Shams and Hunsicker~\cite{ShamsHunsicker17} extend this result to $3$-dimensional lens orbifolds.

\subsection{Dimension 5}
The computational results in dimension 5 included in \cite{Lauret-appendix} are summarized in Table~\ref{table:summarydim5}.
In this case, the hole obstruction (Proposition~\ref{prop:hole-obstruction}) is an impediment to the existence of $I$-isospectral lens orbifolds for $I=\{0,2\},\{1\},\{1,2\}$.

\begin{table}
\caption{Dimension 5}\label{table:summarydim5}
\begin{tabular}{|c|c|c|} \hline
\raisebox{-5pt}{\rule{0mm}{18pt}}&Lens spaces & Lens orbifolds\\ \hline \hline
    & \multicolumn{2}{c|}{$\emptyset$, $\{0\}$, $\{0,1,2\}$.} \\ \cline{2-3}
$\exists$
    &
    &$\{2\}$.\\ \hline
$\nexists$ Prop.~\ref{prop:hole-obstruction}
    & \multicolumn{2}{c|}{$\{0,2\}$, $\{1\}$, $\{1,2\}$.}\\ \hline
$\nexists$ Thm.~\ref{thm:[0,n-2]-isosp}
    & $\{0,1\}$.& \\ \hline
$\nexists$ Rem.~\ref{rem:[0,n-2]-isosporbifolds}
    & &$\{0,1\}$. \\ \hline
$\nexists$ Thm.~\ref{thm:n-2elements}
    &$\{2\}$. & \\ \hline
\end{tabular}
\end{table}

The existence of $I$-isospectral $5$-dimensional lens spaces for $I=\{0\}$ was well known.
Actually, Theorem~\ref{thm:mainIkeda} for $q=11$ (the smallest non-trivial example) gives a pair of $5$-dimensional $\{0\}$-isospectral lens spaces.
Furthermore, there are $\{0\}$-isospectral families of lens orbifolds which are not lens spaces.
An example with smallest fundamental group is given by the pair $$\{L(15;1,2,6), L(15;1,3,4)\}.$$
The classification of $5$-dimensional $\{0\}$-isospectral lens spaces or lens orbifolds seems to be a difficult task.

Recently, the existence of pairs of non-isometric $5$-dimensional $\{0,1,2\}$-isospectral lens spaces was shown.
This implies that the lens spaces are $p$-isospectral for all $p$ since the $p$-spectrum of a $5$-dimensional lens space coincides with its $(5-p)$-spectrum.
Many such examples have been found in \cite{LMR-onenorm}.
It would be interesting to have a classification of $5$-dimensional lens spaces that are $p$-isospectral for all $p$.
The computational results give evidences that such families come only in pairs. 
DeFord and Doyle~\cite{DeFordDoyle14} have made an important step on this problem in arbitrary dimension.
It is very interesting that all such examples are given by lens spaces, that is, there is no any example of non-isometric $5$-dimensional `pure' lens orbifolds $p$-isospectral for all $p$.

Theorem~\ref{thm:[0,n-2]-isosp} ensures the non-existence of $5$-dimensional $\{0,1\}$-isospectral lens spaces.
The data gives evidences that the same is true for $5$-dimensional lens orbifolds (see Remark~\ref{rem:[0,n-2]-isosporbifolds}).

The non-existence of $5$-dimensional $\{2\}$-isospectral lens spaces is proved in Theorem~\ref{thm:n-2elements}.
It was surprising the existence of $\{2\}$-isospectral lens orbifolds.
Conjecture~\ref{conj:5-dim[2]-isosp} claims its classification.

A very interesting fact is that all pairs of $5$-dimensional $\{2\}$-isospectral lens orbifolds have different isotropy types.
For example, $\{L(8;0,1,3),L(8;1,3,4)\}$ is the pair with smallest fundamental group in this situation.
On the one hand, the maximal isotropy group in $L(8;1,3,4)$ is given in the class of the point $(0,0,1)\in\C^3$ (see Remark~\ref{rem:lensinC^n} for notation).
Such group is isomorphic to the cyclic group of order $4$.
On the other hand, the class of the element $(1,0,0)$ in $L(8;0,1,3)$ has isotropy group isomorphic to the cyclic group of order $8$, which of course has maximal order.

\subsection{Dimension 7}

\begin{table}
\caption{Dimension 7}\label{table:summarydim7}
\begin{tabular}{|c|c|c|} \hline
\raisebox{-5pt}{\rule{0mm}{18pt}}&Lens spaces & Lens orbifolds\\ \hline \hline
    & \multicolumn{2}{c|}{$\emptyset$, $\{0\}$, $\{0,1\}$, $\{0,1,2,3\}$.} \\ \cline{2-3}
$\exists$
    &
    & $\{2\}$, $\{3\}$.  \\ \hline
$\nexists$ Prop.~\ref{prop:hole-obstruction}
    & \multicolumn{2}{c|}{$\{0,1,3\}$, $\{0,2\}$, $\{0,2,3\}$, $\{1\}$, }\\
(hole obstruction)
    &\multicolumn{2}{c|}{$\{1,2\}$, $\{1,2,3\}$, $\{1,3\}$. }
\\ \hline
$\nexists$ Thm.~\ref{thm:[0,n-2]-isosp}
    &$\{0,1,2\}$.& \\ \hline
$\nexists$ Rem.~\ref{rem:[0,n-2]-isosporbifolds}
    &&$\{0,1,2\}$. \\ \hline
$\nexists$ Thm.~\ref{thm:n-2elements}
    & $\{0,3\}$, $\{2,3\}$. &\\ \hline
$\nexists$ Rem.~\ref{rem:n-2elementsorbifolds}
    && $\{0,3\}$.\\ \hline
$\nexists$ ?
    & $\{2\}$, $\{3\}$.
    & $\{2,3\}$. \\ \hline
\end{tabular}
\end{table}

Table~\ref{table:summarydim7} shows a summary of the computational results in dimension 7.
Similarly as in the previous dimension, Proposition~\ref{prop:hole-obstruction} ensures the non-existence of $I$-isospectral pairs for subsets $I$ with simple holes, namely, $\{0,1,3\}$, $\{0,2\}$, $\{0,2,3\}$, $\{1\}$, $\{1,2\}$, $\{1,2,3\}$, $\{1,3\}$.

The existence of $I$-isospectral $7$-dimensional lens spaces for $I=\{0\}$ and  $\{0,1\}$ were already known.
Indeed, Theorem~\ref{thm:mainIkeda} for $q=13$ shows examples to these cases.
Moreover, there exist such examples for pure lens orbifolds.
For instance, any pair of $5$-dimensional $0$-isospectral lens spaces induces a pair, via Proposition~\ref{prop:adding0s}, of $7$-dimensional $0$-isospectral lens orbifolds, which are not lens spaces.
For example, see in \cite[Table 2]{Lauret-appendix} the pair for $q=11$.

The smallest example (minimal $q$) of $\{0,1\}$-isospectral $7$-dimensional lens orbifolds which are not lens spaces is
$$
\{L(36;1,3,5,17),\ L(36;1,3,7,11)\}.
$$
An interesting fact is that $L(36;1,3,5,17)$ is isometric to $L(36;1,7,11,15)$ by multiplying its parameters by $5$ and making a reordering.
In other words, the $5$-dimensional lens spaces $L(36;1,5,17)$ and $L(36;1,7,11)$ are isometric, but to add the parameter $3$ to each of them produces a pair of $7$-dimensional non-isometric $\{0,1\}$-isospectral lens orbifolds.

As in the previous dimension, $q=49$ is the smallest $q$ with examples of pairs among $7$-dimensional lens spaces $p$-isospectral for all $p$ (i.e.\ $\{0,1,2,3\}$-isospectral).
In \cite{LMR-onenorm} and \cite{LMR-survey} are many more examples.
Differently to the previous dimension, there exist $7$-dimensional pure lens orbifolds (i.e.\ lens orbifolds with non-trivial maximal isotropy group) $p$-isospectral for all $p$.
Moreover, such examples can be constructed by adding certain non-coprime parameter to a pair of $5$-dimensional lens spaces that are $p$-isospectral for all $p$.
For instance, the pair $\{L(49;1,6,15), L(49;1,6,20)\}$ of $5$-dimensional lens spaces is $p$-isospectral for all $p$, and it induces the pairs
\begin{align*}
&
\begin{cases}
L(49;1,6,15, 0), \\ L(49;1,6,20, 0),
\end{cases}
&&
\begin{cases}
L(49;1,6,15, 7), \\ L(49;1,6,20, 7),
\end{cases}
&&
\begin{cases}
L(49;1,6,15,14), \\ L(49;1,6,20,14),
\end{cases}
&&
\begin{cases}
L(49;1,6,15,21), \\ L(49;1,6,20,21).
\end{cases}
\end{align*}
which are lens orbifolds $p$-isospectral for all $p$.

Theorem~\ref{thm:[0,n-2]-isosp} ensures the non-existence of $\{0,1,2\}$-isospectral lens spaces.
It is expected that this fact holds for lens orbifolds (see Remark~\ref{rem:[0,n-2]-isosporbifolds}).
We prove in Theorem~\ref{thm:n-2elements} the non-existence of pairs of $\{2,3\}$-isospectral lens spaces.
There are evidences to think on the non-existence of $\{2,3\}$-lens orbifolds, and $I$-isospectral lens spaces for $I=\{2\},\{3\}$.

There are many pairs of $\{2\}$ and $\{3\}$-isospectral pure lens orbifolds, beginning from small choices of $q$.
Actually, one can see a pattern for the values of $q$ satisfying this condition (see Conjecture~\ref{conj:7-dim[2][3]-isosp}).

\subsection{Dimension 9}

\begin{table}
\caption{Dimension 9}\label{table:summarydim9}
\begin{tabular}{|c|c|c|} \hline
\raisebox{-5pt}{\rule{0mm}{18pt}}\raisebox{-5pt}{\rule{0mm}{18pt}}&Lens spaces & Lens orbifolds\\ \hline \hline
    & \multicolumn{2}{c|}{$\emptyset$, $\{0\}$, $\{0,1\}$, $\{0,1,2\}$, $\{0,1,2,3,4\}$.} \\ \cline{2-3}
$\exists$
    &
    & $\{2,3\}$, $\{2,3,4\}$, $\{3,4\}$, $\{4\}$. \\ \hline
$\nexists$ Prop.~\ref{prop:hole-obstruction}
    &\multicolumn{2}{c|}{$\{0,1,2,4\}$, $\{0,1,3\}$, $\{0,1,3,4\}$, $\{0,2\}$, $\{0,2,3\}$, }\\
(hole
    &\multicolumn{2}{c|}{$\{0,2,3,4\}$, $\{0,2,4\}$, $\{1\}$, $\{1,2\}$, $\{1,2,3\}$, } \\
obstruction)
    &\multicolumn{2}{c|}{$\{1,2,3,4\}$, $\{1,2,4\}$, $\{1,3\}$, $\{1,3,4\}$, $\{1,4\}$, $\{2,4\}$.} \\ \hline
$\nexists$ Thm.~\ref{thm:[0,n-2]-isosp}
    &$\{0,1,2,3\}$.& \\ \hline
$\nexists$ Rem.~\ref{rem:[0,n-2]-isosporbifolds}
    &&$\{0,1,2,3\}$. \\ \hline
$\nexists$ Thm.~\ref{thm:n-2elements}
    &$\{0,1,4\}$, $\{0,3,4\}$, $\{2,3,4\}$. & \\ \hline
$\nexists$ Rem.~\ref{rem:n-2elementsorbifolds}
    &&$\{0,1,4\}$, $\{0,3,4\}$.\\ \hline
    &\multicolumn{2}{c|}{$\{0,3\}$, $\{0,4\}$, $\{2\}$, $\{3\}$.}\\ \cline{2-3}
$\nexists$ ?
    & $\{2,3\}$, $\{3,4\}$, $\{4\}$.
    & \\ \hline
\end{tabular}
\end{table}

The computational results in dimension $9$ for $q\leq 75$ are in \cite[Table~3]{Lauret-appendix}, while their summary is in Table~\ref{table:summarydim9}.
The hole obstruction explains the non-existence of $I$-isospectral lens orbifolds for $16$ cases among $32$.

Theorem~\ref{thm:mainIkeda} does not produce any example in dimension $9$.
However, there exist examples of $I$-isospectral pair of lens spaces for $I=\{0\}$, $\{0,1\}$, and $\{0,1,2\}$.
The corresponding minimal values of $q$ are $16$, $17$ and $27$ respectively.

The pair of $\{0,1,2,3,4\}$-isospectral lens spaces with smallest $q$ appears in $q=64$.
However, $q=49$ is the smallest $q$ such that there is a pair of $\{0,1,2,3,4\}$-isospectral lens orbifolds.
Furthermore, one can see that these examples can be constructed from those in lower dimensions, in a similar way as was explained in dimension $7$.

Theorem~\ref{thm:[0,n-2]-isosp} ensures the non-existence of $\{0,1,2,3\}$-isospectral lens spaces (see also Remark~\ref{rem:[0,n-2]-isosporbifolds} for lens orbifolds).
Theorem~\ref{thm:n-2elements} guarantees the non-existence of pairs of $I$-isospectral lens spaces for $I=\{0,1,4\}, \{0,3,4\}, \{2,3,4\}$ (see Remark~\ref{rem:n-2elementsorbifolds} for $I$-isospectral lens orbifolds for $I=\{0,1,4\}, \{0,3,4\}$).

We do not know a reason for the non-existence of the cases indicated in the last two rows in Table~\ref{table:summarydim9}.
Also, it is somewhat surprising to see the existence of $\{2,3,4\}$-isospectral lens orbifolds.

\subsection{Dimension 11}

The computational results in dimension $11$ for $q\leq 59$ are in \cite[Table~4]{Lauret-appendix}, while their summary can be found in Table~\ref{table:summarydim11}.
In this case, there are $36$ non-existence cases among $64$ which are explained by the hole obstruction.

\begin{table}
\caption{Dimension 11}\label{table:summarydim11}
\begin{tabular}{|c|c|c|} \hline
\raisebox{-5pt}{\rule{0mm}{18pt}}&Lens spaces & Lens orbifolds\\ \hline \hline
    & \multicolumn{2}{c|}{$\emptyset$, $\{0\}$, $\{0,1\}$, $\{0,1,2\}$, $\{0,1,2,3\}$, $\{0,1,2,3,4,5\}$, $\{4,5\}$.} \\ \cline{2-3}
$\exists$
    &
    & $\{0,3\}$, $\{0,5\}$, $\{2,3,4\}$,\\
    &
    & $\{3\}$, $\{3,4,5\}$, $\{4\}$, $\{5\}$.
\\ \hline
$\nexists$ Thm.~\ref{thm:[0,n-2]-isosp}
    &$\{0,1,2,3,4\}$.  &\\ \hline
$\nexists$ Rem.~\ref{rem:[0,n-2]-isosporbifolds}
    &&$\{0,1,2,3,4\}$.  \\ \hline
    & $\{0,1,2,5\}$, $\{0,1,4,5\}$,& \\
$\nexists$ Thm.~\ref{thm:n-2elements}
    & $\{0,3,4,5\}$, $\{2,3,4,5\}$.&
\\ \hline
    && $\{0,1,2,5\}$, $\{0,1,4,5\}$, \\
$\nexists$ Rem.~\ref{rem:n-2elementsorbifolds}
    &&$\{0,3,4,5\}$. \\ \hline
    & \multicolumn{2}{c|}{$\{0,4\}$, $\{0,1,4\}$, $\{0,1,5\}$, $\{0,3,4\}$, $\{0,4,5\}$, } \\
    & \multicolumn{2}{c|}{$\{2\}$, $\{2,3\}$, $\{2,5\}$, $\{3,4\}$. } \\
    \cline{2-3}
$\nexists$ ?
    & $\{0,3\}$, $\{0,5\}$, $\{2,3,4\}$,
    & $\{2,3,4,5\}$.   \\
    & $\{3\}$, $\{3,4,5\}$, $\{4\}$, $\{5\}$.
    &\\ \hline
\end{tabular}
\end{table}

Theorem~\ref{thm:mainIkeda} for $q=17$ gives examples of $11$-dimensional $I$-isospectral lens spaces for $I=\{0\}$, $\{0,1\}$ and $\{0,1,2\}$.
Furthermore, there exist pairs of $11$-dimensional non-isometric $\{0,1,2,3\}$-isospectral lens spaces.
The smallest value of $q$ for such example is $52$.

Although there is no pair of $11$-dimensional lens spaces that are $p$-isospectral for all $p$ (i.e.\ $\{0,1,2,3,4,5\}$-isospectral) in \cite[Table~4]{Lauret-appendix}, such examples do exist.
By computations made by the author related to the articles \cite{LMR-onenorm} and \cite{LMR-survey}, the examples with minimum value of $q$ is when $q=100$.
They are
\begin{equation*}
\begin{cases}
L(100;1,11,21,31,41,61),\\
L(100;1,11,21,31,41,81),
\end{cases}
\qquad
\begin{cases}
L(100;1,11,21,41,71,81),\\
L(100;1,11,21,41,51,81).
\end{cases}
\end{equation*}
One can check by using \cite[Thm.~1]{DeFordDoyle14} that they are $p$-isospectral for all $p$.

As it was expected, there are pairs of $11$-dimensional lens orbifolds $p$-isospectral for all $p$  with fundamental group of order $49$.
They can be constructed by using the previous examples of lens spaces $p$-isospectral for all $p$ of lower dimension.

Similarly as in the previous dimensions, Theorem~\ref{thm:[0,n-2]-isosp} and Theorem~\ref{thm:n-2elements} explains the non-existence of a couple of cases for $11$-dimensional lens spaces, and Remark~\ref{rem:[0,n-2]-isosporbifolds} and Remark~\ref{rem:n-2elementsorbifolds} say something about the lens orbifold case.

There are $15$ (resp.\ $10$) cases among $64$ for which we do not know the existence of $I$-isospectral lens spaces (resp.\ lens orbifolds).

\subsection{Conclusions and remarks}

In the previous tables, for a fixed dimension $2n-1$, there are five possibilities for each $I$ among the $2^n$ subsets of $\{0,\dots,n-1\}$, namely,
\begin{itemize}
\item there is an $I$-isospectral family in the computational results,
\item the existence of an $I$-isospectral family is obstructed by Proposition~\ref{prop:hole-obstruction},
\item the existence of an $I$-isospectral family is obstructed by Theorem~\ref{thm:[0,n-2]-isosp} or Remark~\ref{rem:[0,n-2]-isosporbifolds},
\item the existence of an $I$-isospectral family is obstructed by Theorem~\ref{thm:n-2elements} or Remark~\ref{rem:n-2elementsorbifolds},
\item there is no any $I$-isospectral family in the computational results and this cannot be explained by the above reasons.
\end{itemize}
Table~\ref{table:numbersubsets} shows how the number of subsets $I$ in each possible situation changes according to the growth of dimension up to $17$.

We recall that in Tables~\ref{table:summarydim5}--\ref{table:summarydim11}, a subset $I$ in the row `$\nexists$ ?' means that the computational results show no $I$-isospectral family which cannot be explained by the reasons above. 
Probably there exists such a family with fundamental group of higher order than the considered by the computer.
The author believes that there should exist, in dimensions $13$, $15$ and $17$, more subsets $I$ satisfying that there is an $I$-isospectral pair.

\begin{table}
\caption{It is shown the number of subsets $I$ of $\{0,\dots,n-1\}$ such that there is no $(2n-1)$-dimensional $I$-isospectral family of lens spaces and orbifolds in the computational results.
In a fixed dimension, the left (resp.\ right) column considers lens spaces (resp.\ lens orbifolds).
The third row indicates the cases explained in Section~\ref{sec:evidencedfacts} (i.e.\ Thm.~\ref{thm:[0,n-2]-isosp}, Rem.~\ref{rem:[0,n-2]-isosporbifolds}, Thm~\ref{thm:n-2elements} or Rem.~\ref{rem:n-2elementsorbifolds}).
}\label{table:numbersubsets}
\begin{center}
\begin{tabular}{c|cc|cc|cc|cc|cc|cc|cc|}
existence or reason &\multicolumn{14}{c|}{dimension}\\
for the non-existence
&\multicolumn{2}{c|}{5} &\multicolumn{2}{c|}{7} &\multicolumn{2}{c|}{9} &\multicolumn{2}{c|}{11} &\multicolumn{2}{c|}{13} &\multicolumn{2}{c|}{15} &\multicolumn{2}{c|}{17} \\ \hline\hline
$\exists$
    &3&4& 4&6& 5&9& 7&14& 9&18& 6&23& 7&27\\ \hline
$\nexists$ Prop.~\ref{prop:hole-obstruction}
    &\multicolumn{2}{c|}{3} & \multicolumn{2}{c|}{7}& \multicolumn{2}{c|}{16} &\multicolumn{2}{c|}{36} &\multicolumn{2}{c|}{79} &\multicolumn{2}{c|}{170} &\multicolumn{2}{c|}{361} \\ \hline
$\nexists$ Sect.~\ref{sec:evidencedfacts}
    &2&1& 3&2& 4&3& 5&4& 6&5& 7&6& 8&7\\ \hline
$\nexists$ ?
    &0&0& 2&1& 7&4& 16&10& 34&26& 73&57& 136&117\\ \hline \hline
Total & \multicolumn{2}{c|}{8}& \multicolumn{2}{c|}{16}& \multicolumn{2}{c|}{32} &\multicolumn{2}{c|}{64}& \multicolumn{2}{c|}{128}& \multicolumn{2}{c|}{256}& \multicolumn{2}{c|}{512}
\end{tabular}
\end{center}
\end{table}

We end this section with other observations.

\begin{remark}\label{rem:isotropytype}
At the end of the subsection corresponding to dimension $5$, we observed that the singular points in the lens orbifolds of any $\{2\}$-isospectral pair have different isotropy type.
In higher dimensions, a similar situation is repeated many times for $I$-isospectral families with $0\not\in I$.
More precisely, when $L(q;s_1,\dots,s_n)$ and $L(q;s_1',\dots,s_n')$ are $p$-isospectral for some $0<p\leq n-1$ and not $0$-isospectral, we usually have that $$
\{\!\{\gcd(q,s_j):1\leq j\leq n\}\!\} \neq \{\!\{\gcd(q,s_j'):1\leq j\leq n\}\!\}.
$$
However, this is not always the case.
For instance, the family
\begin{align*}
\{L(27;1, 3, 8, 10),\; L(27;1, 6, 8, 10),\; L(27;1, 8, 10, 12)\}
\end{align*}
is $\{2\}$-isospectral and $\{L(12;1, 1, 1, 3, 5),\; L(12;1, 1, 3, 5, 5)\}$ is $\{4\}$-isospectral.

On the other hand, it is expected that $0$-isospectral lens orbifolds have the same isotropy types (see Remark~\ref{rem:L(q/d;s)orbifolds}).
\end{remark}

\begin{remark}\label{rem:p-isospp>0}
Gornet and McGowan were interested on the existence of lens spaces $p_0$-isospec\-tral for some $p_0>0$ but not $p$-isospectral for all $0\leq p<p_0$.
We can see in Table~\ref{table:summarydim5} that such example does not exist in dimension $5$.
Indeed, in this case, $1$-isospectrality implies $0$-isospectrality by Proposition~\ref{prop:hole-obstruction} and $2$-isospectrality implies $p$-isospectrality for all $p$ by Theorem~\ref{thm:n-2elements}.
Also, Table~\ref{table:summarydim7} says that we do not know if such examples can exist in dimension $7$ since the problem of the existence of an $I$-isospectral pair is open for $I=\{2\}$ and $\{3\}$.
A similar situation happens in dimension $9$ according to Table~\ref{table:summarydim9}.
In dimension $11$ there exists a pair of $\{4,5\}$-isospectral lens spaces, in particular they are $4$-isospectral but not $p$-isospectral for all $0\leq p\leq 3$.
In \cite{Lauret-appendix} there are examples in dimension $13$ of $\{4\}$-isospectrality and $\{6\}$-isospectrality among lens spaces.

The same existence problem considering lens orbifolds is quite different since it is true from dimension $5$.
Moreover, there are many more examples of $I$-isospectral pure lens orbifolds with $0\not\in I$ than for lens spaces.
\end{remark}

\section{Evidenced facts}\label{sec:evidencedfacts}

In this section we list many facts observed in the computational results summarized in the previous section.
We will prove many of them by using the tools from Section~\ref{sec:preliminaries}.
We will also conjecture some other facts evidenced by the computational results.

\subsection{New examples increasing the dimension} \label{subsec:increasing-dimension}

We easily see from the data that one can add (or delete) $0$'s to the parameters of $0$-isospectral lens orbifolds and obtains $0$-isospectral lens orbifolds of higher (lower) dimension.

\begin{proposition}\label{prop:adding0s}
Let $L_1 =L(q;s_1,\dots,s_n)$, $L_2=L(q;s_1,\dots,s_n,0,\dots,0)$, $L_1'=L(q;s_1',\dots,s_n')$, and $L_2'=L(q;s_1',\dots,s_n',0,\dots,0)$,
where the parameter $0$ is repeated $m$ times in $L_2$ and $L_2'$.
Then, $L_1$ and $L_1'$ are $0$-isospectral if and only if $L_2$ and $L_2'$ are $0$-isospectral.
\end{proposition}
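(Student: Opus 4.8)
The plan is to reduce everything to the one-norm generating functions via Theorem~\ref{thm:charact[0,p]}, which asserts that two lens orbifolds are $0$-isospectral precisely when their functions $\vartheta_{\mathcal L}(z)$ coincide. Write $\mathcal L_1$, $\mathcal L_2$, $\mathcal L_1'$, $\mathcal L_2'$ for the congruence lattices of $L_1$, $L_2$, $L_1'$, $L_2'$. Then the assertion becomes the equivalence of $\vartheta_{\mathcal L_1}(z)=\vartheta_{\mathcal L_1'}(z)$ and $\vartheta_{\mathcal L_2}(z)=\vartheta_{\mathcal L_2'}(z)$, so it suffices to understand how passing from $L_1$ to $L_2$ transforms the one-norm generating function.

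First I would identify the congruence lattice of $L_2$. By the definition \eqref{eq1:conglattice}, a vector $(a_1,\dots,a_n,b_1,\dots,b_m)\in\Z^{n+m}$ lies in $\mathcal L_2$ if and only if $a_1s_1+\dots+a_ns_n+b_1\cdot 0+\dots+b_m\cdot 0\equiv0\pmod q$; since the appended parameters are $0$, this is simply $a_1s_1+\dots+a_ns_n\equiv 0\pmod q$ with no restriction on $b_1,\dots,b_m$. Hence $\mathcal L_2=\mathcal L_1\times\Z^m$ as a sublattice of $\Z^{n+m}$, and likewise $\mathcal L_2'=\mathcal L_1'\times\Z^m$.

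The key step is that the one-norm is additive under this direct product: for $\mu=(\alpha,\beta)$ with $\alpha\in\mathcal L_1$ and $\beta\in\Z^m$ one has $\norma{\mu}=\norma{\alpha}+\norma{\beta}$. Counting by one-norm therefore factors as a Cauchy product, giving
\begin{equation*}
\vartheta_{\mathcal L_2}(z)=\vartheta_{\mathcal L_1}(z)\cdot\vartheta_{\Z^m}(z),
\end{equation*}
and the same for the primed lattices. A direct count gives $\vartheta_{\Z}(z)=1+\sum_{k\geq1}2z^k=\frac{1+z}{1-z}$, whence $\vartheta_{\Z^m}(z)=\bigl(\frac{1+z}{1-z}\bigr)^m$.

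Finally, I would cancel this common factor. Since $\vartheta_{\Z^m}(z)=\bigl(\frac{1+z}{1-z}\bigr)^m$ is a power series with constant term $1$, it is a unit in the integral domain $\Z[[z]]$, so multiplication by it is injective on formal power series. Therefore $\vartheta_{\mathcal L_2}(z)=\vartheta_{\mathcal L_2'}(z)$ holds if and only if $\vartheta_{\mathcal L_1}(z)=\vartheta_{\mathcal L_1'}(z)$, which by Theorem~\ref{thm:charact[0,p]} is exactly the desired equivalence. The only point requiring care is the legitimacy of the cancellation, and this is guaranteed by the common factor being a unit; everything else is a routine computation with the congruence lattices.
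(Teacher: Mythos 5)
Your proof is correct, but it takes a genuinely different route from the paper's. The paper argues on the generating-function side: from the explicit product formula \eqref{eq:F_L^0abreviado} it reads off in one line that appending $m$ zero parameters multiplies the function by a fixed factor, $\widetilde F_{L_2}^0(z)=(z-1)^{-2m}\,\widetilde F_{L_1}^0(z)$, and then cancels this common factor, invoking Remark~\ref{rem:F_L^0} to translate equality of $\widetilde F^0$ into $0$-isospectrality. You instead work on the congruence-lattice side: you identify $\mathcal L_2=\mathcal L_1\times\Z^m$ (correct, since the appended parameters impose no congruence condition), use additivity of the one-norm and a Cauchy product to get $\vartheta_{\mathcal L_2}(z)=\vartheta_{\mathcal L_1}(z)\bigl(\tfrac{1+z}{1-z}\bigr)^m$, and cancel the factor, which is legitimate since it is a power series with constant term $1$, hence a unit in $\Z[[z]]$; Theorem~\ref{thm:charact[0,p]}, which the paper states for lens orbifolds, then yields the equivalence. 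The two arguments are structurally parallel --- each shows that adding zeros multiplies the relevant spectral invariant by a fixed invertible factor --- and they are mutually consistent: since $F_L^0(z)=-z^{-1}+z^{-1}(1-z^2)^{-(n-1)}\vartheta_{\mathcal L}(z)$, your factor $\bigl(\tfrac{1+z}{1-z}\bigr)^m$ combined with the change $(1-z^2)^{-(n-1)}\to(1-z^2)^{-(n+m-1)}$ reproduces exactly the paper's $(1-z)^{-2m}$. What each approach buys: the paper's computation is shorter, being immediate from Ikeda's closed formula, while yours makes the combinatorics transparent at the lattice level and sits naturally in the one-norm framework the paper uses elsewhere --- in particular it clarifies, through the refined functions $\vartheta_{\mathcal L}^{(\ell)}$ and the weights $A_p^{(\ell)}$ in Theorem~\ref{thm:F_L^p}, why the analogous statement fails for $p>0$, as the paper remarks after this proposition.
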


\begin{proof}
By \eqref{eq:F_L^0abreviado}, we have that
\begin{align*}
\widetilde F_{L_2}^0(z)
&= \sum_{h=0}^{q-1} \left(\prod_{j=1}^n\frac{1}{(z-\xi_q^{hs_j})(z-\xi_q^{-hs_j})}\right) \frac{1}{(z-1)^{2m}}= \frac{1}{(z-1)^{2m}} \widetilde F_{L_1}^0(z),
\end{align*}
and similarly for $L_1'$ and $L_2'$.
It follows immediately that $\widetilde F_{L_1}^0(z)=\widetilde F_{L_1'}^0(z)$ if and only if $\widetilde F_{L_2}^0(z)=\widetilde F_{L_2'}^0(z)$, and the proof is complete by Remark~\ref{rem:F_L^0}.
\end{proof}

Note that, the resulting spaces after adding $0$'s are always lens orbifolds with singular points, that is, they cannot be lens spaces.
For instance, $L(11;1,2,3)$ and $L(11;1,2,4)$ are $5$-dimensional non-isometric lens spaces and $L(11;1,2,3,0,\dots,0)$ and $L(11;1,2,4,0\dots,0)$, where $m$ is the number of zero coordinates added to each of them, are $(5+2m)$-dimensional non-isometric $0$-isospectral lens orbifolds.

\begin{remark}
We note that to add zero parameters to $p$-isospectral pairs with $p>0$ does not return $p$-isospectral pairs.
For example, $L(8;0,1,3)$ and $L(8;1,3,4)$ are $2$-isospectral, though $L(8;0,0,1,3)$ and $L(8;0,1,3,4)$ are not $2$-isospectral.
Moreover, there is no any $p$-isospectrality among $7$-dimensional lens spaces with fundamental group of order $q=8$.
\end{remark}

\subsection{New examples for increasing $q$}\label{subsec:increasing-q}

The next result gives a necessary condition to construct pairs of lens spaces covered by a pair of $0$-isospectral lens spaces.

\begin{theorem}\label{thm:L(q/d,s)}
Let $L=L(q;s)$ and $L=L(q;s')$ be $(2n-1)$-dimensional lens spaces.
If $L(q;s)$ y $L(q;s')$ are $0$-isospectral, then $L(q_1;s)$ y $L(q_1;s')$ are also $0$-isospectral for every positive divisor $q_1$ of $q$.
\end{theorem}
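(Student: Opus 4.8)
The plan is to move everything to the one-norm generating functions and then exploit the analytic structure (pole locations) that the lens space hypothesis $\gcd(q,s_j)=1$ forces. By Theorem~\ref{thm:charact[0,p]}, for any modulus $m$ the spaces $L(m;s)$ and $L(m;s')$ are $0$-isospectral if and only if $\vartheta_{\mathcal L(m;s)}(z)=\vartheta_{\mathcal L(m;s')}(z)$, so it suffices to prove $\vartheta_{\mathcal L(q_1;s)}=\vartheta_{\mathcal L(q_1;s')}$ for every $q_1\mid q$. A short computation combining the formula for $F_L^0(z)$ displayed after Theorem~\ref{thm:F_L^p} with \eqref{eq:F_L^0abreviado} gives $q\,\vartheta_{\mathcal L(q;s)}(z)=(1-z^2)^n\,\widetilde F_{L(q;s)}^0(z)$, that is
\[
q\,\vartheta_{\mathcal L(q;s)}(z)=\sum_{h=0}^{q-1}\;\prod_{j=1}^n\frac{1-z^2}{(z-\xi_q^{hs_j})(z-\xi_q^{-hs_j})}.
\]
The first step is to regroup this sum by the order $e$ of the root of unity $\xi_q^h$. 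Writing $\mu_e^*$ for the set of primitive $e$-th roots of unity, I set
\[
T_{e,s}(z)=\sum_{\omega\in\mu_e^*}\;\prod_{j=1}^n\frac{1-z^2}{(z-\omega^{s_j})(z-\omega^{-s_j})},
\]
so that $q\,\vartheta_{\mathcal L(q;s)}(z)=\sum_{e\mid q}T_{e,s}(z)$ and, more importantly, the very same terms reassemble as $q_1\,\vartheta_{\mathcal L(q_1;s)}(z)=\sum_{e\mid q_1}T_{e,s}(z)$ for \emph{every} divisor $q_1\mid q$, since the primitive $e$-th roots of unity form a fixed set lying in $\mu_{q_1}$ exactly when $e\mid q_1$.

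The crux is a pole-localization, and this is precisely where the lens space hypothesis enters. Because $\gcd(q,s_j)=1$, for every $e\mid q$ and every $\omega\in\mu_e^*$ the numbers $\omega^{\pm s_j}$ are again \emph{primitive} $e$-th roots of unity; hence each summand of $T_{e,s}(z)$, and therefore $T_{e,s}(z)$ itself, can only have poles at primitive $e$-th roots of unity. As these pole sets are pairwise disjoint for distinct $e$, I would argue as follows. The hypothesis gives $\vartheta_{\mathcal L(q;s)}=\vartheta_{\mathcal L(q;s')}$, i.e.\ $\sum_{e\mid q}\bigl(T_{e,s}-T_{e,s'}\bigr)=0$ identically. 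Fixing $e_0\mid q$ and a primitive $e_0$-th root $\omega$, every term with $e\neq e_0$ is holomorphic at $\omega$, so $T_{e_0,s}-T_{e_0,s'}$ must be holomorphic at $\omega$ as well; running over all such $\omega$ shows $T_{e_0,s}-T_{e_0,s'}$ has no poles, hence is a polynomial. Since each factor tends to $-1$ as $z\to\infty$, both $T_{e_0,s}$ and $T_{e_0,s'}$ tend to $(-1)^n\phi(e_0)$, so their difference vanishes at infinity and the polynomial is identically zero. Thus $T_{e,s}=T_{e,s'}$ for every $e\mid q$.

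With this in hand the conclusion is immediate: for any $q_1\mid q$,
\[
q_1\,\vartheta_{\mathcal L(q_1;s)}(z)=\sum_{e\mid q_1}T_{e,s}(z)=\sum_{e\mid q_1}T_{e,s'}(z)=q_1\,\vartheta_{\mathcal L(q_1;s')}(z),
\]
so $L(q_1;s)$ and $L(q_1;s')$ are $0$-isospectral by Theorem~\ref{thm:charact[0,p]}. I expect the main obstacle to be the bookkeeping in the regrouping step, namely verifying cleanly that the order-$e$ part $T_{e,s}$ is the common building block of $\vartheta_{\mathcal L(q_1;s)}$ for all $q_1$ with $e\mid q_1\mid q$, together with making the pole-localization fully rigorous: one must check that the factor $1-z^2$ cannot cancel a pole at a primitive $e$-th root when $e\geq 3$, and treat the small cases $e=1,2$ (poles only at $z=1,-1$) separately. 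The coprimality $\gcd(q,s_j)=1$ is essential and is exactly what breaks down for general lens orbifolds, consistent with the fact that the analogous statement fails for $p>0$.
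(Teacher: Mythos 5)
Your proof is correct and takes essentially the same route as the paper's: the paper likewise rewrites $\widetilde F_{L}^0(z)=\sum_{d\mid q}\sum_{h\in\Z_d^\times}\prod_{j=1}^n\bigl((z-\xi_d^{hs_j})(z-\xi_d^{-hs_j})\bigr)^{-1}$, uses $\gcd(q,s_j)=1$ to localize the poles of the $d$-th block at primitive $d$-th roots of unity, deduces blockwise equality of the summands from the equality $\widetilde F_{L}^0=\widetilde F_{L'}^0$, and then resums over $d\mid q_1$. Your only deviations are cosmetic: carrying the factor $(1-z^2)^n$ (working with $q\,\vartheta_{\mathcal L}$ instead of $\widetilde F_L^0$) and concluding $T_{e,s}=T_{e,s'}$ via ``rational, pole-free, vanishing at infinity, hence zero'' rather than by matching singular parts.
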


\begin{proof}
By \eqref{eq:F_L^0abreviado}, we have that
\begin{align}\label{eq:tildeF_L^0(z)-suma-divisores}
\widetilde F_{L}^0(z)
&= \sum_{d\mid q} \sum_{0\leq h<q\atop \gcd(h,q)=q/d} \prod_{j=1}^n\frac{1}{(z-\xi_q^{hs_j})(z-\xi_q^{-hs_j})} \\
&= \sum_{d\mid q} \sum_{h\in\Z_d^\times} \prod_{j=1}^n\frac{1}{(z-\xi_d^{hs_j})(z-\xi_d^{-hs_j})}.
\notag
\end{align}
Clearly, the poles of $\widetilde F_L^0(z)$ are primitive roots of unity of degree $d$ for some divisor $d$ of $q$.

We now assume that $L=L(q;s)$ and $L=L(q;s')$ are $0$-isospectral lens spaces, thus $\gcd(s_j,q)=\gcd(s_j',q)=1$ for all $1\leq j\leq n$.
Since $\widetilde F_{L}^0(z)= \widetilde F_{L'}^0(z)$, these functions have the same set of poles and, at each pole, they have the same order and singular part.
Hence, the sum of the singular parts of the poles at $d$-th primitive root of unity (i.e.\ $\xi_d^h$ for $h\in\Z_d^\times$) coincide, thus
$$
\sum_{h\in\Z_d^\times} \prod_{j=1}^n\frac{1}{(z-\xi_d^{hs_j})(z-\xi_d^{-hs_j})}
=
\sum_{h\in\Z_d^\times} \prod_{j=1}^n\frac{1}{(z-\xi_d^{hs_j'})(z-\xi_d^{-hs_j'})}
$$
for any divisor $d$ of $q$.
This implies that
$$
\widetilde F_{L(q_1,s)}^0(z) =
 \sum_{d\mid q_1} \sum_{h\in\Z_d^\times} \prod_{j=1}^n\frac{1}{(z-\xi_d^{hs_j})(z-\xi_d^{-hs_j})}
 = \widetilde F_{L(q_1,s')}^0(z) ,
$$
which completes the proof.
\end{proof}

Since lens spaces have cyclic fundamental group, their covering spaces are parameterized by the set of positive divisors of the order of the fundamental group.
More precisely, if $d$ is a positive divisor of $q$, then the $d$-covering space of $L(q;s_1,\dots,s_n)$  is $L(q/d;s_1,\dots,s_n)$.

Consequently, Theorem~\ref{thm:L(q/d,s)} says that if two lens spaces are $0$-isospectral, then their covering spaces of the same order are also $0$-isospectral.

\begin{remark}\label{rem:L(q/d;s)orbifolds}
The numerical evidences strongly support the claim that Theorem~\ref{thm:L(q/d,s)} is also valid for lens orbifolds.
However, the proof is not easy since the singular part of $\widetilde F_L^0(z)$ becomes unmanageable when $L$ is not a lens space.
Indeed, since $s_1,\dots,s_n$ are not necessarily coprime to $q$, it could be roots of unity of different orders in a same term in \eqref{eq:tildeF_L^0(z)-suma-divisores}.

If Theorem~\ref{thm:L(q/d,s)} were valid for lens orbifolds, then $0$-isospectral lens orbifolds $L(q;s_1,\dots,s_n)$ and $L(q;s_1',\dots,s_n')$ would satisfy
\begin{equation}\label{eq:conditionisotropy}
\{\!\{\gcd(q,s_j):1\leq j\leq n\}\!\} = \{\!\{\gcd(q,s_j'):1\leq j\leq n\}\!\}.
\end{equation}
These multisets (sets with multiplicities) are very important since they determine the isotropy types of the singular points of the corresponding lens orbifolds (see Subsection~\ref{subsec:lensspaces}).
\end{remark}

\subsection{$I$-isospectrality for $I$ having $n-1$ elements}\label{subsec:[0,n-2]-isosp}

One can observe from the data that there is no pair of $(2n-1)$-dimensional non-isometric lens orbifolds that are $I$-isospectral, if $I$ has $n-1$ elements. Indeed, the hole obstruction (Proposition~\ref{prop:hole-obstruction}) proves this fact for all cases except $I=\{0,\dots,n-2\}$ (i.e.\ $p$-isospectral for all $0\leq p\leq n-2$ but not $(n-1)$-isospectral).
The next result ensures the non-existence of them for lens spaces.

\begin{theorem}\label{thm:[0,n-2]-isosp}
Let $L=L(q;s)$ and $L=L(q;s')$ be $(2n-1)$-dimensional lens spaces.
If $L$ and $L'$ are $p$-isospectral for all $p$ satisfying $0\leq p\leq n-2$, then $L$ and $L'$ are also $(n-1)$-isospectral.
Consequently, there is no pair of non-isometric $(2n-1)$-dimensional $\{0,\dots,n-2\}$-isospectral lens spaces. 
\end{theorem}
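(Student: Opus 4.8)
The plan is to reduce the statement to a finite linear-algebra problem for the component generating functions $\vartheta_{\mathcal L}^{(\ell)}(z)$ and to exploit the fact that, for lens spaces, two of these components are universal. Write $\mathcal L$ and $\mathcal L'$ for the congruence lattices of $L$ and $L'$, and set $g_\ell(z)=\vartheta_{\mathcal L}^{(\ell)}(z)-\vartheta_{\mathcal L'}^{(\ell)}(z)$ for $0\leq\ell\leq n$. By Theorem~\ref{thm:charact[0,p]} applied with $p_0=n-2$, the hypothesis that $L$ and $L'$ are $p$-isospectral for all $0\leq p\leq n-2$ is equivalent to the $n-1$ identities
\begin{equation*}
\sum_{\ell=0}^{n}\ell^h\,g_\ell(z)=0,\qquad 0\leq h\leq n-2.
\end{equation*}
The goal is to deduce that in fact $g_\ell(z)=0$ for every $\ell$, since the last assertion of Theorem~\ref{thm:charact[0,p]} then yields $p$-isospectrality for all $p$, in particular $(n-1)$-isospectrality.

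The key input is Lemma~\ref{lem:theta^n-1}: because $L$ and $L'$ are genuine lens spaces, $\vartheta_{\mathcal L}^{(n)}(z)=\vartheta_{\mathcal L'}^{(n)}(z)=1$ and $\vartheta_{\mathcal L}^{(n-1)}(z)=\vartheta_{\mathcal L'}^{(n-1)}(z)=2nz^q/(1-z^q)$, so that $g_{n-1}(z)=g_n(z)=0$ identically. Substituting this into the identities above removes the two top terms and leaves the square system
\begin{equation*}
\sum_{\ell=0}^{n-2}\ell^h\,g_\ell(z)=0,\qquad 0\leq h\leq n-2,
\end{equation*}
of $n-1$ equations in the $n-1$ unknowns $g_0(z),\dots,g_{n-2}(z)$. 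For each fixed $z$ the coefficient matrix $(\ell^h)_{0\leq h,\ell\leq n-2}$ is the transpose of the Vandermonde matrix on the distinct nodes $0,1,\dots,n-2$, hence invertible, which forces $g_0(z)=\dots=g_{n-2}(z)=0$.

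Combined with $g_{n-1}=g_n=0$, this gives $\vartheta_{\mathcal L}^{(\ell)}(z)=\vartheta_{\mathcal L'}^{(\ell)}(z)$ for all $0\leq\ell\leq n$, so $L$ and $L'$ are $p$-isospectral for all $p$ by Theorem~\ref{thm:charact[0,p]}; in particular they are $(n-1)$-isospectral, proving the first assertion. The final assertion is then immediate from Definition~\ref{notation:I-isospectral}: a $\{0,\dots,n-2\}$-isospectral pair would by definition fail $(n-1)$-isospectrality, which we have just shown is impossible. I expect the only genuinely delicate point to be the role of Lemma~\ref{lem:theta^n-1}: it is precisely the lens-space hypothesis that supplies the two extra constraints $g_{n-1}=g_n=0$ needed to balance the count of equations against unknowns and render the Vandermonde matrix square. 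For general lens orbifolds $\vartheta_{\mathcal L}^{(n-1)}(z)$ is no longer universal, the system becomes underdetermined, and the argument breaks down—consistent with the orbifold analogue being only conjectured in Remark~\ref{rem:[0,n-2]-isosporbifolds}.
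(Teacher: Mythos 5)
Your proposal is correct and follows essentially the same route as the paper's proof: apply Theorem~\ref{thm:charact[0,p]} with $p_0=n-2$, use Lemma~\ref{lem:theta^n-1} (the lens-space hypothesis) to kill the $\ell=n-1$ and $\ell=n$ components, and invert the resulting square Vandermonde system on the nodes $0,1,\dots,n-2$ to conclude full $p$-isospectrality. Your closing remark about why the argument fails for orbifolds also matches the paper's Remark~\ref{rem:[0,n-2]-isosporbifolds}.
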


\begin{proof}
Write $\mathcal L=\mathcal L(q;s)$ and $\mathcal L'=\mathcal L(q;s')$ as in \eqref{eq1:conglattice}.
In order to show that $L$ and $L'$ are $p$-isospectral for all $p$, by Theorem~\ref{thm:charact[0,p]}, we will prove that $\mathcal L$ and $\mathcal L'$ are $\norma{\cdot}^*$-isospectral, that is, $\vartheta_{\mathcal L}^{(\ell)}(z) = \vartheta_{\mathcal L'}^{(\ell)}(z)$ for all $0\leq \ell\leq n$.

Abbreviating $\vartheta^{(\ell)}(z)=\vartheta_{\mathcal L}^{(\ell)}(z)-\vartheta_{\mathcal L'}^{(\ell)}(z)$, Theorem~\ref{thm:charact[0,p]} implies that our hypothesis is equivalent to
\begin{equation}\label{eq:n-2equations}
0=\sum_{\ell=0}^{n} \ell^h\vartheta^{(\ell)}(z)\qquad\text{ for all }\;0\leq h\leq n-2.
\end{equation}
Thus, we have $n-1$ equations (one for each $h$) and $n+1$ variables, namely, $\vartheta^{(0)}(z)$,$\dots$,$\vartheta^{(n)}(z)$.
This holds for arbitrary lens orbifolds.

Since $L$ and $L'$ are lens spaces, then $\vartheta^{(n)}(z)=\vartheta^{(n-1)}(z)=0$ by Lemma~\ref{lem:theta^n-1}.
We conclude that \eqref{eq:n-2equations} has $n-1$ equations and $n-1$ variables.
Moreover, the coefficients form a Vandermonde matrix, which is non-singular.
Hence, $\vartheta^{(\ell)}(z)=0$ for all $\ell$, and the proof is complete.
\end{proof}

\begin{remark}\label{rem:[0,n-2]-isosporbifolds}
We conjecture that Theorem~\ref{thm:[0,n-2]-isosp} also holds for lens orbifolds.
To complete the proof, it only remains to show that $\vartheta_{\mathcal L}^{(n-1)}(z)=\vartheta_{\mathcal L'}^{(n-1)}(z)$, which is true if $L$ and $L'$ satisfy \eqref{eq:conditionisotropy}.
Indeed, $\mu$ is a vector in $\mathcal L$ with exactly $n-1$ zero coordinates if and only if $\mu=k\varepsilon_i$ for some non-zero integer $k$ such that $ks_i\equiv0\pmod q$.
Since the last condition is equivalent to $\gcd(q,s_i)$ divides $k$, then
\begin{equation}\label{eq:theta^n-1}
\vartheta_{\mathcal L}^{(n-1)}(z) =
\sum_{i=1}^n \frac{z^{\frac{\gcd(q,s_i)}q}}{1-z^{\frac{\gcd(q,s_i)}q}}.
\end{equation}
Hence, $\vartheta_{\mathcal L}^{(n-1)}(z)$ only depends on the multiset $\{\!\{\gcd(q,s_j):1\leq j\leq n\}\!\}$, which determines the isotropy types of $L$ (see Subsection~\ref{subsec:lensspaces}).
We conclude by Remark~\ref{rem:L(q/d;s)orbifolds} that the non-existence of non-isometric $\{0,\dots,n-2\}$-isospectral lens orbifolds follows by showing that Theorem~\ref{thm:L(q/d,s)} is valid for lens orbifolds.
\end{remark}

\subsection{$I$-isospectrality for $I$ having $n-2$ elements}\label{subsec:n-2elements}
Let
\begin{equation}
\mathcal A=
\begin{pmatrix}
A_1^{(0)}(z) &\dots& A_1^{(n-2)}(z) \\
\vdots & \ddots & \vdots \\
A_{n}^{(0)}(z) &\dots& A_{n}^{(n-2)}(z)
\end{pmatrix},
\end{equation}
with $A_{p}^{(\ell)}(z)$ as in \eqref{eq3:A_pl}.
The matrix $\mathcal A$ has size $n\times(n-1)$ and coefficients in $\C(z)$.

\begin{lemma}\label{lem:matrix}
If $n\leq 9$, then the square matrix obtained by deleting any row to $\mathcal A$ is non-singular.
\end{lemma}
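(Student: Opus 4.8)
The plan is to deduce the non-vanishing of each maximal minor from the top-degree behaviour in $z$. Recall that each entry $A_p^{(\ell)}(z)$ is a polynomial in $z$ of degree at most $2(p-1)$ (see the remark after Theorem~\ref{thm:F_L^p}); crucially this bound depends only on the row index $p$, so the $n$ rows of $\mathcal A$ carry the strictly increasing degree bounds $0,2,4,\dots,2(n-1)$. This asymmetry is what I would exploit. Fix the row $p_0$ to be deleted. When the determinant of the resulting $(n-1)\times(n-1)$ matrix is expanded, the highest power of $z$ occurring in any permutation term is $z^{D}$ with $D=\sum_{p\neq p_0}2(p-1)$, and it is attained only by selecting the top-degree coefficient of every surviving row simultaneously.

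First I would compute, for each $p$ and $\ell$, the coefficient $a_p(\ell)$ of $z^{2(p-1)}$ in $A_p^{(\ell)}(z)$. Keeping in \eqref{eq3:A_pl} only the terms with $t=0$, $\alpha=0$ and $i=j-1$ (the unique way to reach the exponent $2(p-1)$), the inner $\beta$-sum collapses to a coefficient extraction, and a short manipulation using $[x^{N}]\tfrac{f(x)}{1+x}=\sum_{m=0}^{N}(-1)^{N-m}[x^{m}]f(x)$ yields the clean formula
\[
a_p(\ell)=[x^{p-1}]\,(1+x)^{n-1-\ell}(1+2x)^{\ell}.
\]
I expect this leading-coefficient computation to be the only genuinely technical step.

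With the $a_p(\ell)$ in hand, let $B=\big(a_p(\ell)\big)_{1\le p\le n,\,0\le \ell\le n-2}$ be the corresponding $n\times(n-1)$ integer matrix. Since each factor $A_p^{(\sigma(p))}(z)$ has $z^{2(p-1)}$-coefficient $a_p(\sigma(p))$, collecting the coefficient of $z^{D}$ in the permutation expansion gives exactly $\sum_{\sigma}\operatorname{sgn}(\sigma)\prod_{p\neq p_0}a_p(\sigma(p))=\det B_{\widehat{p_0}}$, the minor of $B$ obtained by deleting row $p_0$. Hence it suffices to prove that every maximal minor of $B$ is nonzero: then the determinant over $\C(z)$ is a polynomial with a nonzero coefficient of $z^{D}$, so it cannot vanish identically.

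Finally, I would establish the all-minors-nonzero property for $B$ by pinning down its unique row relation explicitly. The columns of $B$ are the coefficient vectors of the polynomials $g_\ell(x)=(1+x)^{n-1-\ell}(1+2x)^{\ell}=(1+x)^{n-1}\big(\tfrac{1+2x}{1+x}\big)^{\ell}$ for $0\le \ell\le n-2$; since $\tfrac{1+2x}{1+x}$ is non-constant these are linearly independent, so $B$ has full column rank $n-1$ and its left null space is one-dimensional. A left null vector $(c_1,\dots,c_n)$ satisfies $\sum_p c_p\,[x^{p-1}]g_\ell=0$ for all admissible $\ell$; the choice $c_p=(-1)^{p-1}$ turns this into $\sum_{k=0}^{n-1}(-1)^{k}[x^{k}]g_\ell=g_\ell(-1)=0^{\,n-1-\ell}(-1)^{\ell}$, which vanishes for every $\ell\le n-2$. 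Thus the null space is spanned by $c_p=(-1)^{p-1}$, whose entries are all nonzero, so deleting any single row $p_0$ leaves $n-1$ independent rows and forces $\det B_{\widehat{p_0}}\neq 0$. This completes the argument; the main obstacle is the leading-coefficient identity above, and I note that nothing in this reasoning uses $n\le 9$, so the approach in fact removes that hypothesis.
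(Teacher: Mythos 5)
Your proposal is correct, and it takes a genuinely different route from the paper: the paper's proof of Lemma~\ref{lem:matrix} is purely computational --- the claim is checked with \cite{Sage} for each $n\leq 9$, and the (nonzero) determinants of the $n$ maximal minors are simply listed in the appendix \cite{Lauret-appendix} --- whereas you give a structural argument via leading coefficients. I checked your key steps against \eqref{eq3:A_pl}: the exponent $2(p-j-t-\alpha+i)$ there reaches $2(p-1)$ only for $t=\alpha=0$, $i=j-1$, so the coefficient of $z^{2(p-1)}$ in $A_p^{(\ell)}(z)$ is $a_p(\ell)=\sum_{j=1}^{p}(-1)^{j-1}[x^{p-j}]\,(1+x)^{n-\ell}(1+2x)^{\ell}=[x^{p-1}]\,(1+x)^{n-1-\ell}(1+2x)^{\ell}$, exactly as you claim; this is consistent with the paper's displayed cases $A_1^{(\ell)}(z)=1$, $A_2^{(\ell)}(z)=(n-1+\ell)z^2+(n-1-\ell)$, and with the $n=3$ system written out in Subsection~\ref{subsec:p-isosp}. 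Since the degree bound $2(p-1)$ depends only on the row index, the coefficient of $z^{D}$ (with $D=\sum_{p\neq p_0}2(p-1)$) in the deleted-row determinant is indeed the corresponding minor of the integer matrix $B=(a_p(\ell))$; the columns of $B$ are the coefficient vectors of $g_\ell(x)=(1+x)^{n-1}\bigl(\tfrac{1+2x}{1+x}\bigr)^{\ell}$, which are linearly independent, so the left kernel of $B$ is one-dimensional, and the evaluation $\sum_{p=1}^{n}(-1)^{p-1}[x^{p-1}]g_\ell=g_\ell(-1)=0^{\,n-1-\ell}(-1)^{\ell}=0$ for $\ell\leq n-2$ correctly identifies it as spanned by $((-1)^{p-1})_p$, whose entries are all nonzero; deleting any one row therefore leaves independent rows, so every maximal minor of $B$, and hence of $\mathcal A$, is nonzero. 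As to what each approach buys: the Sage verification is immediate and yields the explicit determinants recorded in \cite{Lauret-appendix}, but it is confined to $n\leq 9$; your argument proves the lemma for every $n$, thereby settling the belief expressed in the remark following Lemma~\ref{lem:matrix} (where the author states a general proof ``does not seem easy'') and removing the implicit restriction $n\leq 9$ from Theorem~\ref{thm:n-2elements} and Remark~\ref{rem:n-2elementsorbifolds}, whose proofs invoke the lemma.
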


\begin{proof}
We have checked this claim by using \cite{Sage}.
In the appendix \cite{Lauret-appendix}, the (non-zero) determinants of such matrices are indicated.
\end{proof}

\begin{remark}
The author believes that Lemma~\ref{lem:matrix} holds for every $n$.
However, the proof does not seem easy.
\end{remark}

\begin{theorem}\label{thm:n-2elements}
Let $I\subset \{0,\dots,n-1\}$ with $n-2$ elements, excluding the case $\{0,\dots,n-3\}$.
Then, there is no any $I$-isospectral pair of non-isometric $(2n-1)$-dimensional lens spaces.
\end{theorem}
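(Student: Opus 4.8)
The plan is to encode $I$-isospectrality of two lens spaces $L=L(q;s)$ and $L'=L(q;s')$, with congruence lattices $\mathcal L$ and $\mathcal L'$, as a homogeneous linear system in the differences $\vartheta^{(\ell)}(z):=\vartheta_{\mathcal L}^{(\ell)}(z)-\vartheta_{\mathcal L'}^{(\ell)}(z)$, and then to invoke Lemma~\ref{lem:matrix}. Because $L$ and $L'$ are lens spaces, Lemma~\ref{lem:theta^n-1} gives $\vartheta^{(n)}(z)=\vartheta^{(n-1)}(z)=0$, so only the $n-1$ functions $\vartheta^{(0)}(z),\dots,\vartheta^{(n-2)}(z)$ survive. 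Subtracting the formula of Theorem~\ref{thm:F_L^p} for $L$ and $L'$ (the $L$-independent summand $(-1)^pz^{-p}$ cancels) shows that, for each $1\le p\le n$,
\[
F_L^{p-1}(z)-F_{L'}^{p-1}(z)=\frac{1}{z^p(1-z^2)^{n-1}}\sum_{\ell=0}^{n-2}\vartheta^{(\ell)}(z)\,A_p^{(\ell)}(z),
\]
so by Proposition~\ref{prop:p-isospIkeda} the pair is $p$-isospectral (for $0\le p\le n-1$) exactly when $\sum_{\ell=0}^{n-2}\vartheta^{(\ell)}(z)A_m^{(\ell)}(z)=0$ for $m=p+1$ and, when $p\ge1$, also for $m=p$.

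Collecting these vanishing conditions over all $p\in I$, an $I$-isospectral pair must satisfy $\sum_{\ell=0}^{n-2}\vartheta^{(\ell)}(z)A_m^{(\ell)}(z)=0$ for every $m$ in the index set $J:=(I+1)\cup(I\smallsetminus\{0\})\subseteq\{1,\dots,n\}$. The combinatorial core of the proof is to check that, whenever $|I|=n-2$ and $I\neq\{0,\dots,n-3\}$, the set $J$ omits at most one index of $\{1,\dots,n\}$, i.e.\ $|J|\ge n-1$. Writing $I=\{0,\dots,n-1\}\smallsetminus\{a,b\}$ with $a<b$, a short computation (using $a\le n-2$, so $a+1\neq n$) shows that the complement of $J$ in $\{1,\dots,n\}$ equals $\{a+1:b=a+1\}\cup\{n:b=n-1\}$; hence two indices are omitted only when $b=a+1=n-1$, that is, precisely the excluded case $I=\{0,\dots,n-3\}$. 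In every other case $|J|\ge n-1$.

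Fixing $n-1$ indices of $J$, the system reads $\mathcal A_J\,(\vartheta^{(0)}(z),\dots,\vartheta^{(n-2)}(z))^{T}=0$, where $\mathcal A_J$ is the $(n-1)\times(n-1)$ submatrix of $\mathcal A$ obtained by keeping those rows. Since $n\le9$, Lemma~\ref{lem:matrix} guarantees that every submatrix of $\mathcal A$ obtained by deleting one row is non-singular, so $\mathcal A_J$ is invertible and the only solution is $\vartheta^{(\ell)}(z)=0$ for all $0\le\ell\le n-2$. Together with $\vartheta^{(n-1)}(z)=\vartheta^{(n)}(z)=0$ this yields $\vartheta_{\mathcal L}^{(\ell)}(z)=\vartheta_{\mathcal L'}^{(\ell)}(z)$ for all $\ell$, whence $L$ and $L'$ are $p$-isospectral for all $p$ by Theorem~\ref{thm:charact[0,p]}. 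As $|I|=n-2<n$, this contradicts the maximality of $I$ in Definition~\ref{notation:I-isospectral}, so no such $I$-isospectral pair exists. I expect the delicate points to be the bookkeeping of $J$ at the boundary cases $a=0$ and $b=n-1$, and the reliance on Lemma~\ref{lem:matrix}, which is verified only computationally and only for $n\le9$.
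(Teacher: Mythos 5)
Your proof is correct and follows essentially the same route as the paper's: subtract the formula of Theorem~\ref{thm:F_L^p} for the pair, eliminate $\vartheta^{(n-1)}(z)$ and $\vartheta^{(n)}(z)$ via Lemma~\ref{lem:theta^n-1}, and apply Lemma~\ref{lem:matrix} to an $(n-1)\times(n-1)$ subsystem to force $\vartheta^{(\ell)}(z)=0$ for all $\ell$, whence all-$p$-isospectrality by Theorem~\ref{thm:charact[0,p]}. The only difference is that you carry out explicitly the row count (verifying that $J=(I+1)\cup(I\smallsetminus\{0\})$ omits at most one index of $\{1,\dots,n\}$ precisely outside the excluded case $I=\{0,\dots,n-3\}$) which the paper leaves to the reader with ``convince yourself,'' and you correctly flag that, exactly as in the paper, the argument rests on Lemma~\ref{lem:matrix} and is therefore verified only for $n\le 9$.
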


\begin{proof}
By Proposition~\ref{prop:p-isospIkeda}, two $(2n-1)$-dimensional lens spaces $L$ and $L'$ are $p$-isospectral for all $p\in I$ if $F_L^p(z)=F_{L'}^p(z)$ for all $p$ satisfying $p\in I$ or $p-1\in I$.
Convince yourself that these are at least $n-1$ different equations, as a consequence of how we choose the subset $I$.
Of course, $F_L^{-1}(z)=F_{L'}^{-1}(z)$ is not considered as an equation since they are zero by convention.

By Theorem~\ref{thm:F_L^p}, each equation $F_L^p(z)=F_{L'}^p(z)$ for some $0\leq p\leq n-1$ is equivalent to
\begin{equation}
0= \sum_{\ell=0}^n A_p^{(\ell)}(z) \vartheta^{(\ell)}(z).
\end{equation}
Here, we are abbreviating $\vartheta^{(\ell)}(z)=\vartheta_{\mathcal L'}^{(\ell)}(z)-\vartheta_{\mathcal L'}^{(\ell)}(z)$.

We conclude that we have at least $n-1$ equations with $n-1$ variables $\vartheta^{(0)}(z),\dots, \vartheta^{(n-2)}(z)$.
Indeed, $\vartheta^{(n-1)}(z)=\vartheta^{(n)}(z)=0$ by Lemma~\ref{lem:theta^n-1} since $L$ and $L'$ are lens spaces.
Hence, we obtain that $\vartheta^{(\ell)}(z)=0$ for all $\ell$ since the associated matrix has rank $n-1$ by Lemma~\ref{lem:matrix}.
Consequently, $L$ and $L'$ are $p$-isospectral for all $p$ by Theorem~\ref{thm:charact[0,p]}, and the proof is complete.
\end{proof}

\begin{remark}
Theorem~\ref{thm:n-2elements} cannot be extended to $I=\{0,\dots,n-3\}$ since there are $\{0,\dots,n-3\}$-isospectral pairs of $(2n-1)$-dimensional lens spaces, for low values of $n$ (e.g. $n=3,4,5,6,7,$).
The author thinks that such examples does not exist in every dimension.
\end{remark}

\begin{remark}\label{rem:n-2elementsorbifolds}
In case Theorem~\ref{thm:L(q/d,s)} is true for lens orbifolds (see Remark~\ref{rem:L(q/d;s)orbifolds}), we obtain the following result:
if $I\subset\{0,\dots,n-1\}$ has $n-2$ elements, $I\neq \{0,\dots,n-3\}$ and $0\in I$, then there is no any pair of $I$-isospectral non-isometric $(2n-1)$-dimensional lens orbifolds.
Indeed, following the proof of Theorem~\ref{thm:n-2elements}, it only remains to prove that $\vartheta_{\mathcal L'}^{(n-1)}(z)=\vartheta_{\mathcal L'}^{(n-1)}(z)$, which holds by Remark~\ref{rem:L(q/d;s)orbifolds} since $L$ and $L'$ are $0$-isospectral.

The assumption $0\in I$ above is essential, since Table~\ref{table:summarydim9} ensures the existence of pairs of $\{2,3,4\}$-isospectral $9$-dimensional lens orbifolds.
\end{remark}

\subsection{Orbifolds and manifolds}\label{subsec:Orbifoldsmanifolds}

We recall that a lens orbifold $L:=L(q;s_1,\dots,s_n)$ has a manifold structure if  $\gcd(q,s_j)=1$ for every $j$.
In this case, $L$ is called a lens space.

The next result follows immediately from \cite[Prop.~3.4(ii)]{GordonRossetti03}, since lens orbifolds of the same dimension share the Riemannian universal cover.

\begin{theorem}\label{thm:orbifoldmanifolds}
	Let $L$ and $L'$ be $0$-isospectral lens orbifolds.
	If $L$ is a lens space (i.e.\ a manifold), then $L'$ is also a lens space.
\end{theorem}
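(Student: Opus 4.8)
The plan is to pass to the common Riemannian universal cover $S^{2n-1}$ and compare the two heat traces on functions, exploiting that the heat kernel of the round sphere is strictly positive. Write $L=\Gamma\ba S^{2n-1}$ and $L'=\Gamma'\ba S^{2n-1}$, and let $K_t$ denote the heat kernel of $S^{2n-1}$. For a good spherical orbifold, the heat trace on functions is
\begin{equation*}
\operatorname{Tr}\big(e^{-t\Delta_{\Gamma,0}}\big)=\frac{1}{|\Gamma|}\sum_{\gamma\in\Gamma}\int_{S^{2n-1}}K_t(x,\gamma x)\,dx .
\end{equation*}
The $0$-spectrum determines the volume through the leading term of the small-time expansion, and since the orbifold volume of $\Gamma\ba S^{2n-1}$ equals $\operatorname{Vol}(S^{2n-1})/|\Gamma|$, $0$-isospectrality forces $|\Gamma|=|\Gamma'|=q$. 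Consequently the identity contributions $\tfrac1q\int_{S^{2n-1}}K_t(x,x)\,dx$ agree for $L$ and $L'$, so $0$-isospectrality becomes equivalent to
\begin{equation*}
\sum_{\gamma\in\Gamma\smallsetminus\{e\}}\int_{S^{2n-1}}K_t(x,\gamma x)\,dx=\sum_{\gamma'\in\Gamma'\smallsetminus\{e\}}\int_{S^{2n-1}}K_t(x,\gamma' x)\,dx \qquad (t>0).
\end{equation*}

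First I would analyze the left-hand side using that $L$ is a lens space, i.e.\ $\Gamma$ acts freely. Then every $\gamma\neq e$ moves each point a uniform distance $\delta_\gamma:=\min_x d(x,\gamma x)>0$, and the Gaussian upper bound $K_t(x,y)\le C\,t^{-(2n-1)/2}e^{-d(x,y)^2/(5t)}$ gives $\int_{S^{2n-1}}K_t(x,\gamma x)\,dx=O(e^{-c/t})$ as $t\to0^+$. Being a finite sum, the whole left-hand side is $O(e^{-c/t})$, and therefore so is the right-hand side.

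Next I would argue by contradiction using positivity. Suppose $L'$ is not a manifold; then some $\gamma'\neq e$ fixes a point, so its fixed-point set $S^{2n-1}\cap\operatorname{Fix}(\gamma')$ is a nonempty subsphere of odd dimension $m\ge1$. Since $K_t$ is strictly positive on the connected manifold $S^{2n-1}$, every summand $\int_{S^{2n-1}}K_t(x,\gamma' x)\,dx$ is strictly positive, so there is no cancellation among the terms of the right-hand side. A Laplace-type localization near the fixed subsphere shows that the summand attached to this $\gamma'$ grows like a positive multiple of $t^{-m/2}$, in particular it is not $O(e^{-c/t})$. As all summands are positive, the right-hand side cannot be $O(e^{-c/t})$, contradicting the previous paragraph. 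Hence $\Gamma'$ acts freely and $L'$ is a lens space.

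The crux — and the only delicate point — is the positivity bookkeeping: it is essential that each non-identity contribution is nonnegative, since this rules out the a priori possibility that distinct singular elements cancel in the small-time expansion (their exponents in $t$ in fact overlap, because both the principal and the singular strata of an odd-dimensional spherical orbifold contribute half-odd-integer powers). The strict positivity of the sphere's heat kernel, together with the Gaussian bound for free elements, is exactly what makes the dichotomy clean. This is the mechanism behind \cite[Prop.~3.4(ii)]{GordonRossetti03}, which one may cite directly once it is observed that all $(2n-1)$-dimensional lens orbifolds share the Riemannian universal cover $S^{2n-1}$.
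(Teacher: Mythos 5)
Your proof is correct, and its verdict matches the paper's; the difference is one of packaging. The paper's entire proof is a single sentence: it observes that all $(2n-1)$-dimensional lens orbifolds share the Riemannian universal cover $S^{2n-1}$ and then invokes \cite[Prop.~3.4(ii)]{GordonRossetti03} as a black box. You instead reconstruct, from scratch, the heat-trace mechanism that underlies that cited proposition: split the trace $\frac{1}{|\Gamma|}\sum_{\gamma\in\Gamma}\int_{S^{2n-1}}K_t(x,\gamma x)\,dx$ into identity and non-identity contributions, extract $|\Gamma|=|\Gamma'|$ from the leading volume term (legitimate here, since every non-identity term is $O(t^{-m/2})$ with $m\le 2n-3$, so the $t^{-(2n-1)/2}$ coefficient is untouched by singular strata), kill the free side with the Gaussian off-diagonal bound, and detect a singular element on the other side by Donnelly-type localization along its fixed subsphere. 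Your emphasis on strict positivity of $K_t$ is exactly the right delicate point: it is what forbids cancellation among distinct singular elements whose fixed-set dimensions, and hence $t$-exponents, can coincide. Two small things worth making explicit if you write this up: the fixed-point set of $\gamma'\in\Gamma'\smallsetminus\{e\}$ is an odd-dimensional subsphere because the $+1$-eigenspace of an element of $\SO(2n)$ of the block-rotation form $\gamma_{q,s'}^h$ is even-dimensional; and the statement that the leading heat coefficient of a good orbifold is its volume is not an extra input but follows from your own decomposition. The trade-off between the two routes: the citation buys brevity and the full generality of Gordon--Rossetti (arbitrary good orbifolds with a common Riemannian cover), while your argument is self-contained and exposes why the dichotomy is clean — as you yourself note at the end, once the common cover is observed one may simply cite \cite[Prop.~3.4(ii)]{GordonRossetti03}, which is precisely what the paper does.
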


For arbitrary orbifolds, the result does not hold for $p>0$ since Gordon and Rossetti~\cite{GordonRossetti03} show a counterexample by using the middle degree, that is, there is a $2d$-dimensional manifold $d$-isospectral to a $2d$-dimensional orbifold with singularities.
However, in the special case of lens orbifolds, the computational results give strong evidences of the following claim.

\begin{conjecture}\label{conj:orbifold/manifold}
	If two $(2n-1)$-dimensional lens orbifolds $L$ and $L'$ are $p$-isospectral for some $0< p\leq n-1$ and $L$ is a lens space (i.e.\ a manifold), then $L'$ is a lens space.
\end{conjecture}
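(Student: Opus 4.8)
The plan is to reduce the statement to a single identity about the congruence lattices and then to attack that identity with the analytic machinery of Section~\ref{sec:preliminaries}. Write $\mathcal L=\mathcal L(q;s)$ and $\mathcal L'=\mathcal L(q;s')$ for the congruence lattices of $L$ and $L'$, and abbreviate $\vartheta^{(\ell)}(z)=\vartheta_{\mathcal L}^{(\ell)}(z)-\vartheta_{\mathcal L'}^{(\ell)}(z)$. First I would record what ``$L'$ is a lens space'' means at the level of generating functions. By \eqref{eq:theta^n-1}, the function $\vartheta_{\mathcal L'}^{(n-1)}(z)$ is a sum of $n$ terms, one per parameter $s_j'$, and it depends only on the multiset $\{\!\{\gcd(q,s_j'):1\le j\le n\}\!\}$; since all of its power-series coefficients are non-negative, its lowest-order term has exponent $q/\max_j\gcd(q,s_j')$, so $\vartheta_{\mathcal L'}^{(n-1)}(z)=\frac{2nz^q}{1-z^q}$ if and only if every $\gcd(q,s_j')=1$, i.e.\ if and only if $L'$ is a lens space. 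As $\vartheta_{\mathcal L}^{(n-1)}(z)=\frac{2nz^q}{1-z^q}$ by Lemma~\ref{lem:theta^n-1} (because $L$ is a lens space), the whole conjecture reduces to proving the single identity $\vartheta^{(n-1)}(z)=0$.

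Next I would extract the equations furnished by the hypothesis. By Proposition~\ref{prop:p-isospIkeda}, $p$-isospectrality of $L$ and $L'$ is equivalent to $F_L^{p-1}(z)=F_{L'}^{p-1}(z)$ together with $F_L^{p}(z)=F_{L'}^{p}(z)$, and by Theorem~\ref{thm:F_L^p} (applied with indices $p$ and $p+1$, both legitimate since $0<p\le n-1$) these two identities translate into
\[
\sum_{\ell=0}^{n} A_{p}^{(\ell)}(z)\,\vartheta^{(\ell)}(z)=0
\qquad\text{and}\qquad
\sum_{\ell=0}^{n} A_{p+1}^{(\ell)}(z)\,\vartheta^{(\ell)}(z)=0 .
\]
Since $L$ is a lens space, Lemma~\ref{lem:theta^n-1} gives $\vartheta^{(n)}(z)=0$, so these are two linear relations among the $n$ functions $\vartheta^{(0)}(z),\dots,\vartheta^{(n-1)}(z)$ with coefficients in $\C(z)$. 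This is exactly where the present situation departs from Theorems~\ref{thm:[0,n-2]-isosp} and~\ref{thm:n-2elements}: a single value of $p$ produces only two equations, so for $n\ge3$ the rank argument via Lemma~\ref{lem:matrix} is unavailable and the system is badly underdetermined. Linear algebra alone therefore cannot isolate $\vartheta^{(n-1)}(z)$, and some arithmetic input must be added.

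The arithmetic input I would try to exploit is the location of the poles, in the spirit of the proof of Theorem~\ref{thm:L(q/d,s)}, combined with positivity. By Theorem~\ref{thm:theta^l-rational} each $\vartheta_{\mathcal L'}^{(\ell)}(z)$ is a rational function whose poles lie at $q$-th roots of unity with order at most $n-\ell$; in particular $\vartheta^{(n-1)}(z)$ can only have simple poles there, and a direct computation shows that its residue at a primitive $q$-th root of unity is proportional to $n$ minus the number of parameters $s_j'$ coprime to $q$. The goal would be to feed the two relations above into an analysis of the singular parts at primitive $d$-th roots of unity for each $d\mid q$, using the non-negativity of the coefficients $N_{\mathcal L'}(k,\ell)$ to rule out cancellation, and so force this residue to vanish---equivalently, to force $L'$ to have no singular points. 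A natural variant worth attempting is to first deduce $0$-isospectrality, $\vartheta_{\mathcal L}(z)=\vartheta_{\mathcal L'}(z)$, after which Theorem~\ref{thm:orbifoldmanifolds} would conclude at once.

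The hard part, and presumably the reason the statement is only conjectural, is precisely the difficulty flagged in Remark~\ref{rem:L(q/d;s)orbifolds}: when $L'$ is a genuine orbifold the singular parts of $\vartheta_{\mathcal L'}^{(\ell)}(z)$ at roots of unity are unwieldy, since the $s_j'$ need not be coprime to $q$ and roots of several different orders can occur within a single term. With only two relations available and no clean control over these singular parts, it is unclear how to convert positivity into the required vanishing of $\vartheta^{(n-1)}(z)$; making such an argument rigorous, and uniform in $n$, is the central obstacle.
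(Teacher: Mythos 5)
The statement you set out to prove is, in the paper, exactly that: a \emph{conjecture} (Conjecture~\ref{conj:orbifold/manifold}), supported only by computational evidence; the paper proves the analogous fact only for $p=0$ (Theorem~\ref{thm:orbifoldmanifolds}, via \cite{GordonRossetti03}) and leaves every $p>0$ open. Your proposal is correspondingly not a proof, and to your credit you say so: the reductions you perform are sound and consistent with the paper's framework, but the decisive step is never executed. Concretely, the reduction to the single identity $\vartheta^{(n-1)}(z)=0$ is correct --- $\vartheta_{\mathcal L'}^{(n-1)}(z)$ has non-negative coefficients, its lowest-order term detects $\max_j\gcd(q,s_j')$, and $L'$ is a lens space if and only if all these gcds equal $1$ --- and your translation of $p$-isospectrality into the two relations $\sum_{\ell=0}^{n-1}A_p^{(\ell)}(z)\vartheta^{(\ell)}(z)=0=\sum_{\ell=0}^{n-1}A_{p+1}^{(\ell)}(z)\vartheta^{(\ell)}(z)$ is exactly what Proposition~\ref{prop:p-isospIkeda} and Theorem~\ref{thm:F_L^p} give (one small correction: $\vartheta^{(n)}(z)=0$ holds for \emph{all} lens orbifolds, since $\Phi_{\mathcal L}^{(n)}(z)=1$ always; it does not require $L$ to be a manifold). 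But from there the argument stops being an argument: two linear relations among the $n$ unknowns $\vartheta^{(0)}(z),\dots,\vartheta^{(n-1)}(z)$ cannot isolate $\vartheta^{(n-1)}(z)$ for $n\geq3$, and the proposed remedy --- analysing singular parts at $d$-th roots of unity for $d\mid q$ and invoking positivity of the $N_{\mathcal L'}(k,\ell)$ to forbid cancellation --- is named but not carried out. This is precisely the obstruction the paper itself flags in Remark~\ref{rem:L(q/d;s)orbifolds}: when $L'$ has singular points, roots of unity of several different orders occur within a single term and the singular parts become unmanageable, and you offer no mechanism by which the two available relations control the residues of $\vartheta^{(n-1)}(z)$.

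One element of your plan is not merely incomplete but provably dead, and you should strike it: the ``natural variant'' of first deducing $0$-isospectrality and then concluding by Theorem~\ref{thm:orbifoldmanifolds}. The intermediate claim --- that $p$-isospectrality for a single $p>0$, with $L$ a lens space, forces $0$-isospectrality --- is false: by Remark~\ref{rem:p-isospp>0} and Table~\ref{table:summarydim11} there exist pairs of genuine $11$-dimensional lens \emph{spaces} that are $\{4,5\}$-isospectral, i.e.\ $4$-isospectral but not $0$-isospectral (and further examples occur in dimension $13$). So any correct proof of the conjecture must separate the conclusion ``both are manifolds'' from the much stronger (and false) conclusion ``same $0$-spectrum''; your pole/residue heuristic does not yet make that separation, and the gap you acknowledge at the end is the entirety of the mathematical content still required.
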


\subsection{$\{p\}$-isospectrality for $p>0$}\label{subsec:p-isosp}
Here, we investigate the examples appeared of $\{p\}$-isospec\-tral lens orbifolds with $p>0$ in dimension $5$ and $7$.
The computational results in \cite[Table 1]{Lauret-appendix} give evidences for the following claim.

\begin{conjecture}\label{conj:5-dim[2]-isosp}
The set of families of $5$-dimensional non-isometric $\{2\}$-isospectral lens orbifolds are given by pairs of the form
\begin{equation*}
\begin{cases}
  L(8t;4,t,3t)\\
  L(8t;8,t,3t)
\end{cases}
\qquad \text{for $t\geq1$ odd}.
\end{equation*}
\end{conjecture}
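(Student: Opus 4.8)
The plan is to treat the two halves of the classification separately: first verifying that each listed pair is $\{2\}$-isospectral, and then the harder completeness statement that no other pairs occur. Throughout I specialize to $n=3$ and work with the differences $v_\ell(z)=\vartheta_{\mathcal L}^{(\ell)}(z)-\vartheta_{\mathcal L'}^{(\ell)}(z)$ and their moments $m_h(z)=\sum_{\ell=0}^3 \ell^h\, v_\ell(z)$. Since the only vector of $\Z^3$ with three zero coordinates is $0$, one always has $v_3=0$, so only $v_0,v_1,v_2$ and the moments $m_0,m_1,m_2$ are in play. By Proposition~\ref{prop:p-isospIkeda} a pair of $5$-dimensional lens orbifolds is $\{2\}$-isospectral exactly when $F_L^1=F_{L'}^1$ and $F_L^2=F_{L'}^2$ but $F_L^0\neq F_{L'}^0$. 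Reading off the coefficients in Theorem~\ref{thm:F_L^p}, the string condition $F^0_L=F^0_{L'}$ is $m_0=0$; using $(2+\ell)z^2+(2-\ell)=2(z^2+1)+\ell(z^2-1)$, the condition $F^1_L=F^1_{L'}$ becomes $2(z^2+1)m_0+(z^2-1)m_1=0$; and $F^2_L=F^2_{L'}$ becomes $\sum_\ell A_3^{(\ell)}(z)v_\ell=0$, which, since $A_3^{(\ell)}(z)$ is a polynomial of degree $\le 2$ in $\ell$, is a single relation $\alpha(z)m_0+\beta(z)m_1+\gamma(z)m_2=0$ with explicit $\alpha,\beta,\gamma\in\Z[z]$. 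Thus $\{2\}$-isospectrality is the system of the last two relations together with $m_0\neq 0$.

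For the forward direction I would exploit that, for $t$ odd, $\gcd(8,t)=1$, so the Chinese Remainder Theorem splits the congruence modulo $8t$ into its parts modulo $8$ and modulo $t$. A short computation presents $\mathcal L(8t;4,t,3t)$ as the set of $(a,b,c)\in\Z^3$ with $a\equiv0\pmod t$ and $b+3c\equiv 4a\pmod 8$, and $\mathcal L(8t;8,t,3t)$ as those with $a\equiv0\pmod t$ and $b+3c\equiv0\pmod 8$; so the two lattices share $a\equiv0\pmod t$ and differ only by the coset shift $b+3c\equiv 4a$ versus $b+3c\equiv0$ modulo $8$ (note $4a\equiv0$ or $4$ according to the parity of $a/t$). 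From this explicit description I would compute each $\vartheta_{\mathcal L}^{(\ell)}(z)$ via the rational forms of Theorem~\ref{thm:theta^l-rational}, and substitute into the two moment relations to confirm they hold identically in $z$, while checking $m_0\neq0$ (equivalently $\vartheta_{\mathcal L}(z)\neq\vartheta_{\mathcal L'}(z)$). Non-isometry of the two members, and hence that the pair is genuinely $\{2\}$- and not $\{0,1,2\}$-isospectral, follows from Proposition~\ref{prop3:lens-isom}, since their $\gcd$-multisets $\{\!\{4,t,t\}\!\}$ and $\{\!\{8,t,t\}\!\}$ differ.

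For the completeness direction the key structural observation is that the system above is rigid: the relation from $F^1$ expresses $m_1$ as a fixed rational multiple of $m_0$, the relation from $F^2$ then expresses $m_2$ in terms of $m_0$ and $m_1$, and the invertible change of variables $(v_0,v_1,v_2)\mapsto(m_0,m_1,m_2)$ (a Vandermonde system with nodes $0,1,2$) recovers each $v_\ell$ as a universal rational multiple of the single function $m_0(z)$. Hence every $\{2\}$-isospectral pair is constrained to a one-parameter shape, and the task reduces to deciding which parameter triples $(q;s_1,s_2,s_3)$ realize it. Here I would analyze the poles of the $\vartheta_{\mathcal L}^{(\ell)}$ at roots of unity, in the spirit of the proof of Theorem~\ref{thm:L(q/d,s)}, together with the explicit contribution of the isotropy strata through $v_2$ (governed by the multiset $\{\!\{\gcd(q,s_j):1\le j\le n\}\!\}$ as in \eqref{eq:theta^n-1}), aiming to force $q=8t$ with $t$ odd and to pin down the parameters up to the isometries of Proposition~\ref{prop3:lens-isom}. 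This last step is the main obstacle and the reason the statement remains conjectural: it amounts to an unbounded Diophantine classification over all $q$ and all admissible triples, and I do not expect the pole and moment constraints alone to terminate the search without substantial additional input.
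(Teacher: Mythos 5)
Note first that the statement you addressed is a \emph{conjecture}: the paper gives no proof, only a remark immediately after it sketching a possible attack, and your proposal follows essentially that same sketch. Like the paper, you use Theorem~\ref{thm:F_L^p} to convert $F^1_L=F^1_{L'}$ and $F^2_L=F^2_{L'}$ into a linear system on the differences $\vartheta^{(\ell)}_{\mathcal L}(z)-\vartheta^{(\ell)}_{\mathcal L'}(z)$ (your moment variables $m_0,m_1,m_2$ are an invertible Vandermonde repackaging of the paper's system, whose reduced form exhibits exactly the one-parameter rigidity you describe), and your plan to force $8\mid q$ by evaluating at primitive $8$th roots of unity, using that the denominators from Theorem~\ref{thm:theta^l-rational} are powers of $1-z^q$, is precisely the paper's proposed step --- which the paper itself leaves unfinished (``this equality should give a contradiction''), so you and the author stop at the same open exhaustiveness problem, and your honest assessment of why it remains conjectural matches the paper's. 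Your CRT description of the two congruence lattices (both cut out by $a\equiv 0\pmod t$, differing only in the coset condition $b+3c\equiv 4a$ versus $b+3c\equiv 0$ modulo $8$) is a concrete addition the paper does not spell out, and is the natural route to verify the listed pairs, which the paper also only asserts to be checkable via its reduced system. One slip to fix: non-isometry via Proposition~\ref{prop3:lens-isom} does \emph{not} by itself imply the pairs fail to be $\{0,1,2\}$-isospectral (non-isometric all-$p$-isospectral pairs are the paper's main earlier examples); what is needed is your separately listed check $m_0\neq 0$, i.e.\ $F^0_L\neq F^0_{L'}$, corresponding to the paper's ``simple calculation'' that the pairs are not $p$-isospectral for $p=0,1$.
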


\begin{remark}
We sketch a possible proof.
If $L$ and $L'$ are $5$-dimensional $\{2\}$-isospectral lens orbifolds, then $F_{\Gamma}^{1}(z)=F_{\Gamma'}^{1}(z)$ and $F_{\Gamma}^{2}(z)=F_{\Gamma'}^{2}(z)$.
Theorem~\ref{thm:F_L^p} implies that these two equations are equivalent to the linear equation system
$$
\begin{array}{rrrcl}
(2z^2+2) \vartheta^{(0)}(z)
+&(3z^2+1) \vartheta^{(1)} (z)
+&4z^2\vartheta^{(2)} (z)
&=&0,\\
(z^4+4z^2+1) \vartheta^{(0)}(z)
+&(2z^4+2z^2) \vartheta^{(1)} (z)
+&(4z^4+2z^2) \vartheta^{(2)} (z)
&=&0,
\end{array}
$$
on the variables $\vartheta^{(\ell)} (z) := \vartheta_{\mathcal L}^{(\ell)}(z) - \vartheta_{\mathcal L'}^{(\ell)}(z)$ for $0\leq \ell\leq 2$.
By reducing this system we get that
$$
\begin{array}{rrcl}
(z^4+1) \vartheta^{(1)}(z)
+&4z^4\vartheta^{(2)}(z)
&=&0,\\
(2z^2-1) \vartheta^{(1)}(z)
+&2z^2\vartheta^{(0)}(z)
&=&0.
\end{array}
$$

This reduced system should be very useful to check that the sequence of pairs in Conjecture~\ref{conj:5-dim[2]-isosp} are $2$-isospectral.
A simple calculation shows that these pairs cannot be $p$-isospectral for $p=0,1$.

A more difficult task is to prove that this sequence exhausts such examples.
We could show that that $8$ divides to $q$ as follows.
We known that $\vartheta^{(\ell)}(z)$ is a rational function with denominator $(1-z^q)^{3-\ell}$ by Theorem~\ref{thm:theta^l-rational}.
We assume that $8$ does not divide to $q$, thus $\vartheta^{(1)}(z)$ converges at any primitive root of unity of order $8$, namely, $\xi_8^m$ with $m\in\Z$ odd.
Evaluating the first row of the previous system at $z=\xi_8^m$, we obtain that $0=4\xi_8^{4m}\vartheta^{(2)}(\xi_8^m)=-4\vartheta^{(2)}(\xi_8^m)$.
This equality should give a contradiction.
\end{remark}

Furthermore, \cite[Table 2]{Lauret-appendix} gives evidence on the following claim.

\begin{conjecture}\label{conj:7-dim[2][3]-isosp}
If $L(q;s)$ and $L(q;s')$ is a pair of $7$-dimensional non-isometric $\{2\}$-isospectral (resp.\ $\{3\}$-isospectral) lens orbifolds with fundamental group of order $q$, then $q$ is divisible by $3$ (resp.\ $5$).
\end{conjecture}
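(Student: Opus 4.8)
The plan is to imitate, for $n=4$, the strategy sketched after Conjecture~\ref{conj:5-dim[2]-isosp}. Suppose $L=L(q;s)$ and $L'=L(q;s')$ are $7$-dimensional lens orbifolds, write $\mathcal L,\mathcal L'$ for their congruence lattices, and abbreviate $\vartheta^{(\ell)}(z)=\vartheta_{\mathcal L}^{(\ell)}(z)-\vartheta_{\mathcal L'}^{(\ell)}(z)$ for $0\le\ell\le 4$; note that $\vartheta^{(4)}\equiv0$ because the origin is the only vector with four zero coordinates. By Proposition~\ref{prop:p-isospIkeda}, $\{2\}$-isospectrality amounts to $F_L^1=F_{L'}^1$ together with $F_L^2=F_{L'}^2$, while $\{3\}$-isospectrality amounts to $F_L^2=F_{L'}^2$ together with $F_L^3=F_{L'}^3$. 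Applying Theorem~\ref{thm:F_L^p}, each of these becomes a homogeneous linear equation $0=\sum_{\ell=0}^{4}A_p^{(\ell)}(z)\,\vartheta^{(\ell)}(z)$: the $\{2\}$ case uses $A_2^{(\ell)}$ and $A_3^{(\ell)}$, the $\{3\}$ case uses $A_3^{(\ell)}$ and $A_4^{(\ell)}$. First I would row-reduce the resulting $2\times 4$ system over $\C(z)$, exactly as in the five-dimensional remark, to bring it to a form with the fewest surviving variables and the cleanest coefficients.

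The arithmetic input is Theorem~\ref{thm:theta^l-rational}: each $\vartheta^{(\ell)}(z)$ is a rational function whose only possible poles lie at $q$-th roots of unity, with order at most $4-\ell$. After clearing the denominator $(1-z^q)^4$, the reduced equations become polynomial identities. The prime enters through evaluation at well-chosen roots of unity: for the $\{2\}$ case at a primitive cube root $\xi_3$ and the companion primitive sixth root $\xi_6$, and for the $\{3\}$ case at a primitive fifth root $\xi_5$ and tenth root $\xi_{10}$. These are exactly the zeros of the substituted cyclotomic polynomials $\Phi_3(z^2)=\Phi_3(z)\Phi_6(z)=z^4+z^2+1$ and $\Phi_5(z^2)=\Phi_5(z)\Phi_{10}(z)=z^8+z^6+z^4+z^2+1$, and I expect (and would verify symbolically with \cite{Sage}, as in Lemma~\ref{lem:matrix}) that after reduction one coefficient is divisible by $\Phi_3(z^2)$ in the first case and by $\Phi_5(z^2)$ in the second; the bound $\deg A_p^{(\ell)}\le 2(p-1)$ is consistent with this, since reduction can raise the degree to $6$ and $10$ respectively. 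The argument is by contraposition: assuming $3\nmid q$ (resp.\ $5\nmid q$) forces $\xi_3,\xi_6$ (resp.\ $\xi_5,\xi_{10}$) to be non-poles of every $\vartheta^{(\ell)}$, so the evaluation is legitimate, and the equation whose coefficient vanishes there yields linear relations among the remaining values $\vartheta^{(\ell)}(\xi)$.

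The target of these relations is to force all differences $\vartheta^{(\ell)}$ to vanish identically, so that by Theorem~\ref{thm:charact[0,p]} the pair would be $p$-isospectral for all $p$. This contradicts genuine $\{2\}$- or $\{3\}$-isospectrality, which by definition requires the pair to fail $0$-isospectrality (for $\{2\}$ one may also note that $0$- and $2$-isospectrality would force $1$-isospectrality via the hole obstruction, Proposition~\ref{prop:hole-obstruction}). Thus the non-existence of such pairs when the prime fails to divide $q$ is precisely the divisibility asserted in Conjecture~\ref{conj:7-dim[2][3]-isosp}.

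The hard part is exactly the one flagged after Conjecture~\ref{conj:5-dim[2]-isosp}: passing from ``$\vartheta^{(\ell)}$ vanishes at a handful of roots of unity'' to ``$\vartheta^{(\ell)}\equiv0$''. A single cyclotomic factor only produces vanishing on the Galois orbit of $\xi_3$ (or $\xi_5$), which is far from enough to annihilate a rational function whose numerator can have degree as large as $(4-\ell)(q-1)$. To close the gap I would combine these evaluations with the second reduced equation, keep track of the pole orders of the $\vartheta^{(\ell)}$ at the genuine $q$-th roots of unity, and use the full set of conjugate evaluations simultaneously. The $\{3\}$ case looks strictly harder than the $\{2\}$ case: it carries one more free variable than the five-dimensional situation, and the relevant factor $\Phi_5(z^2)$ has degree $8$ and can only surface after a degree-raising reduction rather than already sitting inside a single $A_p^{(\ell)}$.
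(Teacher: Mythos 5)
The statement you are trying to prove is stated in the paper as Conjecture~\ref{conj:7-dim[2][3]-isosp}: the paper offers \emph{no proof} of it, only the computational evidence of \cite[Table 2]{Lauret-appendix} and, for the analogous $5$-dimensional Conjecture~\ref{conj:5-dim[2]-isosp}, a remark sketching a \emph{possible} proof that itself ends with ``this equality should give a contradiction.'' So there is no paper argument to match yours against; the question is whether your attempt closes the conjecture, and it does not. Your setup is faithful and correct as far as it goes: the indexing ($F^1$ uses $A_2^{(\ell)}$, $F^2$ uses $A_3^{(\ell)}$, $F^3$ uses $A_4^{(\ell)}$, consistent with Theorem~\ref{thm:F_L^p} expressing $F_L^{p-1}$), the observation $\vartheta^{(4)}\equiv 0$, the use of Proposition~\ref{prop:p-isospIkeda}, and the contrapositive structure (if $3\nmid q$, force all-$p$ isospectrality via Theorem~\ref{thm:charact[0,p]}, contradicting the definition of $\{2\}$-isospectrality) are all sound scaffolding, essentially transplanting the paper's $5$-dimensional heuristic to $n=4$.

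The genuine gaps are two, and you name them yourself without closing them. First, the claim that after row reduction of the $2\times 4$ system over $\C(z)$ some coefficient is divisible by $\Phi_3(z^2)$ (resp.\ $\Phi_5(z^2)$) is only an expectation ``to be verified with Sage''; unlike the $5$-dimensional case, where the paper explicitly computes the system and the factor $z^4+1$ visibly survives one elimination step, you never write down the reduced equations for $n=4$, so the crucial cyclotomic factor is hypothesized, not established. Second, and more fundamentally, even granting that factor, evaluation at the Galois orbit of $\xi_3$ (or $\xi_5$) yields only finitely many linear relations among the values $\vartheta^{(\ell)}(\xi)$, whereas your target is $\vartheta^{(\ell)}\equiv 0$ for all $\ell$. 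With two equations and \emph{four} unknown rational functions --- note that $\vartheta^{(3)}$ need not vanish here, since Lemma~\ref{lem:theta^n-1} requires the lens-space hypothesis and the conjecture concerns orbifolds --- the system is genuinely underdetermined, and no finite set of evaluations can force identical vanishing of numerators whose degrees grow like $(4-\ell)(q-1)$. This is exactly the step at which the paper's own $5$-dimensional sketch also stops, and your proposal inherits that open end rather than resolving it; as written it is a plausible research plan consistent with the paper's framework, but not a proof, and the statement remains a conjecture.
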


The higher dimensional cases are more complicated since the number of examples increase a lot.

\section{Conclusions}

Although lens spaces and lens orbifolds have provided several exotic isospectral examples, they are far away to answer in a positive way the question of R.~Miatello and J.P.~Rossetti~\cite[page 666]{MR-p-iso} (see also \cite[Problem~8.23]{CraioveanuPutaRassias-book}): 
\begin{quote}
	``Whether any possible combination of $p$-isospectrality is possible in a fixed dimension''.
\end{quote}
In other words, ``Is yes the answer of Question~\ref{question1} for every subset $I$?''.

According to Table~\ref{table:numbersubsets}, the major impediment is the well known hole obstruction (Proposition~\ref{prop:hole-obstruction}).  
Furthermore, there are some barriers from isolated results like Theorem~\ref{thm:[0,n-2]-isosp}, Theorem~\ref{thm:n-2elements}, and probably others explaining the not known cases. 

It would be desirable to have an explanation on the existence of $I$-isospectrality for each case $I$ in the rows `$\nexists$ ?'  in Tables~\ref{table:summarydim5}--\ref{table:summarydim11}.
More precisely, for each of those $I$, to show an $I$-isospectral pair, or to give a proof for its non-existence. 
Furthermore, it would be important to know if Theorem~\ref{thm:L(q/d,s)} holds also for lens orbifolds. 
According to Remark~\ref{rem:L(q/d;s)orbifolds}, this would show that the spectral information of a lens orbifold determines the isotropy type of its singular points. 
We note that this is not true for arbitrary orbifolds by \cite{ShamsStanhopeWebb06}.

The author hopes that this computational study  motivates someone to consider any related problem.

\bibliographystyle{plain}

\begin{thebibliography}{LMR16ba}

\bibitem[App]{Lauret-appendix}
    {\sc E.A. Lauret}.
    {\it Appendix to ``A computational study on lens spaces isospectral on forms''}.
    \href{http://www.famaf.unc.edu.ar/~elauret/index_archivos/appendix-lens-p-iso.pdf} {Download} from the author web page (2017).

\bibitem[B\"a91]{Bar91thesis}
    {\sc C. B\"ar}.
    {\it Das Spektrum von Dirac-Operatoren}.
    Bonner Math. Schr. \textbf{217}, 1991.

\bibitem[B\"a96]{Bar96}
    {\sc C. B\"ar}.
    {\it The Dirac operator on space forms of positive curvature.}
    J. Math Soc. Japan \textbf{48}:1 (1996), 69--83.
    DOI: \href{http://dx.doi.org/10.2969/jmsj/04810069}{10.2969/jmsj/04810069}.

\bibitem[Bo15]{Boldt15}
	{\sc S. Boldt}.
	{\it Properties of the Dirac spectrum on three dimensional lens spaces.}
	Osaka J. Math., in press (2016). 	\href{http://arxiv.org/abs/1504.03121}{arXiv:1504.03121}.

\bibitem[BL17]{BoldtLauret-onenormDirac}
    {\sc S. Boldt, E.A. Lauret}.
    {\it An explicit formula for the Dirac multiplicities on lens spaces.}
    J. Geom. Anal. \textbf{27} (2017), 689--725.
    DOI: \href{http://dx.doi.org/10.1007/s12220-016-9695-x}{10.1007/s12220-016-9695-x}.

\bibitem[Co]{Cohen-book}
    {\sc M. Cohen}.
    A course in simple-homotopy theory.
    {\it Grad. Texts in Math.} \textbf{10}.
    Springer-Verlag, New York-Heidelberg-Berlin, 1970.

\bibitem[CPR]{CraioveanuPutaRassias-book}
	{\sc M. {Craioveanu}, M. {Puta}, T.M. {Rassias}}.
	{Old and new aspects in spectral geometry}.
	{\it Pure and Applied Mathematics} \textbf{19}.
	Dordrecht: Kluwer Academic Publishers, New York, 2001.
	
\bibitem[DD14]{DeFordDoyle14}
    {\sc D. DeFord, P. Doyle}.
    {\it Cyclic groups with the same Hodge series}.
    \href{http://arxiv.org/abs/1404.2574}{arXiv:1404.2574} (2014).

\bibitem[Go12]{Gordon12-orbifold}
    {\sc C. Gordon}.
    {\it Orbifolds and their spectra}.
    In \textsl{Spectral Geometry}, 49--71,
    Proc. Sympos. Pure Math. \textbf{84}, Amer. Math. Soc., Providence, RI, 2012.
    DOI: \href{http://dx.doi.org/10.1090/pspum/084}{10.1090/pspum/084}.

\bibitem[GR03]{GordonRossetti03}
    {\sc C. Gordon, J.P. Rossetti}.
    {\it Boundary volume and length spectra of Riemannian manifolds: what the middle degree Hodge spectrum doesn't reveal.}
    Ann. Inst. Fourier (Grenoble) \textbf{53}:7 (2003), 2297--2314.
    DOI: \href{http://dx.doi.org/10.5802/aif.2007}{10.5802/aif.2007}.

\bibitem[GM06]{GornetMcGowan06}
    {\sc R. Gornet, J. McGowan}.
    {\it Lens Spaces, isospectral on forms but not on functions}.
    LMS J. Comput. Math. \textbf{9} (2006), 270--286.
    DOI: \href{http://dx.doi.org/10.1112/S1461157000001273}{10.1112/S1461157000001273}.

\bibitem[Ik80]{Ikeda80_isosp-lens}
    {\sc A. Ikeda}.
    {\it On lens spaces which are isospectral but not isometric}.
    Ann. Sci. \'Ecole Norm. Sup. (4) \textbf{13}:3 (1980), 303--315.

\bibitem[Ik80]{Ikeda80_3-dimI}
    {\sc A. Ikeda}.
    {\it On the spectrum of a riemannian manifold of positive constant curvature.}
    Osaka J. Math. \textbf{17} (1980), 75--93.

\bibitem[Ik80]{Ikeda80_3-dimII}
    {\sc A. Ikeda}.
    {\it On the spectrum of a riemannian manifold of positive constant curvature II.}
    Osaka J. Math. \textbf{17} (1980), 691--762.

\bibitem[Ik83]{Ikeda83}
    {\sc A. Ikeda}.
    {\it On spherical space forms which are isospectral but not isometric}.
    J. Math. Soc. Japan \textbf{35}:3 (1983), 437--444.
    DOI: \href{http://dx.doi.org/10.2969/jmsj/03530437}{10.2969/jmsj/03530437}.

\bibitem[Ik88]{Ikeda88}
    {\sc A. Ikeda}.
    {\it Riemannian manifolds $p$-isospectral but not $p+1$-isospectral}.
    In \textsl{Geometry of manifolds ({M}atsumoto, 1988)}, 383--417,
    Perspect. Math. \textbf{8}, 1989.

\bibitem[Ik97]{Ikeda97}
    {\sc A. Ikeda}.
    {\it On space forms of real Grassmann manifolds which are isospectral but not isometric}.
    Kodai Math. J. \textbf{20}:1 (1997), 1--7.
    DOI: \href{http://dx.doi.org/10.2996/kmj/1138043715}{10.2996/kmj/1138043715}.

\bibitem[IT78]{IkedaTaniguchi78}
    {\sc A. Ikeda, Y. Taniguchi}.
    {\it Spectra and eigenforms of the Laplacian on $S^n$ and $P^n(\mathbb C)$}.
    Osaka J. Math. \textbf{15}:3 (1978), 515--546.

\bibitem[IY79]{IkedaYamamoto79}
    {\sc A. Ikeda, Y. Yamamoto}.
    {\it On the spectra of 3-dimensional lens spaces}.
    Osaka J. Math. \textbf{16}:2 (1979), 447--469.

\bibitem[La16a]{Lauret-spec0cyclic}
    {\sc E.A. Lauret}.
    {\it Spectra of orbifolds with cyclic fundamental groups}.
    Ann. Global Anal. Geom. \textbf{50}:1 (2016), 1--28.
    DOI: \href{http://dx.doi.org/10.1007/s10455-016-9498-0}{10.1007/s10455-016-9498-0}.

\bibitem[La16b]{Lauret-multip}
    {\sc E.A. Lauret}.
    {\it The spectrum on $p$-forms of a lens space}.
    \href{http://arxiv.org/abs/1604.02471}{arXiv:1604.02471} (2016).

\bibitem[LMR15]{LMR-repequiv}
    {\sc E.A. Lauret, R.J. Miatello, J.P. Rossetti}.
    {\it Representation equivalence and p-spectrum of constant curvature space forms}.
    J. Geom. Anal. \textbf{25}:1 (2015), 564--591.
    DOI: \href{http://dx.doi.org/10.1007/s12220-013-9439-0}{10.1007/s12220-013-9439-0}.

\bibitem[LMR16a]{LMR-onenorm}
    {\sc E.A. Lauret, R.J. Miatello, J.P. Rossetti}.
    {\it Spectra of lens spaces from 1-norm spectra of congruence lattices.}
    Int. Math. Res. Not. IMRN \textbf{2016}:4 (2016), 1054--1089.
    DOI: \href{http://dx.doi.org/10.1093/imrn/rnv159}{10.1093/imrn/rnv159}.

\bibitem[LMR16b]{LMR-survey}
    {\sc E.A. Lauret, R.J. Miatello, J.P. Rossetti}.
    {\it Non-strongly isospectral spherical space forms.}
    In \textsl{Mathematical Congress of the Americas},
    Contemp. Math. \textbf{656}, Amer. Math. Soc., Providence, RI, 2016.
    DOI: \href{http://dx.doi.org/10.1090/conm/656/13104}{10.1090/conm/656/13104}.

\bibitem[MR01]{MR-p-iso}
	{\sc R.J. Miatello, J.P. Rossetti}.
	{\it Flat manifolds isospectral on $p$-forms.}
	Jour. Geom. Anal. \textbf{11}:4 (2001), 649--667.
	DOI: \href{http://dx.doi.org/10.1007/BF02930761}{10.1007/BF02930761}.

\bibitem[MH16a]{MohadesHonori16}
    {\sc H. Mohades, B. Honari}.
    {\it On a relation between spectral theory of lens spaces and Ehrhart theory}.
    \href{http://arxiv.org/abs/1601.04256}{arXiv:1601.04256} (2016).

\bibitem[MH16]{MohadesHonori16b}
    {\sc H. Mohades, B. Honari}.
    {\it Harmonic-counting measures and spectral theory of lens spaces}.
    C. R. Math. Acad. Sci. Paris \textbf{354}:12 (2016),  1145--1150.
    DOI: \href{http://dx.doi.org/10.1016/j.crma.2016.10.016}{10.1016/j.crma.2016.10.016}.

\bibitem[Sage]{Sage}
    {\sc Stein W. A. et al.}
    {\it Sage Mathematics Software (Version 4.3)}.
    The Sage Development Team, 2009, \href{http://www.sagemath.org}{www.sagemath.org}.

\bibitem[Sh11]{Shams11}
	{\sc N. Shams Ul Bari}.
	{\it Orbifold lens spaces that are isospectral but not isometric.}
	Osaka J. Math \textbf{48}:1 (2011), 1--40.

\bibitem[SH17]{ShamsHunsicker17}
	{\sc N. Shams Ul Bari, E. Hunsicker}.
	{\it Isospectrality for orbifolds lens spaces.}
	\href{http://arxiv.org/abs/1705.01412}{arXiv:1705.01412} (2017). 
	

\bibitem[SSW06]{ShamsStanhopeWebb06}
	{\sc N. Shams, E. Stanhope, D. Webb}.
	{\it One cannot hear orbifold isotropy type.}
	Arch. Math. (Basel) \textbf{87}:4 (2006), 375--385.
	DOI: \href{http://dx.doi.org/10.1007/s00013-006-1748-0}{10.1007/s00013-006-1748-0}.

\bibitem[Ya80]{Yamamoto80}
    {\sc Y. Yamamoto}.
    {\it On the number of lattice points in the square $x+y\leq u$ with a certain congruence condition.}
    Osaka J. Math. \textbf{17}:1 (1980), 9--21.

\end{thebibliography}

\end{document}